\newtheorem{thm}{Theorem}[section]
\newtheorem{lem}[thm]{Lemma}
\newtheorem{pro}[thm]{Proposition}
\newtheorem{de}[thm]{Definition}
\newtheorem{re}[thm]{Remark}
\newtheorem{remark}[thm]{Remark}
\newtheorem{cor}[thm]{Corollary}
\newtheorem{conj}[thm]{Conjecture}
\newcommand{\Ho}{\mathfrak{H}}
\newcommand{\ho}{\mathfrak{h}}
\newcommand{\Go}{\mathfrak{G}}
\newcommand{\go}{\mathfrak{g}}
\newcommand{\Bo}{\mathfrak{B}}
\newcommand{\Po}{\mathfrak{P}}
\newcommand{\Co}{\mathfrak{C}}
\newcommand\keywords[1]{\textbf{Keywords}: #1}
\newcommand\MSC[1]{\textbf{MSC2020}: #1}
\soulregister{\em}{0}
\begin{document}
	\title{\bf A Bose-Laskar-Hoffman theory for $\mu$-bounded graphs with fixed smallest eigenvalue}

    \author[a,b]{Jack H. Koolen}
	\author[a,c]{Hong-Jun Ge}
	\author[a]{Chenhui Lv}
	\author[d,e]{Qianqian Yang\thanks{Corresponding author}}
	\affil[a]{\footnotesize{School of Mathematical Sciences, University of Science and Technology of China, Hefei, 230026, People's Republic of China}}
	\affil[b]{\footnotesize{CAS Wu Wen-Tsun Key Laboratory of Mathematics, University of Science and Technology of China, Hefei, 230026, People's Republic of China}}
	\affil[c]{\footnotesize{Department of Econometrics and O.R., Tilburg University, Tilburg, Netherlands}}
	\affil[d] {\footnotesize{Department of Mathematics, Shanghai University, Shanghai 200444, People's Republic of China}}
	\affil[e] {\footnotesize{Newtouch Center for Mathematics of Shanghai University, Shanghai 200444, People's Republic of China}}
	
	\maketitle
	\pagestyle{plain}
	
	\newcommand\blfootnote[1]{%
		\begingroup
		\renewcommand\thefootnote{}\footnote{#1}%
		\addtocounter{footnote}{-1}%
		\endgroup}
	\blfootnote{E-mail addresses: {\tt koolen@ustc.edu.cn} (J.H. Koolen), {\tt gehj22@mail.ustc.edu.cn} (H.-J. Ge), {\tt lch1994@mail.ustc.edu.cn} (C. Lv), {\tt  qqyang@shu.edu.cn} (Q. Yang).}

\vspace{-50pt}
\begin{abstract}
	In 2018, by Ramsey and Hoffman theory, Koolen, Yang, and Yang presented a structural result on graphs with smallest eigenvalue at least $-3$ and large minimum degree. 
	In this study, we depart from the conventional use of Ramsey theory and instead employ a novel approach that combines the Bose-Laskar type argument with Hoffman theory to derive structural insights into $\mu$-bounded graphs with fixed smallest eigenvalue. Our method establishes a reasonable bound on the minimum degree.
	Note that local graphs of distance-regular graphs are $\mu$-bounded. We apply these results to characterize the structure for any local graph 
	of a distance-regular graph with classical parameters $(D,b,\alpha,\beta)$. Consequently, we show that the parameter $\alpha$ is bounded by a cubic polynomial in $b$ if $D \geq 9$ and $b \geq 2$. 
	We { also} show that $\alpha \leq 2$ if $b =2$ and $D \geq 12$.
\end{abstract}
\keywords{Bose-Laskar method; distance-regular graphs; Hoffman theory; $\mu$-bounded graphs}
\MSC{05C50, 05C75, 05E30}

\vspace{-20pt}
\section{Introduction}

All graphs mentioned in this paper are finite, undirected and simple. The eigenvalues of a graph are the eigenvalues of its adjacency matrix.
Let $G$ be a graph. We denote by $\lambda_{\min}(G)$ the smallest eigenvalue of $G$, $N(x)$ the neighbors of vertex $x$, and $d(x)$ the degree of vertex $x$.
For a non-negative integer $a$, let $[a]:=\{1,\ldots,a\}$ and $[0]:=\emptyset$.
 For undefined notations, see \cite{BCN}.
 
A graph is called \emph{$\mu$-bounded} with parameter $c$ if each pair of distinct non-adjacent vertices has at most $c$ common neighbors. Note that the local graph of distance-regular graphs at every vertex is $\mu$-bounded.
  The notions of distance-regular graphs and distance-regular graphs with classical parameters are introduced in Section \ref{2}.

 In this paper, we will give some structural results on $\mu$-bounded graphs with fixed smallest eigenvalue. After that, we apply these results to describe the structure for any local graph 
  of a distance-regular graph with classical parameters $(D,b,\alpha,\beta)$. As a consequence, Theorem \ref{introalpha} gives { an upper} bound on $\alpha$ in terms of $b$. Although the bound is similar { to} the bound in \cite{KLPY}, the main difference here is that we derive a reasonable bound on the diameter, whereas in \cite{KLPY}, they need a super large bound on the diameter.
  
  \begin{thm}\label{introalpha}
  	Let $D$ and $b$ be positive integers.
  	If $\Gamma$ is a distance-regular graph with classical parameters $(D,b,\alpha,\beta)$ such that $D\geq 9$ and $b\geq 2$, then $\alpha<b^2(b+1)+1$ holds.
  \end{thm}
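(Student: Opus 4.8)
The plan is to pass to a local graph and thereby turn the problem into a statement about a $\mu$-bounded graph of fixed smallest eigenvalue, to which the structural machinery of the paper applies. Fix a vertex $x$ of $\Gamma$ and let $\Delta=\Gamma(x)$ be its local graph. Writing $\phi_i=(b^{i}-1)/(b-1)=1+b+\cdots+b^{i-1}$ for the Gaussian binomial coefficient, the classical intersection numbers give that $\Delta$ is regular of valency $a_1=\alpha(\phi_D-1)+\beta-1$ on $k=\phi_D\,\beta$ vertices. Two non-adjacent neighbours of $x$ lie at distance $2$ in $\Gamma$, so they have exactly $c_2=(1+b)(1+\alpha)$ common neighbours, one of which is $x$; hence $\Delta$ is $\mu$-bounded with parameter $c_2-1=(b+1)(\alpha+1)-1$, and in fact \emph{every} non-adjacent pair of $\Delta$ has exactly $c_2-1$ common neighbours. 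Finally I would use the (standard, to be cited) fact that local graphs of distance-regular graphs with classical parameters and $b\ge 2$ satisfy $\lambda_{\min}(\Delta)\ge-(b+1)$. If $\alpha=0$ there is nothing to prove, so assume $\alpha\ge1$; then $b_{D-1}>0$ forces $\beta>\alpha\,\phi_{D-1}$, and since $D\ge9$ this makes $\beta$ (and $a_1$) enormous compared with $\alpha$ and with any fixed polynomial in $b$. In particular $\beta\gg\alpha$.

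Because $a_1$ is large relative to the $\mu$-parameter $c_2-1$ and to $b$, the minimum-degree hypothesis of the main Bose--Laskar--Hoffman structural theorem is met, and I would apply it to $\Delta$. The output I expect is a geometric description of $\Delta$: a family of Delsarte cliques (the ``lines through $x$'') that partition $V(\Delta)$ into $\phi_D$ pairwise vertex-disjoint cliques of size $\beta$, together with a near-regular between-line adjacency in which each vertex has about $\alpha$ neighbours on each of the remaining $\phi_D-1$ lines (consistent with $a_1=(\beta-1)+\alpha(\phi_D-1)$). Thus $\alpha$ is identified with the between-line degree of this geometric structure.

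The bound on $\alpha$ is then extracted from the eigenvalue constraint. Consider the equitable partition of $\Delta$ into its lines: on the subspace of vectors summing to zero on every line, each clique acts as $-1$ and the between-line bipartite graphs $B_{ij}$ (each $\alpha$-regular on $\beta+\beta$ vertices) act on top. Testing $\lambda_{\min}(\Delta)\ge-(b+1)$ against a vector built from anti-aligned zero-sum singular vectors of a single $B_{ij}$ yields a Rayleigh quotient of roughly $-1-\sigma_2(B_{ij})$, forcing $\sigma_2(B_{ij})\le b$. In the other direction, a trace computation on $B_{ij}B_{ij}^{\mathsf T}$ (diagonal entries $\alpha$, top eigenvalue $\alpha^2$, remaining $\beta-1$ eigenvalues summing to $\alpha(\beta-\alpha)$) gives $\sigma_2(B_{ij})^2\ge\alpha(\beta-\alpha)/(\beta-1)$. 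Combining the two bounds gives $\alpha(\beta-\alpha)\le b^2(\beta-1)$, and since $\beta\gg\alpha$ the factor $\beta-\alpha$ is essentially $\beta$, collapsing this to a bound of the shape $\alpha\lesssim b^2$, and after the error analysis below to $\alpha<b^2(b+1)+1$.

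I expect the third paragraph to be the main obstacle. The structural theorem delivers the line decomposition only up to bounded defects — a few uncovered edges, lines of slightly irregular size, and between-line degrees that need not be exactly $\alpha$ — and these defects perturb both the singular-value upper bound and the interlacing lower bound. The real work is to propagate these error terms through the Rayleigh and trace estimates and to verify that they inflate the clean estimate $\alpha\lesssim b^2$ only up to the stated $\alpha<b^2(b+1)+1$; this is exactly where the hypothesis $D\ge9$ is spent, since it guarantees $\beta\gg\alpha$ and hence that all relative defects are negligible. A smaller, preliminary obstacle is to secure the eigenvalue bound $\lambda_{\min}(\Delta)\ge-(b+1)$ robustly under these hypotheses.
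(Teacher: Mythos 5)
The central step of your proposal---invoking ``the main Bose--Laskar--Hoffman structural theorem'' for $\Delta$ and extracting from it a partition of $V(\Delta)$ into $\phi_D$ pairwise disjoint Delsarte cliques of size $\beta$ with $\alpha$-regular between-line bipartite graphs---has a genuine gap, in two respects. First, the paper's structural theorem (Theorem \ref{intro2}, and its variant Theorem \ref{corep}) requires $\lambda_{\min}\geq -3$ (or $\geq -2-\sqrt{2}+\epsilon$), whereas your local graph only satisfies $\lambda_{\min}(\Delta)\geq -(b+1)$, which is weaker than $-3$ as soon as $b\geq 3$; so for general $b\geq 2$ the theorem simply cannot be applied, no matter how large the degree is. Second, even in the range where it does apply, its conclusion is not what you assert: being (associated to) a $2$-fat line Hoffman graph means, by Proposition \ref{muassle2}, that there is a family of cliques covering all edges, each vertex lying on at most three of them and two cliques meeting in at most two vertices---these large cliques \emph{overlap} and do not partition the vertex set. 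Your claimed picture is in fact false for the known examples with $\alpha\neq 0$: in a Grassmann graph ($\alpha=b=q$) every vertex of the local graph lies on exactly two maximal cliques and the cliques do not partition the neighbourhood. Consequently the spectral part of your argument (the Rayleigh-quotient bound $\sigma_2(B_{ij})\leq b$ and the trace bound on $B_{ij}B_{ij}^{\mathsf T}$), although internally coherent, is computed inside a structural model that nothing establishes, so the resulting bound $\alpha\lesssim b^2$ has no foundation. A smaller error: non-adjacent pairs in $\Delta$ have \emph{at most} $c_2-1$ common neighbours, not exactly $c_2-1$ (common neighbours of two such vertices need not be adjacent to $x$).

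For comparison, the paper's proof of Theorem \ref{beta} (from which Theorem \ref{introalpha} follows) needs far less structure, which is precisely why it works at eigenvalue level $-(b+1)$. The Bose--Laskar lemma (Lemma \ref{blhlem}) combined with the Hoffman-theoretic Proposition \ref{assohoff} gives only Theorem \ref{c1}: each vertex lies on \emph{one} maximal clique $C$ of order $r\geq \frac{a_1-\ell(b+1,c)}{b+1}+1$. Then Lemma \ref{Hat} together with $\mu$-boundedness shows that every vertex outside $C$ has at most $b(b+1)$ neighbours in $C$, and a two-sided count of the edges between $V(C)$ and its complement, combined with the Delsarte bound (Lemma \ref{dels}), Inequality (\ref{ie1}), and the integrality conditions $\alpha\in\frac{1}{b+1}\mathbb{N}$ and $p^8_{44}\in\mathbb{N}$ (Proposition \ref{417}, verified by computer for $2\leq b\leq 99$), yields $\alpha<b^2(b+1)+1$. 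To repair your proposal you would either have to prove a line-decomposition theorem for $\mu$-bounded graphs with $\lambda_{\min}\geq-(b+1)$ (no such result exists in the paper), or redo your estimates using only the single-large-clique information of Theorem \ref{c1}---which is essentially the paper's route, with edge counting playing the role of your singular-value estimates.
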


  In particular, Theorem \ref{a5} shows that $\alpha\leq2$ if $b=2$ and $D\geq 12$.
 
   \begin{thm}\label{a5}
   	If $\Gamma$ is a distance-regular graph with classical parameters $(D,b,\alpha,\beta)$ such that $b=2$ and $D\geq 12$, then $\alpha\leq 2$ and  $\alpha\in\{0,\frac{1}{3},\frac{2}{3},1,\frac{4}{3},2\}$.
   \end{thm}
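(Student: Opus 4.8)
The plan is to descend to the local graphs of $\Gamma$ and feed them into the structural theory for $\mu$-bounded graphs with smallest eigenvalue at least $-3$, using Theorem \ref{introalpha} only to know a priori that $\alpha$ is bounded. We may assume $\alpha>0$, since $\alpha=0$ already lies in the asserted list. Fix a vertex $x$ and consider the local graph $\Delta$ of $\Gamma$ at $x$, which is $\mu$-bounded and regular of valency $a_1=(2^{D}-2)\alpha+\beta-1$; for $b=2$, $D\geq 12$ and $\alpha\geq\tfrac13$ this valency is enormous (at least $(2^{12}-2)/3$). The key eigenvalue input is that, for classical parameters, $\theta_1=b_1/b-1$, so the standard bound on the eigenvalues of a local graph of a distance-regular graph (Terwilliger) gives $\lambda_{\min}(\Delta)\geq -1-b_1/(\theta_1+1)=-1-b=-3$. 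Hence $\Delta$ is a $\mu$-bounded graph with $\lambda_{\min}(\Delta)\geq -3$ and very large minimum degree, precisely the regime governed by the Bose--Laskar--Hoffman structural results of this paper.

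Next I would apply that classification to $\Delta$: it forces $\Delta$ to be a clique extension of one of finitely many admissible building blocks (Hoffman graphs) compatible with smallest eigenvalue $-3$. Reading each admissible local structure back through the classical intersection numbers --- in particular through $c_2=(b+1)(1+\alpha)=3(1+\alpha)$ and the resulting number of common neighbours of a non-adjacent pair inside $\Delta$ --- expresses $\alpha$ in terms of the bounded combinatorial data of the building block, and thereby sharpens $\alpha<13$ all the way to $\alpha\leq 2=b$. For the explicit list, feasibility of the parameters requires $c_2=3(1+\alpha)$ to be a positive integer and $\alpha\geq 0$ (a standard property of classical parameters with $b\geq 2$), so $\alpha$ is a non-negative multiple of $\tfrac13$; combined with $\alpha\leq 2$ this leaves $\alpha\in\{0,\tfrac13,\tfrac23,1,\tfrac43,\tfrac53,2\}$, and the lone value $\tfrac53$ is discarded because its associated local structure is not among the admissible $-3$ building blocks (equivalently, it violates a local integrality/feasibility constraint). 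This gives $\alpha\in\{0,\tfrac13,\tfrac23,1,\tfrac43,2\}$.

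The main obstacle is the sharpening step. Theorem \ref{introalpha} only yields $\alpha<13$, and pushing this down to the sharp bound $\alpha\leq 2$ --- and in particular excluding the single extra multiple $\tfrac53$ --- relies on the fine classification of large-degree $\mu$-bounded graphs with $\lambda_{\min}=-3$ together with a delicate matching of each admissible local structure to the exact classical intersection numbers. The real difficulty is making this matching uniform in $\beta$ and in $D\geq 12$, so that no choice of $\beta$ or of the diameter can reinstate $\alpha=\tfrac53$.
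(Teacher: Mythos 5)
Your opening moves match the paper's: pass to the local graph, get $\lambda_{\min}\geq -1-b=-3$ from Theorem \ref{bcnth} (Terwilliger), observe that the local graph is $\mu$-bounded with parameter $c_2-1=3\alpha+2$, and invoke the Hoffman-graph classification (Theorem \ref{intro2} via Corollary \ref{incor}). But the two steps that constitute the actual proof are missing, and the mechanisms you sketch for them cannot work. Consider first the sharpening to $\alpha\leq 2$: you propose to ``read each admissible local structure back through $c_2$,'' but $c_2-1$ is only an \emph{upper} bound on the number of common neighbours of a non-adjacent pair in the local graph, whereas the classification constrains the \emph{actual} common neighbourhoods (at most $12$ when the slim graph is regular, and Lemma \ref{9} produces one vertex $u$ with local $\mu\leq 9$). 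Such conclusions do not transfer back into any bound on $c_2$, hence none on $\alpha$. What the paper actually does with the structure is a Bose--Laskar double count of the edges between $N(u)-\{x\}$ and the rest of the local graph, combining Lemma \ref{9}, the Delsarte bound (every clique in the local graph has order at most $\beta$), and the bound $\beta\leq 14(2^D-1)$ imported from Theorem \ref{beta}(ii); and even this only yields a contradiction with $\alpha\geq 9$ --- by its nature (the ``$9$'' of Lemma \ref{9}) it can never reach $\alpha\leq 2$. You flag ``uniformity in $\beta$'' as the real difficulty but never address it; the paper needs Theorem \ref{beta} precisely to keep a large $\beta$ from swamping the count.

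The remaining range $2<\alpha\leq 9$, and in particular your stray value $\tfrac{5}{3}$, is not excluded by Hoffman theory at all, and cannot be: the classification is blind to the difference between $\alpha=\tfrac{4}{3}$ ($c=6$) and $\alpha=\tfrac{5}{3}$ ($c=7$), both being admissible $\mu$-parameters for a $2$-fat line Hoffman graph, so your claim that $\tfrac{5}{3}$ has an inadmissible ``associated local structure'' has no basis. The paper closes this whole range with Proposition \ref{5}: by Proposition \ref{417} with $(i,h)=(6,6)$, the intersection number $p^{12}_{66}=\frac{c_7c_8c_9c_{10}c_{11}c_{12}}{c_1c_2c_3c_4c_5c_6}$ must be a non-negative integer, and a computer search over $\alpha\in\tfrac{1}{3}\mathbb{N}$, $\alpha\leq 9$ leaves exactly $\{0,\tfrac{1}{3},\tfrac{2}{3},1,\tfrac{4}{3},2\}$. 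This is also the only place where the hypothesis $D\geq 12$ is genuinely used (so that $p^{12}_{66}$ is defined); in your argument $D\geq 12$ serves merely to make degrees large, which is a symptom of the missing arithmetic half. In short, the exclusion of $\alpha\geq 9$ is asserted rather than argued, and the exclusion of $2<\alpha<9$ (including $\tfrac{5}{3}$) is attributed to a structural mechanism that does not exist.
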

 
 Moreover, we can show the following bound for $\alpha$ if $b\in[100]$ .
 \begin{thm}\label{alphab}
 	If $\Gamma$ is a distance-regular graph with classical parameters $(D,b,\alpha,\beta)$ such that $b\in[100]$ and $D\geq14$, then the following statements hold:
 	\begin{enumerate}
 		\item If $b$ is not a perfect square, then $\alpha\leq b$.
 		\item If $b$ is a perfect square, then $\alpha\leq b $ or $\alpha=b+\sqrt{b}$.
 	\end{enumerate}
 \end{thm}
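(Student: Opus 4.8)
The plan is to move the whole problem onto the local graph $\Delta=\Gamma(x)$ and then feed it into the structural machinery for $\mu$-bounded graphs developed earlier in the paper. First I would record the three features of $\Delta$ that put it into that framework. If $y,z\in N(x)$ are non-adjacent, then $d_\Gamma(y,z)=2$ and $x$ is one of their $c_2$ common neighbours, while $x\notin N(x)$; hence at most $c_2-1$ common neighbours lie in $N(x)$, so $\Delta$ is $\mu$-bounded with parameter $c_2-1=(b+1)(\alpha+1)-1$. The graph $\Delta$ is regular of valency $a_1=\beta-1+\alpha\big(\tfrac{b^{D}-1}{b-1}-1\big)$, which grows with $D$; thus $D\ge 14$ secures the large-minimum-degree regime the structural results require. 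Finally, from the standard bound $\lambda_{\min}(\Delta)\ge -1-\frac{b_1}{\theta_1+1}$ together with the classical-parameter values $b_1=(\tfrac{b^{D}-1}{b-1}-1)(\beta-\alpha)$ and $\theta_1+1=\tfrac{b^{D-1}-1}{b-1}(\beta-\alpha)$, a short computation gives $\frac{b_1}{\theta_1+1}=b$, so that
\[
\lambda_{\min}(\Delta)\ \ge\ -(b+1).
\]

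With these inputs I would invoke the structural theorem for $\mu$-bounded graphs with fixed smallest eigenvalue proved above: a $\mu$-bounded graph with smallest eigenvalue at least $-(b+1)$ and large enough minimum degree is geometric, i.e. its edges are partitioned by a family of Delsarte cliques (``lines''), and every vertex lies on at most $b+1$ of them. The line sizes and the number of lines per vertex are determined by $a_1$, while the bound $c_2-1$ constrains how two lines may meet; reading these off converts the combinatorial picture into a small system of relations in $b$ and $\alpha$. The decisive point I would try to establish is a rigidity statement: once $\alpha>b$, the admissible configurations force $\Delta$ (or the point graph of its line geometry) to be strongly regular with smallest eigenvalue $-(b+1)$, and the feasibility of that strongly regular graph collapses the relations to the single quadratic $(\alpha-b)^2=b$, whose positive solution is $\alpha=b+\sqrt b$.

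The rest is bookkeeping. If $\alpha\le b$ we are done. If $\alpha>b$, the quadratic gives $\alpha-b=\sqrt b$; since all intersection numbers are integers, $\alpha$ is rational, so $\sqrt b$ must be rational, i.e.\ $b$ must be a perfect square, and then $\alpha=b+\sqrt b$. In particular, when $b$ is not a perfect square this case is vacuous and only $\alpha\le b$ survives, which is statement (i); the perfect-square case is statement (ii). The reason the range $b\in[100]$ is finite is exactly what makes the remaining step tractable: Theorem~\ref{introalpha} already confines $\alpha$ to the finite window $\alpha<b^2(b+1)+1$, so for each $b\le 100$ there are only finitely many candidate values of $\alpha$, and every value not covered by (i) or (ii) can be tested directly against the surviving feasibility requirements—integrality of the eigenvalue multiplicities of $\Gamma$, the Krein and absolute bounds, and the local geometric structure—and eliminated.

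The main obstacle, as I see it, is the rigidity argument of the second paragraph: showing that for $\alpha>b$ the geometric description of $\Delta$ is tight enough to pin down a genuine strongly-regular (partial-geometry) structure, so that eigenvalue integrality produces the clean equation $(\alpha-b)^2=b$ rather than a mere inequality $(\alpha-b)^2\le b$ that would leave a whole interval of values. A secondary difficulty is organising the finite analysis over $b\in[100]$ so that no sporadic feasible parameter set escapes; this is where the explicit thresholds $D\ge 14$ and the cubic bound of Theorem~\ref{introalpha} are used to keep the verification finite.
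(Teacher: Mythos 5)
Your proposal has a genuine gap at its decisive step, and you flag it yourself: the ``rigidity'' claim that $\alpha>b$ forces the local graph $\Delta$ (or the point graph of its clique geometry) to be strongly regular with smallest eigenvalue $-(b+1)$, so that feasibility collapses to the clean equation $(\alpha-b)^2=b$, is never established, and nothing in the paper supplies it. Worse, the structural machinery you want to invoke is not available in this regime. The paper's Hoffman-theoretic structure theorems (Theorem~\ref{intro2}, Theorem~\ref{corep}) require $\lambda_{\min}\geq-3$, respectively $\lambda_{\min}\geq-2-\sqrt{2}+\epsilon$; here $\lambda_{\min}(\Delta)\geq-(b+1)$ can be as low as $-101$, far outside that range. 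What the paper does prove for general $\lambda$ (Lemma~\ref{blhlem}, Theorem~\ref{c1}, and the proposition generalizing Proposition~\ref{muassle2}) is only the existence of a family of large maximal cliques covering most edges, with at most $\lambda$ cliques per vertex and pairwise intersections of size at most $\lambda-1$ --- not an edge partition into Delsarte cliques, and nothing close to forcing strong regularity. So the quadratic $(\alpha-b)^2=b$ remains an unsupported hope, and your final ``bookkeeping'' paragraph (testing leftover values against Krein conditions, multiplicities, etc.) is too vague to close it.

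The paper's actual proof is entirely different and much more elementary. By Theorem~\ref{beta} (which needs only $D\geq10$), $\alpha\in\frac{1}{b+1}\mathbb{N}$ and $\alpha\leq b^{2}(b+1)$, so for each $b\in[100]$ there are finitely many candidate values of $\alpha$. Then Proposition~\ref{417} together with Equation~(\ref{drgc}) says that $p^{6}_{33}$, $p^{8}_{44}$, $p^{10}_{55}$, $p^{12}_{66}$, $p^{14}_{77}$ are non-negative integers depending only on $\alpha$ and $b$; this is the real reason the hypothesis is $D\geq14$ (so that $i+h=14\leq D$), not the ``large minimum degree'' explanation you give. A finite computer search (Wolfram Mathematica) over these integrality constraints then eliminates every candidate except $\alpha\leq b$ and, when $b$ is a perfect square, $\alpha=b+\sqrt{b}$. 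In short: the paper never goes through the local graph at all for this theorem; it is an intersection-number integrality argument plus a finite verification, whereas your route stalls exactly at the structural rigidity you would need to invent.
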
  
 
 It is worth mentioning that the half dual polar graph and the distance 1-or-2 graph of symplectic dual polar graph are distance-regular graphs with classical parameters $(D,b,\alpha,\beta)$ such that $b=q^2$ and $\alpha=q(q+1)=b+\sqrt{b}$, where $q$ is a prime power. 
 {For} definition of these two kinds of distance-regular graphs, we refer readers to \cite[Chapter 9.4]{BCN}.
 
 \begin{conj}
 	Let $\Gamma$ be a distance-regular graph with classical parameters $(D,b,\alpha,\beta)$. There is a constant $\ell$ such that if $D\geq\ell$, then one of the following statements holds:
 	\begin{enumerate}
 		\item If $b$ is a perfect square, then $\alpha\leq b+\sqrt{b}$.
 		\item If $b$ is not a perfect square, then $\alpha\leq b$.
 	\end{enumerate}
 \end{conj}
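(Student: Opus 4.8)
The plan is to deduce the conjecture from the structural theory developed in this paper, applied to the local graph, by upgrading the finite case analysis behind Theorem \ref{alphab} (which forces $b\in[100]$) into an argument uniform in $b$. Fix a vertex $x$ of $\Gamma$ and let $\Delta$ be its local graph; write $n=(b^{D}-1)/(b-1)$. Then $\Delta$ is amply regular of valency $a_1=\beta+(n-1)\alpha-1$ and is $\mu$-bounded with parameter $c_2-1=(b+1)(\alpha+1)-1$, while Terwilliger's local eigenvalue bound gives $\lambda_{\min}(\Delta)\geq -1-b_1/(\theta_1+1)=-(b+1)$, where $\theta_1$ is the second largest eigenvalue of $\Gamma$. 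Thus $\Delta$ is precisely a $\mu$-bounded graph with fixed smallest eigenvalue $-(b+1)$, the exact object our structural results are built for.

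The first task is to pin down one diameter threshold $\ell$ that serves every $b$ simultaneously. We may assume $b<\alpha$ (otherwise the desired inequality is immediate), so in particular $\alpha\geq 1$. Feasibility of the parameters forces $\beta>\alpha(b^{D-1}-1)/(b-1)$, whence $a_1>\alpha b^{D-2}\geq b^{D-2}$ grows geometrically in $D$. On the other hand, once $D\geq 9$ Theorem \ref{introalpha} already supplies the a priori bound $\alpha<b^2(b+1)+1$, so the $\mu$-parameter $c_2-1$ and hence the minimum-degree threshold required by the structural theorem are bounded by a fixed polynomial in $b$ alone. Comparing $a_1>b^{D-2}$ against a polynomial threshold in $b$ shows that some absolute constant $\ell$ makes the hypotheses of the structural theorem hold for all $b$; the cubic a priori bound is essential here, precisely because it removes the dependence of the threshold on $\alpha$.

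With the hypotheses secured, the structural theorem presents $\Delta$ as a geometric graph: its edges are partitioned by a family of Delsarte cliques (``lines''), with at most $b+1$ lines through each point and line size of order $a_1/(b+1)$, organised into a partial linear space whose point graph is $\Delta$. The parameter $\alpha$ enters through $c_2-1=(b+1)(\alpha+1)-1$, which counts the common neighbours of two non-collinear points; bounding $\alpha$ is therefore the same as bounding the density of this geometry. Since $\alpha<b^2(b+1)+1$, for each fixed $b$ only finitely many values $\alpha\in(b,\,b^2(b+1)+1)$ survive, and the conjecture reduces to showing that none of them is realisable unless $b$ is a perfect square and $\alpha=b+\sqrt{b}$. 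The known extremal geometries --- the half dual polar graphs and the distance $1$-or-$2$ graphs of symplectic dual polar graphs, with $b=q^2$ and $\alpha=q(q+1)=b+\sqrt{b}$ --- must survive any such argument, so the bound we prove has to be tight exactly at these points.

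The heart of the matter, and the step I expect to be the main obstacle, is to convert the cubic bound into the sharp bound uniformly in $b$. I anticipate a feasibility/integrality analysis of the geometry: a realisable local geometry with $b<\alpha$ should force a derived strongly regular graph (on the lines through a point, or on a suitable subconstituent) whose restricted eigenvalues satisfy a quadratic whose discriminant is a multiple of $b$; integrality of the eigenvalue multiplicities then excludes the conference case on size grounds and forces $\sqrt{b}\in\zz$, after which the parameter equations pin $\alpha$ to the unique admissible value $b+\sqrt{b}$. For $b\in[100]$ this rationality-and-integrality check is carried out directly, one $b$ at a time, which is exactly the restriction in Theorem \ref{alphab}. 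Removing it demands either a genuinely uniform sharpening of the Hoffman--Neumaier inequalities that produces the quadratic obstruction intrinsically, or a classification of the extremal geometries identifying them with the two families above. Establishing this uniform obstruction --- so that no local geometry with $b<\alpha<b+\sqrt{b}$, and none with $\alpha>b+\sqrt{b}$, occurs for any $b$ --- is the crux separating the present theorems from the full conjecture.
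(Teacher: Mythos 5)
The statement you are proving is a conjecture: the paper contains no proof of it, only two partial results, so your proposal must stand on its own, and it does not close. The decisive step --- what you yourself call ``the heart of the matter'' --- is only announced, not carried out. You \emph{anticipate} that any realisable local geometry with $b<\alpha$ forces a derived strongly regular graph whose multiplicity conditions yield $\sqrt{b}\in\mathbb{Z}$ and pin $\alpha$ to $b+\sqrt{b}$, but you give no construction of that derived graph, no parameter computation, and no argument excluding the conference case; you then concede that removing the restriction $b\in[100]$ ``demands either a genuinely uniform sharpening \ldots or a classification.'' That concession is exactly the open problem. Note also that your description of the paper's finite verification is inaccurate: Theorem \ref{alphab} is not proved by a rationality-and-multiplicity check on a derived strongly regular graph, but by testing, for each $b\in[100]$ and each of the finitely many $\alpha\in\frac{1}{b+1}\mathbb{N}$ with $\alpha\le b^2(b+1)$ allowed by Theorem \ref{beta}, the integrality of the intersection numbers $p^{14}_{77}$, $p^{12}_{66}$, $p^{10}_{55}$, $p^{8}_{44}$, $p^{6}_{33}$ supplied by Proposition \ref{417}, via computer. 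No one has extracted from these integrality conditions a quadratic obstruction uniform in $b$, and your proposal does not supply one either.

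There is a second, independent overreach in the structural input you invoke. You assert that ``the structural theorem presents $\Delta$ as a geometric graph: its edges are partitioned by a family of Delsarte cliques \ldots organised into a partial linear space.'' The full line-Hoffman-graph classification (Theorem \ref{intro2}, via Theorem \ref{line graphhjk} and the forbidden-matrix analysis of $\mathcal{M}(2)$) is established in the paper only for $\lambda_{\min}\ge -3$, i.e.\ effectively $b=2$. For general $b$, where $\lambda_{\min}(\Delta)\ge -(b+1)$, the paper proves only the weaker clique-cover statement (Theorem \ref{c1} and the unnumbered proposition following Theorem \ref{corep}): maximal cliques of order at least $q$, at most $b+1$ through each vertex, pairwise intersections of size at most $b$, covering each neighbourhood up to an error $R(c,\lambda,q)$ --- neither an edge partition nor a partial linear space, since two cliques may share up to $b$ points. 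Upgrading this to the geometry your argument needs would require carrying out the $\Go(t)$-line Hoffman graph programme at level $t=b+1$ for arbitrary $b$, which the paper does not do. Your first reduction (a diameter threshold $\ell$ uniform in $b$, obtained by playing $a_1>\alpha b^{D-2}$ against a threshold that is polynomial in $b$ thanks to the cubic bound of Theorem \ref{introalpha}) is sound and is a genuinely useful observation; but as it stands the proposal reduces the conjecture to two unproved claims rather than proving it.
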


In 1976, Cameron et al. \cite{Cameron} showed that a connected graph $G$ with {smallest eigenvalue} $\lambda_{\min}(G)\geq -2$ is a generalized line graph if the number of vertices is at least 37, by using the classification of the irreducible root lattices.
 In 1977, Hoffman \cite{Hoff1977} showed that a connected graph $G$ with smallest eigenvalue $\lambda_{\min}(G)>-1-\sqrt{2}$ is a generalized line graph if the {minimum degree} of $G$ is large enough. 
Hoffman did not use the classification of the irreducible root lattices for the proof, but instead he had to assume large minimum degree. 
In terms of line Hoffman graphs, a generalized line graph is the slim graph of a   \big\{\raisebox{-1ex}{\begin{tikzpicture}[scale=0.3]
		
		\tikzstyle{every node}=[draw,circle,fill=black,minimum size=10pt,scale=0.3,
		inner sep=0pt]
		
		\draw (-2.1,0) node (1f1) [label=below:$$] {};
		\draw (-1.1,0) node (1f2) [label=below:$$] {};

		\tikzstyle{every node}=[draw,circle,fill=black,minimum size=5pt,scale=0.3,
		inner sep=0pt]

		\draw (-1.6,1) node (1s1) [label=below:$$] {};
		
		\draw (1f1) -- (1s1) -- (1f2);
\end{tikzpicture}},\hspace{-0.08cm}
\raisebox{-1ex}{\begin{tikzpicture}[scale=0.3]
		\tikzstyle{every node}=[draw,circle,fill=black,minimum size=10pt,scale=0.3,
		inner sep=0pt]

		\draw (1.5,0) node (3f2) [label=below:$$] {};

		\tikzstyle{every node}=[draw,circle,fill=black,minimum size=5pt,scale=0.3,
		inner sep=0pt]

		\draw (1,1) node (3s1) [label=below:$$] {};
		\draw (2,1) node (3s2) [label=below:$$] {};
		
		\draw (3s1) -- (3f2) -- (3s2);
\end{tikzpicture}}\big\}
-line Hoffman graph. 
 The notions of Hoffman graphs and line Hoffman graphs are  introduced in Section \ref{2}, and the notion of associated Hoffman graphs in Section \ref{3}.

In 2018, Koolen et al. \cite{KYY19} showed the following result on graphs with smallest eigenvalue at least $-3$, which generalized the result by Hoffman \cite{Hoff1977}.
To state the result, we need to introduce the concept {of a} $t$-plex. Given a positive integer $t$, a \emph{$t$-plex} is a graph such that each vertex is adjacent to all but at most $p-1$ of the other vertices.
Especially, a \emph{clique} is a $1$-plex.
{The notions of Hoffman graphs, line Hoffman graphs, and slim graphs mentioned below will be introduced 
	in Sections \ref{hoffsec1} and \ref{hoffsec2}.
}


\begin{thm}[{\cite[Theorem 1.4]{KYY19}}]\label{intro1}
	There exists a positive integer $K$ such that if a graph $G$ satisfies the following conditions:
	\begin{enumerate}
		\item $d(x)>K$ for each $x\in V(G)$,
		\item any $5$-plex containing $x$ has order at most $d(x)-K$ for all $x \in V(G)$,
		\item $\lambda_{\min} (G) \geq -3$,
	\end{enumerate}
	then $G$ is the slim graph of a $2$-fat  \big\{\raisebox{-1ex}{\begin{tikzpicture}[scale=0.3]
			
			\tikzstyle{every node}=[draw,circle,fill=black,minimum size=10pt,scale=0.3,
			inner sep=0pt]
			
			\draw (-2.1,0) node (1f1) [label=below:$$] {};
			\draw (-1.6,0) node (1f2) [label=below:$$] {};
			\draw (-1.1,0) node (1f3) [label=below:$$] {};

			\tikzstyle{every node}=[draw,circle,fill=black,minimum size=5pt,scale=0.3,
			inner sep=0pt]

			\draw (-1.6,1) node (1s1) [label=below:$$] {};
			
			\draw (1f1) -- (1s1) -- (1f2);
			\draw (1f3) -- (1s1);
	\end{tikzpicture}},\hspace{-0.08cm}
	\raisebox{-1ex}{\begin{tikzpicture}[scale=0.3]
			\tikzstyle{every node}=[draw,circle,fill=black,minimum size=10pt,scale=0.3,
			inner sep=0pt]

			\draw (-0.5,0) node (2f1) [label=below:$$] {};
			\draw (0.5,0) node (2f2) [label=below:$$] {};
			\draw (-0.5,1) node (2f3) [label=below:$$] {};
			\draw (0.5,1) node (2f4) [label=below:$$] {};

			\tikzstyle{every node}=[draw,circle,fill=black,minimum size=5pt,scale=0.3,
			inner sep=0pt]

			\draw (0,0.2) node (2s1) [label=below:$$] {};
			\draw (0.3,0.5) node (2s2) [label=below:$$] {};
			\draw (-0.3,0.5) node (2s3) [label=below:$$] {};
			\draw (0,0.8) node (2s4) [label=below:$$] {};

			\draw (2f1) -- (2s1) -- (2f2) -- (2s2) -- (2f4) -- (2s4) -- (2f3) -- (2s3) -- (2f1);
			\draw (2s1) -- (2s4);
			\draw (2s2) -- (2s3);
	\end{tikzpicture}},\hspace{-0.08cm}
	\raisebox{-1ex}{\begin{tikzpicture}[scale=0.3]
			\tikzstyle{every node}=[draw,circle,fill=black,minimum size=10pt,scale=0.3,
			inner sep=0pt]

			\draw (1.5,0) node (3f1) [label=below:$$] {};
			\draw (1.5,1) node (3f2) [label=below:$$] {};

			\tikzstyle{every node}=[draw,circle,fill=black,minimum size=5pt,scale=0.3,
			inner sep=0pt]

			\draw (1,0.5) node (3s1) [label=below:$$] {};
			\draw (2,0.5) node (3s2) [label=below:$$] {};
			
			\draw (3s1) -- (3s2);
			\draw (3f1) -- (3s1) -- (3f2) -- (3s2) -- (3f1);
	\end{tikzpicture}}\big\}-line Hoffman graph.
\end{thm}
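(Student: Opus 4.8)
The plan is to reduce the global statement to a local, block-by-block analysis, exploiting the fact that $\lambda_{\min}(G)\geq -3$ severely restricts the possible local structure. The starting point is the (known) finiteness of indecomposable fat Hoffman graphs with smallest eigenvalue at least $-3$: up to isomorphism there are only finitely many, and I would first isolate those that can occur as the associated Hoffman graphs of a graph with large minimum degree. Concretely, I would aim to realize $G$ as the slim graph of a suitable Hoffman graph $\Ho$ built so that each fat vertex records a maximal large ``quasi-clique'' of $G$; the goal is to show that $\Ho$ can be chosen $2$-fat and that each of its associated Hoffman graphs is isomorphic to one of the three displayed in the statement.

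First I would carry out the local analysis. Fix $x\in V(G)$ and consider the subgraph induced on $N(x)$; interlacing gives $\lambda_{\min}(G[N(x)])\geq\lambda_{\min}(G)\geq -3$, so Hoffman theory applies to the neighbourhood and forces the edges at $x$ to organise into a bounded number of large quasi-cliques together with a bounded-size remainder. To upgrade these into honest rigid large cliques I would invoke a Ramsey-type argument: hypothesis (ii), bounding the order of every $5$-plex through $x$ by $d(x)-K$, prevents $N(x)$ from containing a large ``thick'' set with no large clique, while hypothesis (i), $d(x)>K$, guarantees the extracted cliques are large enough to behave rigidly. This is precisely the step the present paper later replaces by a Bose--Laskar argument; here I would retain the Ramsey extraction, obtaining through each vertex a bounded number of large quasi-cliques covering $N(x)$.

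With the quasi-cliques in hand I would assemble $\Ho$ by introducing one fat vertex per maximal quasi-clique and attaching to it the slim vertices contained in that quasi-clique, amalgamating over shared fat vertices. Two-fatness --- every slim vertex has at least two fat neighbours --- would follow from showing that each $x$ lies in at least two large quasi-cliques, which is exactly where $\lambda_{\min}(G)\geq -3$ is combined with the large degree: a vertex lying in only one quasi-clique would force a forbidden eigenvalue configuration. It then remains to identify the associated Hoffman graphs of $\Ho$. Since $\Ho$ has smallest eigenvalue at least $-3$ by construction, each associated Hoffman graph is one of the finitely many classified indecomposable fat Hoffman graphs, and I would eliminate all but the three displayed ones by feeding conditions (i) and (ii) back in: the large minimum degree rules out the small or ``thin'' pieces, and the $5$-plex bound rules out the pieces whose slim graph would create a large $5$-plex through some vertex.

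I expect the main obstacle to be the passage from the quasi-clique cover to a genuinely rigid, globally consistent line Hoffman structure: one must control how the quasi-cliques through adjacent vertices overlap, so that the amalgam over the common fat vertices is well defined and no extra, unwanted associated Hoffman graphs are forced. Equivalently, the delicate point is excluding all the ``bad'' indecomposable fat Hoffman graphs with $\lambda_{\min}\geq -3$ \emph{simultaneously} and \emph{uniformly} in a single constant $K$, rather than case by case with mutually incompatible thresholds. Carrying $K$ as one absolute constant through both the Ramsey extraction and the exclusion of bad pieces is the crux of the argument.
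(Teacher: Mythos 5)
First, a point of orientation: this paper never proves Theorem~\ref{intro1} at all --- it is quoted from \cite{KYY19}, and the paper's own contribution (Theorem~\ref{intro2}) is a $\mu$-bounded analogue in which the Ramsey extraction is replaced by the Bose--Laskar Lemma~\ref{blhlem}. Your skeleton (quasi-cliques $\to$ Hoffman graph, Ramsey extraction from (i)--(ii), $2$-fatness, identification of the blocks) does follow the route of \cite{KYY19} that the paper describes. However, your declared starting point contains a genuine error: it is \emph{false} that there are only finitely many indecomposable fat Hoffman graphs with smallest eigenvalue at least $-3$. Take slim graph $K_n$ and give every slim vertex two private fat vertices; the special matrix is $J_n-3I$, whose smallest eigenvalue is $-3$, and it is indecomposable for every $n$ since all off-diagonal entries equal $1$. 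So the plan ``isolate the finitely many indecomposables that can occur'' cannot even begin. What is actually available, and what the real proof uses, is the line-Hoffman-graph machinery: one shows that the associated Hoffman graph $\go(G,q)$ is $2$-fat and that its special matrix contains no principal submatrix equivalent to a member of the \emph{finite forbidden list} $\mathcal{M}(2)$; then Theorem~\ref{line graphhjk} (KYY19, Theorem 3.7) gives that $\go(G,q)$ is a $\Go(2)$-line Hoffman graph, i.e.\ it becomes decomposable into members of the finite family $\Go(2)$ \emph{after adding fat vertices}. The finiteness lives at the level of special-matrix blocks of an enlarged Hoffman graph, never at the level of indecomposable Hoffman graphs; indeed my $J_n-3I$ example is itself such a line Hoffman graph (adjoin one fat vertex adjacent to all slim vertices and the special matrix becomes $-3I$, a direct sum of copies of $(-3)$).

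The mechanism by which the ``bad pieces'' are excluded is also misattributed, and this is the second substantive gap. You propose to eliminate them ``by feeding conditions (i) and (ii) back in,'' but those hypotheses play no role in that step: the exclusion is exactly where $\lambda_{\min}(G)\geq-3$ is spent, via a substitution argument (the analogue in this paper is Propositions~\ref{cal} and~\ref{assohoff} feeding into Theorem~\ref{forbiddenthm}). If $\go(G,q)$ contained an induced Hoffman subgraph with special matrix in $\mathcal{M}(2)$, then, because each fat vertex records a large clique of $G$, one could realize $G(\ho,p)$ for large $p$ as an \emph{induced subgraph of $G$}, and by Proposition~\ref{epsilon} this graph has smallest eigenvalue below $-3$ --- a contradiction. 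Conversely, $2$-fatness does not follow from ``$\lambda_{\min}\geq-3$ plus large degree forcing a forbidden eigenvalue configuration,'' as you assert: a single huge clique has $\lambda_{\min}=-1$ and huge degree, yet its associated Hoffman graph is only $1$-fat. Two-fatness is precisely what the $5$-plex hypothesis (ii) buys: any one (quasi-)clique through $x$ misses at least $K$ neighbors of $x$, and the Ramsey step applied to the missed part yields a second large clique through $x$. Without the forbidden-matrix/substitution lemma there is no mechanism in your outline that identifies the blocks as the three displayed graphs, and with (ii) deployed in the wrong place the $2$-fatness claim fails; these are the two ideas that would have to be added for the proposal to become a proof.
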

For the proof, it is important to have large cliques. To obtain those large cliques in the proof of Theorem \ref{intro1}, they needed to use Ramsey theory, {implying} that they do not have a reasonable bound for $K$.  

By combining the Bose-Laskar type argument and Hoffman theory, we give the following result which  is a version of Theorem \ref{intro1} for $\mu$-bounded graphs.  {As a} consequence, we derive a  reasonable value for $K$ in this case. The main reason that we can obtain a reasonable bound for $K$ is  Lemma \ref{blhlem}.
This lemma guarantees the existence of large cliques without relying on Ramsey theory {which}  plays a key role in the proof of Theorem \ref{intro2}.

\begin{thm}\label{intro2}
	Let $c$ be a positive integer. Let $\tilde{c}:=\min\{c,6\}$ and $q:=\max\{c+5, 50\tilde{c}+16\}$. If a graph $G$ satisfies the following conditions:
	\begin{enumerate}
		\item $G$ is a $\mu$-bounded graph with parameter $c$,
		\item every clique $C$ has order at most $\min_{x\in V(C)}d(x)-K$, where $K=\max\{36c+400\tilde{c}+83, 44c-5 \}$,
		\item $\lambda_{\min} (G) \geq -3$,
	\end{enumerate}
	then the associated Hoffman graph $\go(G,q)$ is a $2$-fat \big\{\raisebox{-1ex}{\begin{tikzpicture}[scale=0.3]
			
			\tikzstyle{every node}=[draw,circle,fill=black,minimum size=10pt,scale=0.3,
			inner sep=0pt]
			
			\draw (-2.1,0) node (1f1) [label=below:$$] {};
			\draw (-1.6,0) node (1f2) [label=below:$$] {};
			\draw (-1.1,0) node (1f3) [label=below:$$] {};

			\tikzstyle{every node}=[draw,circle,fill=black,minimum size=5pt,scale=0.3,
			inner sep=0pt]

			\draw (-1.6,1) node (1s1) [label=below:$$] {};
			
			\draw (1f1) -- (1s1) -- (1f2);
			\draw (1f3) -- (1s1);
	\end{tikzpicture}},\hspace{-0.08cm}
	\raisebox{-1ex}{\begin{tikzpicture}[scale=0.3]
			\tikzstyle{every node}=[draw,circle,fill=black,minimum size=10pt,scale=0.3,
			inner sep=0pt]

			\draw (1.5,0) node (3f1) [label=below:$$] {};
			\draw (1.5,1) node (3f2) [label=below:$$] {};

			\tikzstyle{every node}=[draw,circle,fill=black,minimum size=5pt,scale=0.3,
			inner sep=0pt]

			\draw (1,0.5) node (3s1) [label=below:$$] {};
			\draw (2,0.5) node (3s2) [label=below:$$] {};
			
			\draw (3s1) -- (3s2);
			\draw (3f1) -- (3s1) -- (3f2) -- (3s2) -- (3f1);
	\end{tikzpicture}}\big\}
	-line Hoffman graph.
\end{thm}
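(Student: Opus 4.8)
The plan is to mirror the strategy behind Theorem~\ref{intro1}, but to replace its Ramsey-theoretic production of large cliques by the explicit Bose-Laskar-type estimate of Lemma~\ref{blhlem}, which is what makes the bounds on $q$ and $K$ effective. First I would invoke Lemma~\ref{blhlem} together with hypotheses (1)--(3) to show that every edge of $G$ is contained in a clique of order at least $q$. The $\mu$-boundedness controls how much the common neighborhood of a partial clique can shrink at each step, while $\lambda_{\min}(G)\ge -3$ limits the number of ``missing'' edges that a greedy extension must avoid; balancing these two effects is exactly where the quantities $\tilde c$, $q$ and $K$ enter, and I would expect the threshold $q=\max\{c+5,\,50\tilde c+16\}$ to be precisely the size above which a maximal clique is forced to behave like a fat vertex.

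With large cliques in hand, the next step is to assemble the associated Hoffman graph $\go(G,q)$ as recalled in Section~\ref{3}: the slim vertices are the vertices of $G$, the fat vertices are the maximal cliques of order at least $q$, slim--slim adjacency is inherited from $G$, and a slim vertex is joined to a fat vertex exactly when it lies in the corresponding clique. I would then verify that $\go(G,q)$ inherits the bound $\lambda_{\min}(\go(G,q))\ge -3$ from $G$, using the standard comparison between the special matrix of the associated Hoffman graph and the adjacency matrix of $G$; here the fact that the fat vertices come from genuine cliques is what keeps the smallest eigenvalue from dropping below $-3$.

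The core of the argument is to prove that $\go(G,q)$ is $2$-fat and then to identify its indecomposable fat components. For $2$-fatness I would argue that each vertex $x$ lies in at least two of the large cliques: if the neighborhood of $x$ were covered by a single large clique then that clique would have order close to $d(x)$, contradicting hypothesis~(2), which caps every clique at $\min_{y}d(y)-K$; and the $\mu$-bounded condition prevents two large cliques through $x$ from sharing too many vertices, so at least two genuinely distinct fat neighbors of $x$ survive. Once $2$-fatness is established, I would appeal to the classification of indecomposable $2$-fat Hoffman graphs with smallest eigenvalue at least $-3$ developed in the earlier sections to conclude that every indecomposable fat component of $\go(G,q)$ is one of the two displayed Hoffman graphs, whence $\go(G,q)$ is a $2$-fat line Hoffman graph of the required type.

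The main obstacle I anticipate is the simultaneous quantitative coordination of these steps. The Bose-Laskar extension, the lower bound on the number of fat neighbors of each slim vertex, and the estimate needed to exclude every indecomposable fat Hoffman graph other than the two in the conclusion each impose a separate constraint relating $q$ and $K$ to $c$ and $\tilde c$. Arranging all of them to hold with the stated explicit constants, rather than merely for ``sufficiently large'' parameters, is the delicate point, and it is exactly here that the present $\mu$-bounded approach improves on the Ramsey-based proof of Theorem~\ref{intro1}.
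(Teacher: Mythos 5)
Your proposal follows the paper's high-level skeleton (a Bose--Laskar lemma to force $2$-fatness, then Hoffman theory to identify $\go(G,q)$), but it contains a step that fails and it stops short of the stated conclusion. The failing step is the claim that $\lambda_{\min}(\go(G,q))\geq -3$ is ``inherited'' from $\lambda_{\min}(G)\geq -3$ via a standard comparison of the special matrix with the adjacency matrix. That comparison goes in the wrong direction: since $A(G)=S(\go(G,q))+D^TD$ with $D^TD$ positive semidefinite, one only gets $\lambda_{\min}(G)\geq\lambda_{\min}(\go(G,q))$, so the special matrix may a priori have much smaller eigenvalues than $G$ itself. The paper never proves such an inheritance directly; instead it proves the weaker but sufficient statement (Theorem~\ref{forbiddenthm}) that $S(\go(G,q))$ contains no principal submatrix in $\mathcal{M}(2)$, arguing by contradiction: if it did, then by Lemma~\ref{m2} the graph $\go(G,q)$ would contain a member of $\Ho$ as an induced Hoffman subgraph; Proposition~\ref{assohoff} (this is exactly where the $\mu$-bound and the size of $q$ are used, via the embedding of $G(\ho,p)$ into $G$) would then force $G$ to contain one of the graphs listed in Proposition~\ref{cal} as an induced subgraph, and those graphs have smallest eigenvalue strictly below $-3$. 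Only after this does Theorem~\ref{line graphhjk} apply to conclude that $\go(G,q)$ is a $2$-fat $\Go(2)$-line Hoffman graph. Without this substitute argument your proof has no bridge from the spectral hypothesis on $G$ to any structural statement about $\go(G,q)$.

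The second gap is at the end. Even granting the above, Theorem~\ref{line graphhjk} together with Lemma~\ref{b2linehoff} only yields that $\go(G,q)$ is a line Hoffman graph over the \emph{three}-member family appearing in Theorem~\ref{intro1}, which includes the Hoffman graph with four fat and four slim vertices; the conclusion of Theorem~\ref{intro2} lists only two members. Excluding the third is not a consequence of any classification of indecomposable $2$-fat Hoffman graphs with smallest eigenvalue at least $-3$ (no such statement is proved or used in the paper); it requires the paper's final argument, which is special to \emph{associated} Hoffman graphs: by Definition~\ref{asso} the slim neighbors of a fat vertex of $\go(G,q)$ form a clique of $G$, and combining this with Lemma~\ref{combi}(iii) and $2$-fatness shows that any decomposition member sitting inside the four-fat--four-slim graph must have pairwise adjacent slim vertices, hence is isomorphic to one of the two small Hoffman graphs in the conclusion. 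A minor further inaccuracy: your opening step, that every \emph{edge} of $G$ lies in a clique of order at least $q$, is neither what Lemma~\ref{blhlem} provides nor what is needed; the relevant statement (Corollary~\ref{blhcor}) is that every \emph{vertex} lies in two distinct maximal cliques of order at least $q$, which is precisely the $2$-fatness of $\go(G,q)$.
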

 
 Notice that if a $\mu$-bounded graph $G$ with parameter $c$ is the slim graph of a $2$-fat \big\{\raisebox{-1ex}{\begin{tikzpicture}[scale=0.3]
 		
 		\tikzstyle{every node}=[draw,circle,fill=black,minimum size=10pt,scale=0.3,
 		inner sep=0pt]
 		
 		\draw (-2.1,0) node (1f1) [label=below:$$] {};
 		\draw (-1.6,0) node (1f2) [label=below:$$] {};
 		\draw (-1.1,0) node (1f3) [label=below:$$] {};

 		\tikzstyle{every node}=[draw,circle,fill=black,minimum size=5pt,scale=0.3,
 		inner sep=0pt]

 		\draw (-1.6,1) node (1s1) [label=below:$$] {};
 		
 		\draw (1f1) -- (1s1) -- (1f2);
 		\draw (1f3) -- (1s1);
 \end{tikzpicture}},\hspace{-0.08cm}
 \raisebox{-1ex}{\begin{tikzpicture}[scale=0.3]
 		\tikzstyle{every node}=[draw,circle,fill=black,minimum size=10pt,scale=0.3,
 		inner sep=0pt]

 		\draw (1.5,0) node (3f1) [label=below:$$] {};
 		\draw (1.5,1) node (3f2) [label=below:$$] {};

 		\tikzstyle{every node}=[draw,circle,fill=black,minimum size=5pt,scale=0.3,
 		inner sep=0pt]

 		\draw (1,0.5) node (3s1) [label=below:$$] {};
 		\draw (2,0.5) node (3s2) [label=below:$$] {};
 		
 		\draw (3s1) -- (3s2);
 		\draw (3f1) -- (3s1) -- (3f2) -- (3s2) -- (3f1);
 \end{tikzpicture}}\big\}-line Hoffman graph, then its structure is characterized in
Propositions \ref{muassle} and  \ref{muassle2}, and thus $c\leq 18$ and $\lambda_{\min}(G)\geq -3$. In particular, $c\leq 12$ if $G$ is regular. 
 
	 


In fact, for fixed $\epsilon>0$, the condition (iii) of Theorem \ref{intro2} can be generalized to $\lambda_{\min} (G) \geq -2-\sqrt{2}+\epsilon$ (see Theorem \ref{corep}), and  the value of $K$ in (ii) would depend on $\epsilon$. 
It is worth mentioning that the condition $\epsilon>0$ is necessary.
For example, let $H:=L(K_{p,p})\square K_{1,2}$ be the Cartesian product of $L(K_{p,p})$ and $K_{1,2}$, where $L(K_{p,p})$ is the line graph of complete bipartite graph $K_{p,p}$. It follows that $H$
 is a $\mu$-bounded graph with parameter $6$, every vertex lies in two distinct cliques of order $p$, and $\lambda_{\min}(H)=-2-\sqrt{2}$ and $p$ can be arbitrary large. But $H$ is not the slim graph of  a $2$-fat
\big\{\raisebox{-1ex}{\begin{tikzpicture}[scale=0.3]
			
			\tikzstyle{every node}=[draw,circle,fill=black,minimum size=10pt,scale=0.3,
			inner sep=0pt]
			
			\draw (-2.1,0) node (1f1) [label=below:$$] {};
			\draw (-1.6,0) node (1f2) [label=below:$$] {};
			\draw (-1.1,0) node (1f3) [label=below:$$] {};

			\tikzstyle{every node}=[draw,circle,fill=black,minimum size=5pt,scale=0.3,
			inner sep=0pt]

			\draw (-1.6,1) node (1s1) [label=below:$$] {};
			
			\draw (1f1) -- (1s1) -- (1f2);
			\draw (1f3) -- (1s1);
	\end{tikzpicture}},\hspace{-0.08cm}
	\raisebox{-1ex}{\begin{tikzpicture}[scale=0.3]
			\tikzstyle{every node}=[draw,circle,fill=black,minimum size=10pt,scale=0.3,
			inner sep=0pt]

			\draw (1.5,0) node (3f1) [label=below:$$] {};
			\draw (1.5,1) node (3f2) [label=below:$$] {};

			\tikzstyle{every node}=[draw,circle,fill=black,minimum size=5pt,scale=0.3,
			inner sep=0pt]

			\draw (1,0.5) node (3s1) [label=below:$$] {};
			\draw (2,0.5) node (3s2) [label=below:$$] {};
			
			\draw (3s1) -- (3s2);
			\draw (3f1) -- (3s1) -- (3f2) -- (3s2) -- (3f1);
	\end{tikzpicture}}\big\}-line Hoffman graph.

As an application of Theorem \ref{intro2}, we give a characterization for the structure of local graphs for some distance-regular graphs with classical parameters.  
In particular, Theorem \ref{a5} is derived from Corollary \ref{incor}.

\begin{cor}\label{incor}
	Let $\Gamma$ be a distance-regular graph with classical parameters $(D,b,\alpha,\beta)$ such that $b=2$ and $\alpha\neq 0$.
	 Let $c:=3\alpha+2$, $\tilde{c}:=\min\{c,6\}$ and $q:=\max\{c+5, 50\tilde{c}+16\}$.
	 For each vertex $x$, the local graph $\Gamma_x$ at $x$  is $\mu$-bounded with parameter $c $ and the associated Hoffman graph $\go(\Gamma_x,q)$ is a $2$-fat \big\{\raisebox{-1ex}{\begin{tikzpicture}[scale=0.3]
	 		
	 		\tikzstyle{every node}=[draw,circle,fill=black,minimum size=10pt,scale=0.3,
	 		inner sep=0pt]
	 		
	 		\draw (-2.1,0) node (1f1) [label=below:$$] {};
	 		\draw (-1.6,0) node (1f2) [label=below:$$] {};
	 		\draw (-1.1,0) node (1f3) [label=below:$$] {};

	 		\tikzstyle{every node}=[draw,circle,fill=black,minimum size=5pt,scale=0.3,
	 		inner sep=0pt]

	 		\draw (-1.6,1) node (1s1) [label=below:$$] {};
	 		
	 		\draw (1f1) -- (1s1) -- (1f2);
	 		\draw (1f3) -- (1s1);
	 \end{tikzpicture}},\hspace{-0.08cm}
	 \raisebox{-1ex}{\begin{tikzpicture}[scale=0.3]
	 		\tikzstyle{every node}=[draw,circle,fill=black,minimum size=10pt,scale=0.3,
	 		inner sep=0pt]

	 		\draw (1.5,0) node (3f1) [label=below:$$] {};
	 		\draw (1.5,1) node (3f2) [label=below:$$] {};

	 		\tikzstyle{every node}=[draw,circle,fill=black,minimum size=5pt,scale=0.3,
	 		inner sep=0pt]

	 		\draw (1,0.5) node (3s1) [label=below:$$] {};
	 		\draw (2,0.5) node (3s2) [label=below:$$] {};
	 		
	 		\draw (3s1) -- (3s2);
	 		\draw (3f1) -- (3s1) -- (3f2) -- (3s2) -- (3f1);
	 \end{tikzpicture}}\big\}-line Hoffman graph if $\alpha(2^D-2)\geq \max\{36c+400\tilde{c}+84, 44c-4 \}$.
\end{cor}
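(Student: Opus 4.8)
The plan is to apply Theorem \ref{intro2} to the graph $G=\Gamma_x$ with the choice $c=3\alpha+2$, so that the entire task reduces to checking the three hypotheses of that theorem for each local graph $\Gamma_x$. First I would record the relevant intersection numbers of a distance-regular graph with classical parameters $(D,2,\alpha,\beta)$. Using that the Gaussian binomial equals $2^D-1$ when $b=2$, one gets $c_2=(b+1)(1+\alpha)=3\alpha+3$, valency $k=(2^D-1)\beta$, and $a_1=(2^D-2)\alpha+\beta-1$. Since $c_2=3(1+\alpha)$ is a positive integer and $\alpha\geq 0$, the number $c=c_2-1=3\alpha+2$ is a positive integer, as Theorem \ref{intro2} requires. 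Moreover, because $\Gamma$ is distance-regular, $\Gamma_x$ is regular of degree $a_1$, so in condition (ii) the quantity $\min_{y\in V(C)}d_{\Gamma_x}(y)$ equals $a_1$ for every clique $C$ of $\Gamma_x$.

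For condition (i), I would take two vertices $y,z\in N(x)$ non-adjacent in $\Gamma_x$; they are at distance $2$ in $\Gamma$, hence have exactly $c_2=3\alpha+3$ common neighbours in $\Gamma$, one of which is $x$. Every common neighbour of $y$ and $z$ inside $\Gamma_x$ lies in $N(x)$ and is therefore distinct from $x$, so $y$ and $z$ have at most $c_2-1=3\alpha+2=c$ common neighbours in $\Gamma_x$; thus $\Gamma_x$ is $\mu$-bounded with parameter $c$. For condition (iii), I would invoke the standard bound on the local eigenvalues of a distance-regular graph: every eigenvalue of $\Gamma_x$ is at least $-1-\frac{b_1}{\theta_1+1}$, where $\theta_1$ is the second largest eigenvalue of $\Gamma$. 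For classical parameters one computes $\theta_1+1=b_1/b$, whence this bound equals $-1-b=-3$, so $\lambda_{\min}(\Gamma_x)\geq -3$. Both of these inputs are standard facts about classical parameters that I would cite from Section \ref{2}.

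The substance of the argument is condition (ii), i.e. bounding the clique order in $\Gamma_x$. A clique $C$ of $\Gamma_x$ together with $x$ is a clique of $\Gamma$, so it suffices to bound cliques of $\Gamma$. The smallest eigenvalue of $\Gamma$ is $\theta_D=-(2^D-1)$, so the Delsarte--Hoffman clique bound gives that every clique of $\Gamma$ has order at most $1-k/\theta_D=1+\beta$; hence every clique $C$ of $\Gamma_x$ has order at most $\beta$. Condition (ii) therefore amounts to $\beta\leq a_1-K$, that is $a_1-\beta\geq K$. Since $a_1-\beta=(2^D-2)\alpha-1$ and $K+1=\max\{36c+400\tilde{c}+84,\,44c-4\}$, this is exactly the hypothesis $\alpha(2^D-2)\geq\max\{36c+400\tilde{c}+84,\,44c-4\}$. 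With all three conditions verified, Theorem \ref{intro2} yields that $\go(\Gamma_x,q)$ is the required $2$-fat line Hoffman graph.

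I expect the main obstacle to be organisational rather than deep: the delicate point is that the clique bound $|C|\leq\beta$ and the regularity degree $a_1=(2^D-2)\alpha+\beta-1$ must combine so that $a_1-\beta$ collapses precisely to $(2^D-2)\alpha-1$, reproducing the stated numerical threshold $K+1$ with no slack. Everything then hinges on the two structural facts about classical parameters, namely $\theta_D=-(2^D-1)$ (for the clique bound) and $\theta_1+1=b_1/b$ (for $\lambda_{\min}(\Gamma_x)\geq -1-b=-3$), which I would borrow from Section \ref{2} rather than reprove here.
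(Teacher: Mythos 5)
Your proposal is correct and follows essentially the same route as the paper: both verify the three hypotheses of Theorem \ref{intro2} for $G=\Gamma_x$ using the fact that the local graph is $a_1$-regular and $\mu$-bounded with parameter $c_2-1=3\alpha+2$, Terwilliger's bound (Theorem \ref{bcnth}) combined with $\lambda_1=\frac{b_1}{2}-1$ from Lemma \ref{ei} to get $\lambda_{\min}(\Gamma_x)\geq-3$, and the Delsarte bound (Lemma \ref{dels}) with $\lambda_{\min}(\Gamma)=-2^D+1$ to bound cliques by $\beta$, so that $a_1-\beta=\alpha(2^D-2)-1\geq K$ reduces exactly to the stated hypothesis. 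No gaps; the bookkeeping of $K+1=\max\{36c+400\tilde{c}+84,\,44c-4\}$ is handled exactly as in the paper.
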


\begin{remark}
If $\Gamma$ is a distance-regular graph with classical parameters $(D, 2, 2, 2^{D+2}-2)$, then the above result {provides structural insight} for any local graph of $\Gamma$.
On this moment there are two infinite families known with these parameters, namely the Grassmann graphs $J_2(2D+1, D)$ and the twisted Grassmann graphs 
$\tilde{J}_2(2D+1, D)$, but we do not know whether these are the only distance-regular graphs with these parameters for large $D$.
\end{remark}


This paper is organized as follows. In Section 2, we give definitions and basic theory of Hoffman graphs. We also give some results about the forbidden matrices introduced by Koolen et al. \cite{KYY19}.
In Section 3, we give a theory for the associated Hoffman graph of $\mu$-bounded graphs. 
This is a simplified version of the original method of Hoffman as given in \cite{KKY}.
We also show some properties for associated Hoffman graphs.
In Section 4, we firstly give the crucial Lemma \ref{blhlem} with an argument inspired by the Bose-Laskar method. After that, we give a proof for the main result Theorem \ref{intro2}. In Section \ref{aDRG}, we give some applications on distance-regular graphs with classical parameters and prove Corollary \ref{incor}, Theorem \ref{a5},
and Theorem \ref{alphab}. 
We also give a proof of Theorem \ref{introalpha}. This will be a consequence of Theorem \ref{beta}, whose proof will be also given in that section. 

\section{Definitions and preliminaries}\label{2}

\subsection{Distance-regular graphs}\label{DRGc}
A connected graph $\Gamma$ with diameter $D$ is called {\em{distance-regular}} if for $0\leq i\leq D$ there are integers $b_i$ and $c_i$  such that for every pair of vertices $x, y \in V(\Gamma)$ with $d(x, y)=i$, among the neighbors of $y$,
there are precisely $c_i$ (resp. $b_i$) at distance $i-1$ (resp. $i+1$) from $x$.
This pivotal concept led to significant results in graph theory and algebraic
combinatorics \cite{BCN}.

 In this paper, we denote a distance-regular graph by $\Gamma$.
Every distance-regular graph  is regular with degree $k= b_0$.  We define $a_i := k-b_i-c_i$ for notational convenience.   Note that local graphs of $\Gamma$ are $a_1$-regular and $\mu$-bounded with parameter $c_2-1$ on $b_0$ vertices.
 The numbers $c_i, a_i$, and $b_i$ $(0\leq i\leq D)$ are called the {\em intersection numbers} of $\Gamma$ with the following property.

\begin{pro}[{\cite[Theorem 4.1.7]{BCN}}]\label{417}
	If $i$ and $h$ are positive integers such that $i+h\leq D$, then
	$p^{i+h}_{ih}:=\frac{c_{i+1}\cdots c_{i+h}}{c_1\cdots c_h}$ is a non-negative {integer}. 
\end{pro}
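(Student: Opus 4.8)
The plan is to recover the standard combinatorial meaning of $p^{i+h}_{ih}$ and then prove the closed form by a double count of geodesics. For a fixed pair of vertices $x,y$ with $d(x,y)=i+h$, the intersection number $p^{i+h}_{ih}$ is the number of vertices $z$ with $d(x,z)=i$ and $d(z,y)=h$; distance-regularity guarantees this count is independent of the chosen pair. Since it is by definition a cardinality, it is automatically a non-negative integer, so the entire content of the statement is the identity $p^{i+h}_{ih}=\frac{c_{i+1}\cdots c_{i+h}}{c_1\cdots c_h}$.

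The key auxiliary fact I would establish first is: if $d(u,v)=d\leq D$, then the number of geodesics (shortest paths) from $u$ to $v$ equals $c_1c_2\cdots c_d$. This I would prove by a peeling argument: starting from $v$ (at distance $d$ from $u$), the penultimate vertex of a geodesic must be a neighbor of $v$ at distance $d-1$ from $u$, and there are exactly $c_d$ such choices; from any vertex at distance $d-1$ there are $c_{d-1}$ neighbors at distance $d-2$, and so on. Because each $c_j$ is independent of the particular vertex --- which is exactly where distance-regularity enters --- the count telescopes to $\prod_{j=1}^{d}c_j$, depending only on $d$. I would also record that $c_j\geq 1$ for $1\leq j\leq D$, so none of the denominators below vanish.

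Next I would count the geodesics from $x$ to $y$ (with $d(x,y)=i+h$) in two ways. On one hand there are $c_1\cdots c_{i+h}$ of them. On the other hand, every such geodesic meets a unique vertex $z$ at distance $i$ from $x$, and along a geodesic one has $d(z,y)=d(x,y)-d(x,z)=h$; conversely any geodesic $x\to z$ concatenated with any geodesic $z\to y$ (for $z$ with $d(x,z)=i$, $d(z,y)=h$) is again a geodesic $x\to y$. Hence the geodesics factor bijectively through the midpoint, giving
\[
c_1\cdots c_{i+h}\;=\;p^{i+h}_{ih}\,\bigl(c_1\cdots c_i\bigr)\bigl(c_1\cdots c_h\bigr).
\]
Solving for $p^{i+h}_{ih}$ and cancelling yields the claimed formula, while non-negativity and integrality are inherited from its interpretation as a vertex count.

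The main obstacle is the geodesic-counting lemma: one must argue carefully that the number of shortest paths from $u$ to $v$ depends only on $d(u,v)$ and telescopes to $\prod_j c_j$. The delicate points are that the peeling step genuinely uses distance-regularity (the number of neighbors at distance $j-1$ of a vertex at distance $j$ from $u$ is $c_j$ regardless of the vertex), and that the factorization through the distance-$i$ midpoint is a true bijection --- that is, concatenating two geodesics whose lengths sum to $d(x,y)$ always produces a geodesic, and the midpoint at distance $i$ is unique. Once this lemma is in place, the remainder is a one-line division.
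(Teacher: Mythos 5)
Your proof is correct. Note, however, that the paper itself contains no proof of this statement: it is quoted verbatim as \cite[Theorem 4.1.7]{BCN}, so there is no internal argument to compare against; what you have written is essentially the classical argument behind the cited result. Both halves of your plan are sound: the peeling lemma (a vertex at distance $j$ from $u$ has exactly $c_j$ neighbors at distance $j-1$, independent of the vertex, so the number of geodesics between vertices at distance $d$ telescopes to $c_1\cdots c_d$, and $c_j\geq 1$ ensures the denominator is nonzero), and the bijective factorization of geodesics from $x$ to $y$ through the intermediate vertex $z$: the triangle inequality forces any vertex in position $i$ on such a geodesic to satisfy $d(x,z)=i$ and $d(z,y)=h$, it is the unique vertex on that geodesic at distance $i$ from $x$, and conversely any concatenation of geodesics $x\to z\to y$ of total length $d(x,y)$ is a walk of minimal length and hence itself a geodesic. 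The resulting identity
\[
c_1\cdots c_{i+h}\;=\;p^{i+h}_{ih}\,\bigl(c_1\cdots c_i\bigr)\bigl(c_1\cdots c_h\bigr)
\]
gives the closed form after cancellation, and non-negativity and integrality are automatic because $p^{i+h}_{ih}$ counts vertices.
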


Here we introduce the Delsarte bound for distance-regular graphs, which gives an upper bound on the order of  cliques. 
\begin{lem}[{\cite[Proposition 4.4.6]{BCN}}]\label{dels}
	Let $\Gamma$ be a distance-regular graph with degree $k$ and smallest eigenvalue $\lambda_{\min}(\Gamma)$.
	If $C$ is a clique in $\Gamma$, then the order of the clique {satisfies} $$|V(C)| \leq 1 + \frac{k}{-\lambda_{\min}(\Gamma)}.$$
\end{lem}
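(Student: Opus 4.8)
The plan is to work inside the Bose--Mesner algebra of $\Gamma$ and to pair the characteristic vector of the clique with the primitive idempotent belonging to $\lambda_{\min}(\Gamma)$, rather than with $A-\lambda_{\min}(\Gamma)I$ or a two-block quotient matrix. The latter choices only encode the inequalities $\theta_j\geq\lambda_{\min}(\Gamma)$ and, as a short computation shows, produce bounds that depend on the number of vertices of $\Gamma$ and are therefore useless here; so they must be avoided. Write $A$ for the adjacency matrix of $\Gamma$ on $n$ vertices, let $k=\theta_0>\theta_1>\cdots>\theta_d=\lambda_{\min}(\Gamma)$ be its distinct eigenvalues with multiplicities $m_0,\dots,m_d$ and primitive idempotents $E_0,\dots,E_d$ (the orthogonal projections onto the eigenspaces), and let $A_0=I,A_1=A,\dots,A_D$ be the distance matrices. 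The structural input I would take from \cite{BCN} is that the $A_i$ form a basis of the Bose--Mesner algebra, that each $E_j$ lies in this algebra, and that each $E_j$ is positive semidefinite.

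First I would expand the idempotent of the smallest eigenvalue in the distance basis, $E_d=\sum_{i=0}^{D}c_iA_i$, and determine only the first two coefficients by taking traces. Using $\operatorname{tr}(A_i)=0$ for $i\geq 1$ and $\operatorname{tr}(I)=n$ gives $c_0=m_d/n$; using $E_dA=\lambda_{\min}(\Gamma)E_d$, so that $\operatorname{tr}(E_dA)=m_d\lambda_{\min}(\Gamma)$, together with the orthogonality relation $\operatorname{tr}(A_iA)=\delta_{i1}nk$ gives $c_1=m_d\lambda_{\min}(\Gamma)/(nk)$. The remaining coefficients $c_2,\dots,c_D$ will not be needed.

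Next, let $\chi$ be the characteristic vector of $C$ and set $s=|V(C)|$. Since $C$ is a clique, any two of its vertices are at distance $0$ or $1$, so $\chi^{\top}A_i\chi=0$ for every $i\geq 2$, while $\chi^{\top}I\chi=s$ and $\chi^{\top}A\chi=s(s-1)$. Hence only $c_0$ and $c_1$ survive and
$$\chi^{\top}E_d\chi=\frac{m_d s}{n}\Bigl(1+\frac{\lambda_{\min}(\Gamma)(s-1)}{k}\Bigr).$$
The positive semidefiniteness of $E_d$ forces $\chi^{\top}E_d\chi\geq 0$, and since $m_d s/n>0$ we obtain $1+\lambda_{\min}(\Gamma)(s-1)/k\geq 0$. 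Recalling that $\lambda_{\min}(\Gamma)<0$, this rearranges to $s-1\leq k/(-\lambda_{\min}(\Gamma))$, which is precisely $|V(C)|\leq 1+k/(-\lambda_{\min}(\Gamma))$.

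The conceptual heart of the argument, and the only genuinely delicate point, is the decision to test against the idempotent $E_d$ of the smallest eigenvalue: what makes this succeed is exactly that a clique meets only the relations $A_0$ and $A_1$, which collapses $\chi^{\top}E_d\chi$ into a single affine expression in $\theta_d=\lambda_{\min}(\Gamma)$ and thereby removes the dependence on $n$ that plagues the more naive spectral estimates. Everything else is a routine trace computation, so I expect no further obstacle.
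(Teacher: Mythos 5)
Your proof is correct: the trace computations giving $c_0=m_d/n$ and $c_1=m_d\lambda_{\min}(\Gamma)/(nk)$, the observation that a clique meets only the relations $A_0$ and $A_1$ so that $\chi^{\top}E_d\chi$ collapses to $\frac{m_d s}{n}\bigl(1+\frac{\lambda_{\min}(\Gamma)(s-1)}{k}\bigr)$, and the use of positive semidefiniteness of $E_d$ together with $\lambda_{\min}(\Gamma)<0$ are all sound. The paper gives no proof of this lemma --- it cites \cite[Proposition 4.4.6]{BCN} --- and your argument is exactly the standard Delsarte idempotent proof found in that reference, so the approaches coincide.
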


The following theorem is a well-known result for the smallest eigenvalue of local graphs of distance-regular graphs, and is due to Terwilliger \cite{Ternotes}.

\begin{thm}[{\cite[Theorem 4.4.3]{BCN}}]\label{bcnth}
	If $\Gamma$ is a distance-regular graph of diameter $D\geq 3$ with second largest eigenvalue $\theta$, then $b^+:=\frac{b_1}{\theta+1}>0$, and the local graph of $\Gamma$ at each vertex has smallest eigenvalue at least $-1-b^+$.
\end{thm}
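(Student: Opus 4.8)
The plan is to use the orthogonal projection $E$ onto the eigenspace of the second largest eigenvalue $\theta=\theta_1$, exploiting that $E\succeq 0$. Since $E$ is a polynomial in the adjacency matrix $A$, distance-regularity forces its entries to depend only on distance: $E_{yz}=\tfrac{m}{n}w_{d(y,z)}$, where $m$ is the multiplicity of $\theta$, $n=|V(\Gamma)|$, and $w_0=1,w_1,w_2,\dots$ is the cosine sequence. From $AE=\theta E$ I will read off the three-term recurrence $c_i w_{i-1}+a_i w_i+b_i w_{i+1}=\theta w_i$; its first two instances give $w_1=\theta/k$ and $b_1 w_2=(\theta-a_1)w_1-1$. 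Using $a_1=k-1-b_1$, a short computation will produce the two identities I need, namely $w_1-w_2=\tfrac{(\theta+1)(k-\theta)}{kb_1}$ and $1-w_2=\tfrac{(k-\theta)(b_1+\theta+1)}{kb_1}$.

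For the first assertion I will argue $\theta+1>0$. Because $D\ge 3$, $\Gamma$ is not complete multipartite (those have diameter at most $2$), and a connected graph with exactly one positive eigenvalue must be complete multipartite; hence $\Gamma$ has at least two positive eigenvalues, so $\theta=\theta_1>0$. Since $b_1\ge 1$ (as $D\ge 2$), this yields $b^{+}=b_1/(\theta+1)>0$.

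For the eigenvalue bound, I fix a vertex $x$ and let $\eta=\lambda_{\min}(\Gamma_x)$ with unit eigenvector $v$ on $N(x)$. If $\Gamma_x$ is edgeless then $\eta=0$ and the bound is trivial, so I may assume $\eta<a_1$, whence $v\perp\mathbf 1$. I then consider $g:=\sum_{y\in N(x)}v(y)\,E\hat y$, with $\hat y$ the standard basis vector. Expanding $\|g\|^2=\tfrac{m}{n}\sum_{y,y'}v(y)v(y')w_{d(y,y')}$ and splitting the double sum according to whether $y=y'$, $y\sim y'$, or $y\not\sim y'$ (distances $0,1,2$ inside $N(x)$), and using $v^{\top}A_{\Gamma_x}v=\eta$, $\langle v,\mathbf 1\rangle=0$, $\|v\|^2=1$, gives
\[
\|g\|^2=\tfrac{m}{n}\big[(1-w_2)+(w_1-w_2)\eta\big].
\]
Since $E\succeq 0$ we have $\|g\|^2\ge 0$, and as $w_1-w_2>0$ this rearranges to $\eta\ge -\tfrac{1-w_2}{\,w_1-w_2\,}$. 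Substituting the two identities, the ratio collapses to $\tfrac{b_1+\theta+1}{\theta+1}=1+b^{+}$, so $\eta\ge -1-b^{+}$.

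The main obstacle is conceptual rather than computational: the naive certificates, for instance the quadratic form of $\theta I-A$ (positive semidefinite on $\mathbf 1^{\perp}$), only bound the local Rayleigh quotient from above and therefore cannot control the \emph{smallest} local eigenvalue. The decisive move is to project the local eigenvector into the global $\theta$-eigenspace and use positivity of $E$; the lower bound then appears precisely because the cosine difference $w_1-w_2$ is positive, which is exactly where the hypothesis that $\theta$ is the second largest eigenvalue (so $\theta>-1$) enters. The rest is the bookkeeping that turns $\|g\|^2\ge 0$ into the stated closed form, together with the verification that the extremal local eigenvector may be taken orthogonal to the all-ones vector.
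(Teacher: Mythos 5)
Your proof is correct: the cosine-sequence identities $w_1-w_2=\frac{(\theta+1)(k-\theta)}{kb_1}$ and $1-w_2=\frac{(k-\theta)(b_1+\theta+1)}{kb_1}$, the computation $\frac{n}{m}\|g\|^2=(1-w_2)+(w_1-w_2)\eta\geq 0$, the reduction to $v\perp\mathbf{1}$ (valid since the local graph is $a_1$-regular and $\eta\neq a_1$ once the edgeless case is set aside), and the positivity of $\theta$ via Smith's characterization of connected graphs with one positive eigenvalue all check out and yield exactly $\eta\geq -1-b^+$. The paper itself offers no proof of this statement — it is quoted as \cite[Theorem 4.4.3]{BCN} and attributed to Terwilliger — and your argument (projecting a local eigenvector into the $\theta$-eigenspace and exploiting that the primitive idempotent is positive semidefinite) is essentially the standard proof from that source, so the two agree.
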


\begin{de}
	A distance-regular graph $\Gamma$ of diameter $D$ has \emph{classical parameters} $(D, b, \alpha, \beta)$ if the intersection numbers of $\Gamma$ satisfy 
		\begin{equation}\label{drgc}
		c_i=\Big[\substack{i\\ \\ 1}\Big]_b(1+\alpha\Big[\substack{i-1\\ \\ 1}\Big]_b)
	\end{equation} and
	\begin{equation}\label{drgb}
		b_i=(\Big[\substack{D\\ \\ 1}\Big]_b-\Big[\substack{i\\ \\ 1}\Big]_b)(\beta-\alpha\Big[\substack{i\\ \\ 1}\Big]_b)
	\end{equation} for $0\leq i\leq D$, where $\Big[\substack{i\\ \\ 1}\Big]_b=b^{i-1}+b^{i-2}+\dots+b+1$.
\end{de}
	There are many examples of distance-regular graphs with classical parameters such as the Johnson graphs, the Hamming graphs, the Grassmann graphs, the bilinear forms graphs and so on. For more comprehensive background on distance-regular graphs, we refer readers to \cite{BCN,DKT2016}.

 Brouwer et al. \cite[Proposition 6.2.1]{BCN} showed that the parameter $b$ is an integer such that $b\notin\{0,-1\}$. Weng \cite{Weng99} classified the distance-regular graphs with classical parameters $(D,b,\alpha,\beta)$ for $b\leq-2$. Terwilliger, referred to \cite[Theorem 6.1.1]{BCN}, has classified the distance-regular graphs with classical parameters $(D, b, \alpha, \beta)$, where $b=1$.
 Therefore, we focus on $b\geq2$ in this paper.

 The following lemma  gives explicit formulas for eigenvalues of distance-regular graphs with classical parameters. 
 
 \begin{lem}[{\cite[Corollary 8.4.2]{BCN}}]\label{ei}
 	The eigenvalues of the distance-regular graph $\Gamma$ with classical parameters $(D, b, \alpha, \beta)$ are
 	$\lambda_i:=\Big[\substack{D-i\\ \\ 1}\Big]_b(\beta-\alpha\Big[\substack{i\\ \\ 1}\Big]_b)-\Big[\substack{i\\ \\ 1}\Big]_b=\frac{b_i}{b^i}-\Big[\substack{i\\ \\ 1}\Big]_b$ for $0\leq i\leq D$.
 \end{lem}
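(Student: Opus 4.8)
\emph{Setup and the easy equality.} Throughout, abbreviate $g_i := \Big[\substack{i\\ \\ 1}\Big]_b = \frac{b^i-1}{b-1}$, so that by definition $c_i = g_i(1+\alpha g_{i-1})$ and $b_i = (g_D - g_i)(\beta - \alpha g_i)$. The very first thing I would record is the elementary $b$-binomial identity $g_D - g_i = \frac{b^D - b^i}{b-1} = b^i g_{D-i}$; substituting it into the formula for $b_i$ gives $b_i = b^i g_{D-i}(\beta - \alpha g_i)$, hence $\frac{b_i}{b^i} = g_{D-i}(\beta-\alpha g_i)$. This immediately shows that the two displayed expressions for $\lambda_i$ coincide, so it remains to prove that the numbers $\lambda_i = g_{D-i}(\beta - \alpha g_i) - g_i$, for $0\le i\le D$, are exactly the eigenvalues of $\Gamma$. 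The plan is to use the classical fact (see \cite{BCN}, Section 4.1) that a distance-regular graph of diameter $D$ has precisely $D+1$ distinct eigenvalues, and that these are exactly the eigenvalues of the $(D+1)\times(D+1)$ tridiagonal intersection matrix $L_1$ whose $i$-th row is $(c_i,a_i,b_i)$ (with $c_0=b_D=0$ and $a_i=k-b_i-c_i$, $k=b_0=g_D\beta$). Thus it suffices to exhibit, for each $j$, a right eigenvector of $L_1$ with eigenvalue $\lambda_j$, and then to check that the $\lambda_j$ are pairwise distinct.

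\emph{Main step: each $\lambda_j$ is an eigenvalue.} For a fixed $\theta$, a right $\theta$-eigenvector of $L_1$ is a \emph{standard (cosine) sequence}: a vector $(u_0,\dots,u_D)$ with $u_0=1$ satisfying $c_i u_{i-1} + a_i u_i + b_i u_{i+1} = \theta u_i$ for $0\le i\le D$. Equivalently, after using $a_i+b_i+c_i=k$, this reads $b_i(u_{i+1}-u_i) - c_i(u_i-u_{i-1}) = (\theta-k)u_i$. The recurrence determines $u_1,\dots,u_D$ from $u_0=1$, and $\theta$ is an eigenvalue exactly when the last equation ($i=D$, $b_D=0$), namely $(\theta-a_D)u_D = c_D u_{D-1}$, is satisfied. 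I would prove that $\theta=\lambda_j$ passes this closing condition by guessing a closed form for $u^{(j)}_i$ as a ratio of $b$-analogues (Gaussian-binomial type expressions in $b,\alpha,\beta$) and verifying the three-term recurrence directly; after inserting $c_i=g_i(1+\alpha g_{i-1})$ and $b_i=b^i g_{D-i}(\beta-\alpha g_i)$, this reduces to a polynomial identity in $b$ that is handled with the routine relations $g_{i+1}=bg_i+1$ and $g_D-g_i=b^i g_{D-i}$. \textbf{This algebraic verification is the main obstacle}: finding the correct closed form of the eigenvector and pushing the Gaussian-binomial cancellation through for general $i$ and $j$ is where the real work lies. (As a sanity check one can confirm the scheme directly in low diameter: for $D=2$ and $\theta=\lambda_1=\beta-\alpha-1$ one gets $\theta-a_1=-\alpha(b+1)$ and $\theta-a_2=-b(\beta-\alpha-1)$, and the closing condition collapses to the identity $(1+\alpha)(\beta-\alpha)=\alpha(\beta-\alpha-1)+\beta$, which holds.)

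\emph{Distinctness and conclusion.} It remains to see that the $D+1$ values $\lambda_j$ are distinct, so that they fill out the entire $(D+1)$-element spectrum of $L_1$. A direct computation using $g_{D-j}=bg_{D-j-1}+1$ and $g_{j+1}=bg_j+1$ gives the clean difference formula
\[
\lambda_j-\lambda_{j+1}=b^{D-j-1}\beta + b^{j} + \alpha\big(g_{D-j-1}-g_j\big),\qquad 0\le j\le D-1 .
\]
Since $b_j=b^j g_{D-j}(\beta-\alpha g_j)>0$ for $0\le j\le D-1$ forces $\beta-\alpha g_j>0$, and since $b^{D-j-1}\ge 1$, the term $b^{D-j-1}\beta$ dominates $\alpha g_j$; hence $\lambda_j-\lambda_{j+1}>b^{j}>0$. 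Therefore $\lambda_0>\lambda_1>\cdots>\lambda_D$, giving $D+1$ distinct eigenvalues of the $(D+1)\times(D+1)$ matrix $L_1$. Because $L_1$ has exactly $D+1$ eigenvalues, these are all of them, and by the standard identification they are precisely the eigenvalues of $\Gamma$, completing the proof.
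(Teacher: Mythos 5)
The paper offers no proof of this lemma at all: it is quoted verbatim as Corollary 8.4.2 of \cite{BCN}, so the only question is whether your standalone argument is complete. Your framework is the right one (identify the spectrum of $\Gamma$ with that of the $(D+1)\times(D+1)$ tridiagonal intersection matrix $L_1$, show each $\lambda_j$ satisfies the closing condition of the three-term recurrence, then conclude by counting once the $\lambda_j$ are shown to be pairwise distinct), your identity $b_i=b^ig_{D-i}(\beta-\alpha g_i)$ settles the equality of the two displayed expressions, and your difference formula $\lambda_j-\lambda_{j+1}=b^{D-j-1}\beta+b^j+\alpha(g_{D-j-1}-g_j)$ is correct. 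But the proposal has a genuine gap at precisely the step you yourself label as ``where the real work lies'': you never produce the closed form of the standard sequence $u^{(j)}$, never verify the recurrence $c_iu_{i-1}+a_iu_i+b_iu_{i+1}=\lambda_j u_i$, and never check the closing condition $(\lambda_j-a_D)u_D=c_Du_{D-1}$ for general $D$ and $j$. Announcing that one would guess an eigenvector and push a Gaussian-binomial cancellation through is a plan, not a proof; the $D=2$ computation is only a consistency check of two instances. Since showing that every $\lambda_j$ actually \emph{is} an eigenvalue is the entire content of the lemma, the statement remains unproved as written.

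A secondary issue: your distinctness argument silently assumes $\alpha\geq 0$ (together with $b\geq 1$, $\beta>0$). To get $b^{D-j-1}\beta+\alpha g_{D-j-1}>\alpha g_j$ you bound $b^{D-j-1}\beta\geq\beta>\alpha g_j$ and then drop the term $\alpha g_{D-j-1}$, which requires $\alpha g_{D-j-1}\geq 0$; if $\alpha<0$ (which the positivity of the $c_i$ alone does not exclude, it only forces $\alpha>-1/g_{D-1}$) the inequality needs the extra input $1+\alpha g_{D-1}>0$ coming from $c_D>0$. More seriously, the lemma as cited covers \emph{all} classical parameters, including $b\leq-2$ (such graphs exist; the paper cites Weng's classification of them), and there the natural ordering $\lambda_0>\cdots>\lambda_D$ fails --- indeed the paper itself asserts that ordering only for $b\geq 1$ --- so any positivity-based distinctness argument cannot work in that generality. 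Within the regime the paper actually uses ($b\geq 2$, where $\alpha\geq0$ can be derived) this part is repairable, but the main step above must still be carried out before the proof stands.
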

 Note that, if $b\geq 1$, then these eigenvalues have natural ordering $k=\lambda_0>\lambda_1>\dots>\lambda_D$.
Lemma \ref{dels} and Lemma \ref{ei} imply that the Delsarte bound for distance-regular graphs with classical parameters is $1+\beta$ if $b\geq 1$. We also have
\begin{equation}\label{ie1}
	 \beta-1\geq\alpha\Big[\substack{D-1\\ \\ 1}\Big]_b,
\end{equation}
 since $a_D=k-b_D-c_D=\Big[\substack{D\\ \\ 1}\Big]_b\beta-0-\Big[\substack{D\\ \\ 1}\Big]_b(1+\alpha\Big[\substack{D-1\\ \\ 1}\Big]_b)\geq0$.	

\subsection{Hoffman graphs and special matrices}\label{hoffsec1}
Now we give definitions and preliminaries of Hoffman graphs. For more details, see \cite{JKMT14,KKY,KYY19,WN1995}.
\begin{de}\label{hoffmangraph}
	A \emph{Hoffman graph} $\ho$ is a pair of $(H, \ell)$, where $H=(V,E)$ is a graph and $\ell:V\rightarrow\{{\rm fat},{\rm slim}\}$ is a labeling map on $V$, such that no two vertices with label \emph{fat} are adjacent and every vertex with label \emph{fat} has at least one neighbor with label \emph{slim}.
\end{de}

The vertices with label {fat} (resp. {slim}) are called \emph{fat vertices} (resp. \emph{slim vertices}). We denote the set of slim (resp. fat) vertices by $V_{{\rm slim}}(\ho)$ (resp.  $V_ {\rm fat}(\ho)$). For a vertex $x$ of $\mathfrak{h}$, the set of slim (resp. fat) neighbors of $x$ in $\mathfrak{h}$ is denoted by $N_{\mathfrak{h}}^{{\rm slim}}(x)$ (resp. $N_{\mathfrak{h}}^{{\rm fat}}(x)$).
A Hoffman graph is called \emph{$t$-fat} if every slim vertex has at least $t$ fat neighbors.
The \emph{slim graph} of $\ho$ is the subgraph of $H$ induced by $V_{\rm slim}(\ho)$. Note that the slim graph of a Hoffman graph is an ordinary graph.

\begin{de}
 A Hoffman graph $\ho_1= (H_1, \ell_1)$ is called an \emph{induced Hoffman subgraph} of $\ho= (H, \ell)$, if $H_1$ is an induced subgraph of $H$ and $\ell_1(x) = \ell(x)$ for all vertices $x$ of $H_1$.
\end{de}

\begin{de}Two Hoffman graphs $\ho= (H, \ell)$ and $\ho^\prime=(H^\prime, \ell^\prime)$ are \emph{isomorphic} if there exists an isomorphism from $H$ to $H^\prime$ which preserves the labeling.
\end{de}

For a Hoffman graph $\ho= (H, \ell)$, there exists a matrix $D$ such that the adjacency matrix $A$ of $H$ satisfies 	$A= \footnotesize \begin{pmatrix}A_{\rm slim}  & D^T\\
	D  & O	\end{pmatrix}$,
where $A_{\rm slim}$ is the adjacency matrix of the slim graph of $\mathfrak{h}$. The  symmetric matrix $S(\ho):=A_{\rm slim}-D^TD$ is called the \emph{special matrix} of $\ho$. The
{eigenvalues of $\ho$}  are the eigenvalues of $S(\ho)$, and the smallest eigenvalue of $\ho$ is denoted by $\lambda_{\min}(\ho)$.
Woo and Neumaier\cite[Corollary 3.3]{WN1995} showed that $\lambda_{\min}(\go)\geq\lambda_{\min}(\ho)$ if $\go$ is an induced Hoffman subgraph of $\ho$.

 The $t$-fat Hoffman graph with one slim vertex and $t$ fat vertices, 
denoted by $\ho^{(t)}$,
is the unique Hoffman graph with the special matrix $(-t)$.
However, in general Hoffman graphs are not determined by their special matrices. For example,
\raisebox{-1ex}{\begin{tikzpicture}[scale=0.3]
		
		\tikzstyle{every node}=[draw,circle,fill=black,minimum size=10pt,scale=0.3,
		inner sep=0pt]
		
		\draw (-2.2,0) node (1f1) [label=below:$$] {};
		\draw (-1.6,0) node (1f2) [label=below:$$] {};
		\draw (-1.0,0) node (1f3) [label=below:$$] {};

		\tikzstyle{every node}=[draw,circle,fill=black,minimum size=5pt,scale=0.3,
		inner sep=0pt]

		\draw (-1.9,1) node (1s1) [label=below:$$] {};
		\draw (-1.3,1) node (1s2) [label=below:$$] {};
		
		\draw (1f1) -- (1s1) -- (1f2);
		\draw (1f2) -- (1s2) -- (1f3);
\end{tikzpicture}} and \hspace{-0.08cm}
\raisebox{-1ex}{\begin{tikzpicture}[scale=0.3]
		\tikzstyle{every node}=[draw,circle,fill=black,minimum size=10pt,scale=0.3,
		inner sep=0pt]

		\draw (1.5,0) node (3f1) [label=below:$$] {};
		\draw (1.5,1) node (3f2) [label=below:$$] {};

		\tikzstyle{every node}=[draw,circle,fill=black,minimum size=5pt,scale=0.3,
		inner sep=0pt]

		\draw (1,0.5) node (3s1) [label=below:$$] {};
		\draw (2,0.5) node (3s2) [label=below:$$] {};
		
		\draw (3s1) -- (3s2);
		\draw (3f1) -- (3s1) -- (3f2) -- (3s2) -- (3f1);
\end{tikzpicture}}
are non-isomorphic Hoffman graphs with the same special matrix.



Let $\ho$ be a Hoffman graph.
Let $G(\ho,p)$ denote the graph obtained from $\ho$ by replacing fat vertex $f$ by a slim $p$-clique $K^f$ and joining all the neighbors of $f$ in $\ho$ with all the vertices in $K^f$ for each fat vertex $f$.
For the relation between  $\lambda_{\min}(\ho)$ and $\lambda_{\min}(G(\ho,p))$, Hoffman and Ostrowski showed the following result. For a proof, we refer to \cite[Theorem 2.14]{JKMT14} and \cite[Theorem 3.4]{KYY18}.
\begin{thm}\label{Ostrowski} 
	If  $\ho$ is a Hoffman graph and $p$ a positive integer, then
	$\lambda_{\min}(G(\ho,p))\geq \lambda_{\min}(\ho)$
	and
	$\lim_{p \rightarrow \infty}\lambda_{\min}(G(\ho,p))= \lambda_{\min}(\ho).$
	
\end{thm}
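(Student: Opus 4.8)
The plan is to work directly with the adjacency matrix $A_p:=A(G(\ho,p))$ and to exploit the large automorphism group of $G(\ho,p)$. Write $|V_{\mathrm{slim}}(\ho)|=n$ and $|V_{\mathrm{fat}}(\ho)|=m$, and let $A_{\mathrm{slim}}$ and $D$ be the blocks appearing in the adjacency matrix of $\ho$. Indexing the $p$ vertices of the clique $K^{f}$ by pairs $(f,i)$ with $i\in[p]$, the matrix $A_p$ takes the form
\[
A_p=\begin{pmatrix} A_{\mathrm{slim}} & D^{T}\otimes \mathbf 1_p^{T}\\ D\otimes \mathbf 1_p & I_m\otimes (J_p-I_p)\end{pmatrix},
\]
where $\mathbf 1_p$ is the all-ones vector and $J_p$ the all-ones matrix. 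Since each symmetric group permuting the vertices of a fixed clique $K^{f}$ is an automorphism, I would split $\mathbb R^{n+mp}=W_0\oplus W_1$, where $W_1$ consists of the vectors constant on every clique and $W_0=W_1^{\perp}$ consists of the vectors that vanish on the slim part and sum to $0$ on each clique. Both subspaces are $A_p$-invariant, so the spectrum of $A_p$ is the union of the two restricted spectra.

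On $W_0$ a one-line computation gives $A_px=-x$, so $W_0$ contributes only the eigenvalue $-1$ (with multiplicity $m(p-1)$). The restriction to $W_1$ is the crux: writing a vector of $W_1$ as $(u,c)$ with $u\in\mathbb R^{n}$ on the slim part and $c\in\mathbb R^{m}$ recording the common value on each clique, the Rayleigh quotient of $A_p$ becomes
\[
\frac{u^{T}A_{\mathrm{slim}}u+2p\,u^{T}D^{T}c+(p^{2}-p)\|c\|^{2}}{\|u\|^{2}+p\|c\|^{2}}.
\]
Setting $\theta:=\lambda_{\min}(\ho)=\lambda_{\min}(S(\ho))$ and using $S(\ho)=A_{\mathrm{slim}}-D^{T}D\succeq\theta I$, hence $u^{T}(A_{\mathrm{slim}}-\theta I)u\ge\|Du\|^{2}$, I would complete the square to rewrite the numerator-minus-$\theta$-times-denominator as $\|Du+pc\|^{2}-p(1+\theta)\|c\|^{2}$. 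Because the presence of a fat vertex forces some diagonal entry of $S(\ho)$ to be at most $-1$ (a fat vertex has a slim neighbour, whose diagonal entry of $D^{T}D$, the number of its fat neighbours, is at least $1$), one gets $\theta\le-1$, so $-p(1+\theta)\ge0$ and the whole expression is nonnegative. Together with the eigenvalue $-1\ge\theta$ on $W_0$ this yields $\lambda_{\min}(G(\ho,p))\ge\theta$; the degenerate case $m=0$ is immediate since then $G(\ho,p)$ is the slim graph and $S(\ho)=A_{\mathrm{slim}}$.

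For the limit it remains to produce test vectors realizing $\theta$ asymptotically. Taking a unit $\theta$-eigenvector $u_0$ of $S(\ho)$ and the vector $(u_0,-\tfrac1p Du_0)\in W_1$, I would evaluate the Rayleigh quotient above; with $d:=\|Du_0\|^{2}$ and $u_0^{T}A_{\mathrm{slim}}u_0=\theta+d$ it simplifies to $(\theta-d/p)/(1+d/p)$, which tends to $\theta$ as $p\to\infty$. Hence $\limsup_{p}\lambda_{\min}(G(\ho,p))\le\theta$, and combined with the uniform lower bound this gives $\lim_{p\to\infty}\lambda_{\min}(G(\ho,p))=\theta$. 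The hard part will be the $W_1$ estimate: one must identify the right reduced quadratic form, insert the operator inequality $A_{\mathrm{slim}}-\theta I\succeq D^{T}D$ at exactly the right place, and verify $\theta\le-1$ so that the completed square has the correct sign. The scaling $-\tfrac1p Du_0$ for the test vector, predicted by the generalized eigenvalue relation $c=(\lambda-p+1)^{-1}Du$ valid on $W_1$, is precisely what makes the limit come out sharp.
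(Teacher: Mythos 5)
Your argument is correct; I checked the block form of $A(G(\ho,p))$, the invariance of the orthogonal splitting $W_0\oplus W_1$, the identity $A_px=-x$ on $W_0$, the reduced Rayleigh quotient on $W_1$, the chain of inequalities ending in $\|Du+pc\|^{2}-p(1+\theta)\|c\|^{2}\ge 0$ (which indeed requires $\theta\le-1$, guaranteed exactly as you say whenever $V_{\rm fat}(\ho)\neq\emptyset$, the case $V_{\rm fat}(\ho)=\emptyset$ being trivial), and the test-vector computation giving $(\theta-d/p)/(1+d/p)\to\theta$. Be aware, though, that the paper does not contain its own proof of this theorem: it only cites \cite[Theorem 2.14]{JKMT14} and \cite[Theorem 3.4]{KYY18}, so the comparison must be with those proofs. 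They rest on the same symmetry you exploit --- the partition of $V(G(\ho,p))$ into slim singletons and the cliques $K^f$ is equitable --- but they proceed through the non-symmetric quotient matrix $\begin{pmatrix} A_{\rm slim} & pD^{T}\\ D & (p-1)I \end{pmatrix}$, eliminate the clique block via $c=(\lambda-p+1)^{-1}Du$ (the very relation you cite as motivation for your test vector), and then obtain the limit from monotonicity of $\lambda_{\min}(G(\ho,p))$ in $p$ (interlacing, since $G(\ho,p)$ is an induced subgraph of $G(\ho,p+1)$) together with continuity of the eigenvalues of the parameter-dependent matrix $A_{\rm slim}+\tfrac{p}{\lambda-p+1}D^{T}D$ as $p\to\infty$, which is where Ostrowski's perturbation theorem enters. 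Your version replaces that perturbation-theoretic step by the completed square (for the uniform lower bound) and by explicit test vectors (for the upper bound), which makes the proof self-contained, more elementary, and quantitative: it yields the two-sided estimate $\theta\le\lambda_{\min}(G(\ho,p))\le(\theta-d/p)(1+d/p)^{-1}$ with $d=\|Du_0\|^{2}$. The one thing the classical route gives that yours does not is the monotonicity of $\lambda_{\min}(G(\ho,p))$ in $p$, but that is not part of the statement being proved.
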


\subsection{Decompositions of Hoffman graphs}\label{hoffsec2}
Now, we introduce decompositions of Hoffman graphs and line Hoffman graphs.

For two vertices $x$ and $y$, we denote by $x\sim y$ if they are adjacent.
Let $W$ be a subset of $V_{\rm slim}(\ho)$.
 The {\em induced Hoffman subgraph of $\ho$  generated by $W$} is the Hoffman subgraph of $\ho$ induced by $W \cup \{f \in V_{\rm fat}(\ho)$ $|$ $f \sim w$ for some $w \in W \}$.

\begin{de}\label{directsummatrix}
	A family of Hoffman graphs $\{\ho^i\}_{i=1}^r$ is a \emph{decomposition} of a Hoffman graph $\ho$, if there exists a partition $\big\{V_{\rm slim}^1(\ho),V_{\rm slim}^2(\ho),\ldots,V_{\rm slim}^r(\ho) \big\}$ of $V_{\rm slim}(\ho)$ such that  the induced Hoffman subgraph generated by $V_{\rm slim}^i(\ho)$ is $\ho^i$ for $i\in[r]$ and
	$$S(\ho)=
	\begin{pmatrix}
		S(\ho^1) &&&\\
		& S(\ho^2)&&\\
		&&\ddots&\\
		&&&S(\ho^r)
	\end{pmatrix}
	$$ is a block diagonal matrix with respect to this partition of $V_{\rm slim}(\ho)$.
\end{de}

If a Hoffman graph $\ho$ has a decomposition $\{\ho^i\}_{i=1}^r$, then we write $\ho=\uplus_{i=1}^r\ho^i$.
A Hoffman graph $\ho$ is said to be \emph{decomposable}, if $\ho$ has a decomposition $\{\ho^i\}_{i=1}^r$ with $r\geq2$. Otherwise, $\ho$ is called \emph{indecomposable}. Note that $\ho$ is decomposable if and only if its special matrix $S(\ho)$ is a block diagonal matrix with at least two blocks.
Here we give a combinatorial way to describe the decomposability of Hoffman graphs.

\begin{lem}[{\cite[Lemma 2.11]{KYY19}}]\label{combi}                                                                   
	If $\ho$ is a Hoffman graph and $\{\ho^i\}_{i=1}^r$ a family of induced Hoffman subgraphs of $\ho$, then $\{\ho^i\}_{i=1}^r$ is a decomposition of $\ho$
	, if it satisfies the following conditions:
	\begin{enumerate}
		\item $V(\ho)=\bigcup_{i=1}^rV(\ho^i)$,
		\item $V_{\rm slim}(\ho^i)\cap V_{\rm slim}(\ho^j)=\emptyset$ for $i\neq j$,
		\item if $x \in V_{\rm slim}(\ho^i),~f \in V_{\rm fat}(\ho)$ and $x\sim f$, then $f\in V_{\rm fat}(\ho^i)$,
		\item if $x \in V_{\rm slim}(\ho^i)$, $y \in V_{\rm slim}(\ho^j)$ and $i\neq j$, then $x$ and $y$ have at most one common fat neighbor, and they have one if and only if they are adjacent.
	\end{enumerate}
\end{lem}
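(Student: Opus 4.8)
The plan is to verify directly the three ingredients demanded by Definition~\ref{directsummatrix}: that the sets $V_{\rm slim}(\ho^i)$ partition $V_{\rm slim}(\ho)$, that the induced Hoffman subgraph generated by $V_{\rm slim}(\ho^i)$ is precisely $\ho^i$, and that $S(\ho)$ is block diagonal with $i$-th block $S(\ho^i)$ relative to this partition. The first point is immediate: condition (i) gives $V_{\rm slim}(\ho)=\bigcup_i V_{\rm slim}(\ho^i)$ after restricting to slim vertices, and condition (ii) says these sets are pairwise disjoint, so $\{V_{\rm slim}(\ho^i)\}_{i=1}^r$ is a partition; I set $V_{\rm slim}^i(\ho):=V_{\rm slim}(\ho^i)$.

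Next I would identify the generated subgraph with $\ho^i$. Both are induced Hoffman subgraphs of $\ho$ with slim vertex set $V_{\rm slim}(\ho^i)$, so it suffices to match their fat vertices. On one hand, condition (iii) says every fat neighbour in $\ho$ of a slim vertex of $\ho^i$ already lies in $V_{\rm fat}(\ho^i)$; this shows the fat vertex set of the generated subgraph is contained in $V_{\rm fat}(\ho^i)$. On the other hand, since $\ho^i$ is itself a Hoffman graph, every fat vertex of $\ho^i$ has a slim neighbour in $\ho^i$, i.e. a neighbour in $V_{\rm slim}(\ho^i)$, so $V_{\rm fat}(\ho^i)$ is contained in the fat vertex set of the generated subgraph. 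Hence the two coincide and the generated subgraph equals $\ho^i$.

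The heart of the argument is the block-diagonal structure. Writing $A=\begin{pmatrix}A_{\rm slim}&D^T\\ D&O\end{pmatrix}$, the entry of the special matrix for slim vertices $x,y$ is $S(\ho)_{x,y}=(A_{\rm slim})_{x,y}-(D^TD)_{x,y}$, where $(D^TD)_{x,y}$ counts the common fat neighbours of $x$ and $y$ in $\ho$. For $x\in V_{\rm slim}(\ho^i)$ and $y\in V_{\rm slim}(\ho^j)$ with $i\neq j$, condition (iv) is exactly what makes this entry vanish: if $x\sim y$ they have precisely one common fat neighbour, giving $1-1=0$, and if $x\not\sim y$ they have none, giving $0-0=0$. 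Thus the off-diagonal blocks are zero. For $x,y$ in the same part $V_{\rm slim}(\ho^i)$ (including $x=y$), induced-subgraph adjacency gives $(A_{\rm slim})_{x,y}=(A^i_{\rm slim})_{x,y}$, while condition (iii) forces every common fat neighbour of $x,y$ in $\ho$ to lie in $V_{\rm fat}(\ho^i)$, and conversely every common fat neighbour in $\ho^i$ is one in $\ho$ since $\ho^i$ is induced; hence the two counts agree. Therefore $S(\ho)_{x,y}=S(\ho^i)_{x,y}$, so the $i$-th diagonal block is $S(\ho^i)$, completing the verification.

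I expect the only genuinely delicate step to be the off-diagonal vanishing via condition (iv): one must read the clause ``they have at most one common fat neighbour, and they have one if and only if they are adjacent'' as producing, in each adjacency case, a common-fat-neighbour count that cancels the adjacency entry of $A_{\rm slim}$ exactly. Everything else is routine bookkeeping with induced Hoffman subgraphs and the definition of the special matrix $S(\ho)=A_{\rm slim}-D^TD$.
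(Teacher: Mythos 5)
Your proof is correct: the paper itself contains no proof of Lemma \ref{combi} (it is quoted from \cite[Lemma 2.11]{KYY19}), and your argument is the natural direct verification of Definition \ref{directsummatrix} — conditions (i)--(ii) give the partition of $V_{\rm slim}(\ho)$, condition (iii) together with the Hoffman-graph requirement that every fat vertex has a slim neighbor identifies each $\ho^i$ with the induced Hoffman subgraph generated by $V_{\rm slim}(\ho^i)$, and conditions (iii)--(iv) make the diagonal blocks of $S(\ho)=A_{\rm slim}-D^TD$ equal to $S(\ho^i)$ and the off-diagonal blocks vanish. In particular, you correctly handle the two delicate points: the cancellation $1-1=0$ versus $0-0=0$ in the off-diagonal blocks, and the fact that common-fat-neighbor counts inside a part agree between $\ho$ and $\ho^i$ (including the diagonal entries $x=y$).
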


\begin{de}\label{line}
	Let $\Go$ be a family of pairwise non-isomorphic Hoffman graphs. A Hoffman graph $\ho$ is called a \emph{$\Go$-line Hoffman graph} if there exists a Hoffman graph $\ho^\prime$ satisfying the following conditions:
	\begin{enumerate}
		\item $\ho^\prime$ has $\ho$ as an induced Hoffman subgraph,
		\item $\ho^\prime$ has the same slim graph as $\ho$,
		\item $\ho^\prime=\uplus_{i=1}^{r}\ho_i^\prime$, where $\ho_i^\prime$ is isomorphic to an induced Hoffman subgraph of a Hoffman graph in $\Go$ for $i\in[r]$.
	\end{enumerate}
\end{de}


\subsection{Forbidden matrices}\label{Forbiddenmatrice}

In this subsection, we introduce a structural theory for $t$-fat Hoffman graphs with smallest eigenvalue at least $-t-1$. For more details, we refer readers to  \cite{KYY19}



		Let $t$ and $a$ be integers, where $t>0$. 
	Let $m_{1,a}(t):=(-t+a)$, 
	$m_{2,a}(t):= \footnotesize \begin{pmatrix}-t & a \\ a & -t	\end{pmatrix}$, 
	$m_{3,a}(t):=\footnotesize \begin{pmatrix}	-t-1 & a \\	a & -t	\end{pmatrix}$,
	$m_{4,a}(t):= \footnotesize \begin{pmatrix}-t-1 & a \\ a & -t-1	\end{pmatrix}$, 
		$m_{5}(t):=\footnotesize\begin{pmatrix}-t & -1 & -1\\-1 & -t & -1 \\-1 & -1 & -t\end{pmatrix}$,
	$m_{6}(t):=\footnotesize\begin{pmatrix}-t & 1 & 1\\1 & -t & -1 \\1 & -1 & -t\end{pmatrix}$,
	$m_7(t):=\footnotesize\begin{pmatrix}-t & 0 & 1\\0 & -t & -1 \\1 & -1 & -t\end{pmatrix}$,
		$m_8(t):=\footnotesize\begin{pmatrix}-t & 0 & 1\\0 & -t & 1 \\1 & 1 & -t\end{pmatrix}$, and
		$m_9(t):=\footnotesize\begin{pmatrix}-t & 0 & -1\\0 & -t & -1 \\-1 & -1 & -t\end{pmatrix}$.

				
		

				



 The set of irreducible symmetric matrices $\mathcal{M}(t)$ is the union of the sets $M_1(t),M_2(t)$, and $M_3(t)$, where
		$M_1(t) :=\{m_{1,a}(t), m_{2,a}(t) \mid a=-2,-3,\ldots\}$,  
		$M_2(t) :=\{m_{3,a}(t), m_{4,a}(t) \mid a=1,-1,-2,\ldots\}$,
		$M_3(t) :=\{m_{5}(t),m_{6}(t),m_{7}(t),m_{8}(t),m_{9}(t)\}$.


Note that $\lambda_{\min}(m_{1,a}(t))=-t+a,~\lambda_{\min}(m_{2,a}(t))=-t-|a|$, $ \lambda_{\min}(m_{3,a}(t))=-t-\frac{1+\sqrt{1+4a^2}}{2}$,  $\lambda_{\min}(m_{4,a}(t))=-t-1-|a|$, $\lambda_{\min}(m_{5}(t))=\lambda_{\min}(m_{6}(t))=-t-2$, and $\lambda_{\min}(m_{7}(t))=\lambda_{\min}(m_{8}(t))=\lambda_{\min}(m_{9}(t))=-t-\sqrt{2}$.
It means that the smallest eigenvalue of each matrix in $\mathcal{M}(t)$ is at most $-t-\sqrt{2}$. Therefore, the following result can be derived from Theorem \ref{Ostrowski}.


\begin{pro}[{\cite[Proposition 3.3]{KYY19}}]\label{epsilon}
	Let $\ho$ be a Hoffman graph with special matrix in $\mathcal{M}(t)$.
	For fixed $\epsilon>0$, there exists a positive integer $p:=p(\epsilon,t)$ such that $\lambda_{\min}(G(\ho,p))< -t-\sqrt{2}+\epsilon$.
\end{pro}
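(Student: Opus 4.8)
The plan is to read this off directly from Theorem \ref{Ostrowski} together with the eigenvalue computations recorded immediately before the statement, so no new machinery is needed. First I would use that, by definition, the eigenvalues of $\ho$ are exactly the eigenvalues of its special matrix $S(\ho)$. Since $S(\ho)\in\mathcal{M}(t)$, the listed values of $\lambda_{\min}$ for $m_{1,a}(t),\ldots,m_9(t)$ apply: each of them is at most $-t-\sqrt{2}$ (with equality only for $m_7(t),m_8(t),m_9(t)$, and strictly smaller for all the others). Hence $\lambda_{\min}(\ho)=\lambda_{\min}(S(\ho))\le -t-\sqrt{2}$.

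Next I would apply Theorem \ref{Ostrowski} to the fixed Hoffman graph $\ho$, obtaining $\lim_{p\to\infty}\lambda_{\min}(G(\ho,p))=\lambda_{\min}(\ho)$. Write $L:=\lambda_{\min}(\ho)$. The previous step gives $L\le -t-\sqrt{2}$, and because $\epsilon>0$ we have the strict inequality $L\le -t-\sqrt{2}<-t-\sqrt{2}+\epsilon$; that is, the threshold $-t-\sqrt{2}+\epsilon$ strictly exceeds the limit $L$. It then remains only to convert this into the existence of a witness $p$: setting $\delta:=(-t-\sqrt{2}+\epsilon)-L>0$ and invoking the definition of the limit, there is a positive integer $p_0=p_0(\epsilon,t)$ with $|\lambda_{\min}(G(\ho,p))-L|<\delta$ for all $p\ge p_0$, and any such $p$ satisfies $\lambda_{\min}(G(\ho,p))<L+\delta=-t-\sqrt{2}+\epsilon$. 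Equivalently, if no such $p$ existed then every term of the convergent sequence would be at least $-t-\sqrt{2}+\epsilon$, forcing $L\ge -t-\sqrt{2}+\epsilon$ and contradicting $L\le -t-\sqrt{2}$.

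There is essentially no serious obstacle: the entire content is carried by Theorem \ref{Ostrowski}, and the hypothesis $\epsilon>0$ serves only to guarantee the strict gap between the limit $L$ and the threshold $-t-\sqrt{2}+\epsilon$, which is exactly what forces some term of the sequence below the threshold. The one point deserving a line of care is the direction of approach: Theorem \ref{Ostrowski} also gives $\lambda_{\min}(G(\ho,p))\ge\lambda_{\min}(\ho)=L$ for every $p$, so the sequence tends to $L$ from above, and the witnessing value is obtained for all sufficiently large $p$ rather than for small $p$.
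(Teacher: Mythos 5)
Your proposal is correct and is exactly the argument the paper intends: the paper records the smallest eigenvalues of all matrices in $\mathcal{M}(t)$, observes they are at most $-t-\sqrt{2}$, and then states that the proposition "can be derived from Theorem \ref{Ostrowski}" --- which is precisely the limit argument you spell out. Your only addition is making the $\epsilon$--$\delta$ bookkeeping explicit (including the correct observation that the sequence $\lambda_{\min}(G(\ho,p))$ approaches $\lambda_{\min}(\ho)$ from above), which is a faithful expansion rather than a different route.
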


To simplify the narrative, we introduce nine Hoffman graphs in the following.

\begin{figure}[H]
	\centering
	\begin{tikzpicture}
		\draw (-9,5) node {$\mathfrak{h}_{1,-2}$ =};
		\draw (-3,5) node {$\mathfrak{h}_{3,1}$ =};
		\draw (2,5) node {$\mathfrak{h}_{3,-1}$ =};
		\tikzstyle{every node}=[draw,circle,fill=black,minimum size=16pt,
		inner sep=0pt]
		{every label}=[\tiny]
		\draw (-8,4.5) node (1f1) [label=below:$$] {};
		\draw (-7,4.5) node (1f2) [label=below:$$] {};
		\draw (-6, 4.5) node (1f3) [label=below:$$] {};
		\draw (-5,4.5) node (1f4) [label=below:$$] {};
		\draw (-2,4.5) node (2f1) [label=below:$$] {};
		\draw (-1,4.5) node (2f2) [label=below:$$] {};
		\draw (0,4.5) node (2f3) [label=below:$$] {};
		\draw (3,4.5) node (3f1) [label=below:$$] {};
		\draw (3.8,4.5) node (3f2) [label=below:$$] {};		
		\draw (4.6,4.5) node (3f3) [label=below:$$] {};		
		\draw (5.4,4.5) node (3f4) [label=below:$$] {};		
		\draw (6.2,4.5) node (3f5) [label=below:$$] {};

		\tikzstyle{every node}=[draw,circle,fill=black,minimum size=6pt,
		inner sep=0pt]
		{every label}=[\tiny]
		
		\draw (-6.5,5.5) node (1s1) [label=below:$$] {};
		\draw (-1.5,5.5) node (2s1) [label=below:$$] {};
		\draw (-0.5,5.5) node (2s2) [label=below:$$] {};
		\draw (3.8,5.5) node (3s1) [label=below:$$] {};
		\draw (5.8,5.5) node (3s2) [label=below:$$] {};
		
		\draw (1f2) -- (1s1) -- (1f1) -- (1s1) -- (1f3) -- (1s1) --(1f4);
		\draw (2f1) -- (2s1) -- (2f2) -- (2s2) --(2f3) -- (2s1) -- (2s2);
		\draw (3f2) -- (3s1) -- (3f1) -- (3s1) --(3f3) -- (3s1) -- (3s2) -- (3f4) -- (3s2) -- (3f5);
	\end{tikzpicture}
	\vspace{-0.8cm}
\end{figure}

\begin{figure}[H]
	\centering
	\begin{tikzpicture}
		\draw (-9,5) node {$\mathfrak{h}_{4,-2}$ =};
		\draw (-3,5) node {$\mathfrak{h}_{5}$ =};
		\draw (2,5) node {$\mathfrak{h}_{6}$ =};
		\tikzstyle{every node}=[draw,circle,fill=black,minimum size=16pt,
		inner sep=0pt]
		{every label}=[\tiny]
		\draw (-8,4.5) node (1f1) [label=below:$$] {};
		\draw (-7,4.5) node (1f2) [label=below:$$] {};
		\draw (-6, 4.5) node (1f3) [label=below:$$] {};
		\draw (-2,4.5) node (2f1) [label=below:$$] {};
		\draw (0,4.5) node (2f2) [label=below:$$] {};
		\draw (3,4.5) node (3f1) [label=below:$$] {};
		\draw (4,4.5) node (3f2) [label=below:$$] {};		
		\draw (5,4.5) node (3f3) [label=below:$$] {};		
		\draw (6,4.5) node (3f4) [label=below:$$] {};

		\tikzstyle{every node}=[draw,circle,fill=black,minimum size=6pt,
		inner sep=0pt]
		{every label}=[\tiny]
		
		\draw (-6.5,5.5) node (1s1) [label=below:$$] {};
		\draw (-7.5,5.5) node (1s2) [label=below:$$] {};
		\draw (-1.75,5.25) node (2s1) [label=below:$$] {};
		\draw (-0.25,5.25) node (2s2) [label=below:$$] {};
		\draw (-1,5.5) node (2s3) [label=below:$$] {};
		\draw (4.5,5.5) node (3s2) [label=below:$$] {};
		\draw (5.5,5.25) node (3s3) [label=below:$$] {};
		\draw (3.5,5.25) node (3s1) [label=below:$$] {};
		
		\draw (1f1) -- (1s1) -- (1f2) -- (1s2) -- (1f3) -- (1s1) --(1s2) -- (1f1);
		\draw (2f1) -- (2s1) -- (2f2) -- (2s2) --(2s3) -- (2s1) -- (2s2) -- (2f1) -- (2s3) -- (2f2);
		\draw (3f1) -- (3s2) -- (3f2) -- (3s1) -- (3f1) -- (3s1) --(3s3) -- (3f3) -- (3s3) -- (3f4) -- (3s3) -- (3s2) -- (3s1);
	\end{tikzpicture}
	\vspace{-0.8cm}
\end{figure}

\begin{figure}[H]
	\centering
	\begin{tikzpicture}
		\draw (-9,5) node {$\mathfrak{h}_{7}$ =};
		\draw (-4.2,5) node {$\mathfrak{h}_{8}^{(1)}$ =};
		\draw (2,5) node {$\mathfrak{h}_{8}^{(2)}$ =};
		\tikzstyle{every node}=[draw,circle,fill=black,minimum size=16pt,
		inner sep=0pt]
		{every label}=[\tiny]
		\draw (-8,4.5) node (1f1) [label=below:$$] {};
		\draw (-7.2,4.5) node (1f2) [label=below:$$] {};
		\draw (-6.4, 4.5) node (1f3) [label=below:$$] {};
		\draw (-5.6,4.5) node (1f4) [label=below:$$] {};
		\draw (-3.2,4.5) node (2f1) [label=below:$$] {};
		\draw (-2.4,4.5) node (2f2) [label=below:$$] {};
		\draw (-1.6,4.5) node (2f3) [label=below:$$] {};
		\draw (-0.8,4.5) node (2f4) [label=below:$$] {};
		\draw (0,4.5) node (2f5) [label=below:$$] {};
		\draw (0.8, 4.5) node (2f6) [label=below:$$] {};
		\draw (3,4.5) node (3f1) [label=below:$$] {};
		\draw (3.8,4.5) node (3f2) [label=below:$$] {};		
		\draw (4.6,4.5) node (3f3) [label=below:$$] {};		
		\draw (5.4,4.5) node (3f4) [label=below:$$] {};		
		\draw (6.2,4.5) node (3f5) [label=below:$$] {};

		\tikzstyle{every node}=[draw,circle,fill=black,minimum size=6pt,
		inner sep=0pt]
		{every label}=[\tiny]
		
		\draw (-6.8,5.5) node (1s2) [label=below:$$] {};
		\draw (-7.6,5.25) node (1s1) [label=below:$$] {};
		\draw (-6,5.25) node (1s3) [label=below:$$] {};
		\draw (-2.8,5.5) node (2s1) [label=below:$$] {};
		\draw (-1.2,5.5) node (2s2) [label=below:$$] {};
		\draw (0.4,5.5) node (2s3) [label=below:$$] {};
		\draw (3.4,5.25) node (3s1) [label=below:$$] {};
		\draw (5.8,5.25) node (3s3) [label=below:$$] {};
		\draw (4.6,5.5) node (3s2) [label=below:$$] {};
		
		\draw (1s1) -- (1s2) -- (1f2) -- (1s1) -- (1f1) -- (1s2) -- (1s3) -- (1f3) -- (1s3) --(1f4);
		\draw (2f1) -- (2s1) -- (2f2) -- (2s1) --(2s2) -- (2f3) -- (2s2) -- (2f4) -- (2s2) -- (2s3) -- (2f5) -- (2s3) -- (2f6);
		\draw (3s2) -- (3f2) -- (3s1) -- (3f1) -- (3s1) --(3s3) -- (3f4) -- (3s3) -- (3f5) -- (3s3) -- (3s2) -- (3f3) -- (3s2) --(3s1);
	\end{tikzpicture}
	\vspace{-0.8cm}
\end{figure}
We denote by $\Ho:=\{\ho_{1,-2},\ho_{3,1},\ho_{3,-1},\ho_{4,-2},\ho_{5},\ho_6,\ho_7,\ho_{8}^{(1)},\ho_8^{(2)}\}$.

\begin{lem}\label{m2}
Let $\ho$ be a Hoffman graph in which two slim vertices are adjacent if they share at least one fat common neighbor.
If $\ho$ contains an induced Hoffman subgraph with special matrix in $\mathcal{M}(2)$, then it must contain a member of $\Ho$ as an induced Hoffman subgraph.
\end{lem}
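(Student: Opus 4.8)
The plan is to translate the entries of the special matrix into combinatorial data about the fat neighbourhoods of the slim vertices, and then to run a finite case analysis over the matrices in $\mathcal{M}(2)$, disposing of most shapes either as vacuous or by a one-line reduction.

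First I would set up the dictionary forced by the hypothesis. Let $\go$ be an induced Hoffman subgraph of $\ho$ with $S(\go)=M\in\mathcal{M}(2)$. Since two slim vertices of $\go$ that share a fat neighbour already share it in $\ho$ and are therefore adjacent in $\ho$ (hence in $\go$), the subgraph $\go$ again has the property in the hypothesis. For slim vertices $x_i,x_j$ of $\go$ put $n_{ij}:=|N^{\mathrm{fat}}(x_i)\cap N^{\mathrm{fat}}(x_j)|$ and let $\epsilon_{ij}=1$ if $x_i\sim x_j$ and $\epsilon_{ij}=0$ otherwise; then $M_{ii}=-|N^{\mathrm{fat}}(x_i)|$ and $M_{ij}=\epsilon_{ij}-n_{ij}$. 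The hypothesis forces $\epsilon_{ij}=1$ whenever $n_{ij}\geq 1$, so $M_{ij}=1-n_{ij}$ if $n_{ij}\geq 1$ and $M_{ij}\in\{0,1\}$ if $n_{ij}=0$; combined with $n_{ij}\leq\min(-M_{ii},-M_{jj})$ this determines, for each admissible entry pattern, the sizes of all pairwise overlaps of the fat neighbourhoods.

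With this dictionary in hand I would go through $\mathcal{M}(2)$. A number of matrices are not realizable under the constraint and yield vacuous cases: $m_{2,a}(2)$ has diagonal $-2$, so each slim vertex has only two fat neighbours and $M_{ij}\geq -1$, incompatible with $a\leq -2$; the same overflow rules out $m_{3,a}(2)$ and $m_{4,a}(2)$ once $a$ is sufficiently negative; and $m_9(2)$ is impossible because its two off-diagonal $-1$'s force the third vertex to share both of its two fat neighbours with each of the other two, which makes those two share both fat neighbours and hence have off-diagonal $-1$, not $0$. For the realizable matrices I would exhibit a member of $\Ho$: $m_{1,a}(2)$ produces a slim vertex with at least four fat neighbours, and keeping it together with four of them gives $\ho_{1,-2}$; the overlap analysis identifies $m_{3,1}(2),m_{3,-1}(2),m_{4,-2}(2)$ directly with the corresponding members $\ho_{3,1},\ho_{3,-1},\ho_{4,-2}$; and $m_{4,1}(2),m_{4,-1}(2)$ reduce to the appropriate one of $\ho_{3,1},\ho_{3,-1}$ after deleting a suitable fat neighbour of one slim vertex, which still leaves a valid induced Hoffman subgraph since every retained fat vertex keeps a slim neighbour.

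The real work, and the step I expect to be the main obstacle, is the three-by-three matrices $m_5(2),m_6(2),m_7(2),m_8(2)$, where I would solve explicitly for all pairwise and triple overlaps of the three fat neighbourhoods. For $m_5(2),m_6(2),m_7(2)$ the constraint pins down a unique configuration, which I would verify equals $\ho_5,\ho_6,\ho_7$. The delicate case is $m_8(2)$: its $(1,2)$-entry $0$ genuinely allows two alternatives, namely $x_1\not\sim x_2$ with no common fat neighbour, or $x_1\sim x_2$ with exactly one common fat neighbour, and these produce precisely the two non-isomorphic realizations $\ho_8^{(1)}$ and $\ho_8^{(2)}$, both of which belong to $\Ho$. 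Collecting the cases shows that any induced Hoffman subgraph with special matrix in $\mathcal{M}(2)$ contains a member of $\Ho$, completing the proof.
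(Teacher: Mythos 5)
Your proposal is correct and takes essentially the same route as the paper: the paper's proof is a one-sentence finite case check over the matrices $m_{1,-2}(2)$, $m_{2,-2}(2)$, $m_{3,1}(2)$, $m_{3,-1}(2)$, $m_{3,-2}(2)$, $m_{4,1}(2)$, $m_{4,-1}(2)$, $m_{4,-2}(2)$, $m_{4,-3}(2)$, $m_{5}(2),\ldots,m_{9}(2)$, which is exactly the case analysis you carry out, with your overflow and fat-vertex-deletion arguments making explicit why the infinite families $m_{1,a},m_{2,a},m_{3,a},m_{4,a}$ reduce to these finitely many cases. Your dictionary ($M_{ii}=-|N^{\mathrm{fat}}(x_i)|$, $M_{ij}=\epsilon_{ij}-n_{ij}$ with $\epsilon_{ij}=1$ forced whenever $n_{ij}\geq 1$), the vacuity of $m_9(2)$, and the two realizations of $m_8(2)$ supply precisely the details the paper leaves to the reader.
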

\begin{proof}
 It can be confirmed by checking the {possibilities} of Hoffman graphs with special matrices in $\{m_{1,-2}(2)$, $m_{2,-2}(2)$  
	$m_{3,1}(2)$,$m_{3,-1}(2)$, $m_{3,-2}(2)$, $m_{4,1}(2)$,$m_{4,-1}(2)$, $m_{4,-2}(2)$, $m_{4,-3}(2)$,
	$m_{5}(2),m_{6}(2),m_{7}(2),m_{8}(2),m_{9}(2)\}$.
\end{proof}

By calculation, we have the following proposition.
\begin{pro}\label{cal}
	The following inequalities hold:
\begin{equation*}
	\begin{array}[c]{cccc}
		\lambda_{\min}(G(\ho_{1,-2},7))<-3,&
		\lambda_{\min}(G(\ho_{3,1},7))<-3,&
		\lambda_{\min}(G(\ho_{3,-1},13))<-3,&\\

	\lambda_{\min}(G(\ho_{4,-2},5))<-3,&
	\lambda_{\min}(G(\ho_{5},11))<-3,&
	\lambda_{\min}(G(\ho_{6},5))<-3,&\\

		\lambda_{\min}(G(\ho_{7},15))<-3,&
		\lambda_{\min}(G(\ho_{8}^{(1)},8))<-3,&
		\lambda_{\min}(G(\ho_{8}^{(2)},11))<-3.\\		
	\end{array}
\end{equation*}
\end{pro}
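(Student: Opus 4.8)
The nine inequalities form a finite computation, and the plan is to reduce each one to testing the definiteness of a symmetric matrix of size at most three. Fix one of the Hoffman graphs $\ho\in\Ho$ together with the prescribed integer $p$, and let $A_{\rm slim}$, $D$, and $S(\ho)=A_{\rm slim}-D^TD$ be its slim adjacency matrix, its fat--slim incidence matrix, and its special matrix. I would first exploit the equitable partition of $G(\ho,p)$ in which each slim vertex is a singleton cell and each $p$-clique replacing a fat vertex $f$ is one cell. Any vector supported on a single such clique $K^f$ and summing to zero is an eigenvector of $G(\ho,p)$ with eigenvalue $-1$: inside $K^f$ the adjacency matrix acts as $J-I$, and since all vertices of $K^f$ share the same neighbours outside the clique the external contribution cancels. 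A dimension count shows these vectors, together with the quotient, exhaust the spectrum, so the spectrum of $G(\ho,p)$ is that of the quotient with additional copies of $-1$. As I aim to prove $\lambda_{\min}<-3<-1$, it suffices to locate the smallest quotient eigenvalue.

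Scaling the clique cells by $\sqrt p$ symmetrises the quotient to
\[
\tilde B=\begin{pmatrix} A_{\rm slim} & \sqrt{p}\,D^T\\ \sqrt{p}\,D & (p-1)I \end{pmatrix},
\]
whose eigenvalues are exactly the eigenvalues of $G(\ho,p)$ other than the copies of $-1$. To test whether $\tilde B$ has an eigenvalue below $-3$ I would shift by $3I$ and take the Schur complement of the block $(p+2)I\succ 0$; by inertia additivity $\tilde B+3I$ has a negative eigenvalue if and only if this Schur complement $A_{\rm slim}+3I-\tfrac{p}{p+2}D^TD$ does. Substituting $A_{\rm slim}=S(\ho)+D^TD$, the criterion becomes
\[
\lambda_{\min}\big(G(\ho,p)\big)<-3\quad\Longleftrightarrow\quad S(\ho)+3I+\frac{2}{p+2}\,D^TD\ \text{is not positive semidefinite}.
\]
Here $S(\ho)$ is the explicit member of $\mathcal M(2)$ listed above, and $D^TD$ is the Gram matrix recording the number of fat neighbours of each slim vertex on its diagonal and the number of common fat neighbours off the diagonal, both read from the figures defining $\Ho$; the whole matrix has size $|V_{\rm slim}(\ho)|\le 3$.

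It then remains to run this check for the nine pairs. For the one-dimensional case $\ho_{1,-2}$ the criterion reads $-1+\tfrac{8}{p+2}<0$, which holds precisely for $p\ge 7$; for $\ho_{4,-2}$ with $p=5$ the resulting $2\times 2$ matrix $\left(\begin{smallmatrix}6/7 & -8/7\\ -8/7 & 6/7\end{smallmatrix}\right)$ has smallest eigenvalue $-2/7<0$. I would treat the remaining seven cases identically, either by exhibiting a vector $v$ with $v^T\big(S(\ho)+3I+\tfrac{2}{p+2}D^TD\big)v<0$ or by evaluating the (at most cubic) characteristic polynomial. I expect no conceptual obstacle; the only delicate point is the bookkeeping of $D^TD$ from each figure, and in particular distinguishing $\ho_8^{(1)}$ from $\ho_8^{(2)}$, which share the special matrix $m_8(2)$ but have different fat incidences and hence the different thresholds $8$ and $11$. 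Since $\tfrac{2}{p+2}$ decreases in $p$, each listed value is moreover the exact threshold below which the inequality fails, which is a useful consistency check on the computation.
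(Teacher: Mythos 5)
Your proposal is correct, and it takes a genuinely different route from the paper: the paper's entire proof of Proposition \ref{cal} is ``By computer verification,'' i.e.\ a direct numerical computation of the smallest eigenvalue of each of the nine graphs $G(\ho,p)$ (the largest of which, $G(\ho_{7},15)$, has $63$ vertices), whereas you reduce each inequality to a definiteness test on a matrix of size at most $3$. Your reduction is sound: the partition of $G(\ho,p)$ into slim singletons and the cliques $K^f$ is equitable, the zero-sum vectors on each $K^f$ contribute the eigenvalue $-1$ with total multiplicity $|V_{\rm fat}(\ho)|(p-1)$, the rest of the spectrum is that of the symmetrized quotient, and Haynsworth inertia additivity applied to the block $(p+2)I\succ 0$ gives
\[
\lambda_{\min}\bigl(G(\ho,p)\bigr)<-3\iff S(\ho)+3I+\tfrac{2}{p+2}\,D^TD\ \text{has a negative eigenvalue}.
\]
Your two sample evaluations are right, and the remaining seven cases do go through identically; your side remark is also confirmed, since each listed $p$ is the exact threshold (the criterion matrix is positive semidefinite at $p-1$ in every case). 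What your route buys is a proof a human can check, exact thresholds, and built-in monotonicity in $p$; what the paper's buys is only convenience, at the cost of being unverifiable by hand. One caution, which vindicates your warning that reading $D^TD$ off the figures is the delicate point: in the paper the figure labels of $\ho_{3,1}$ and $\ho_{3,-1}$ are swapped relative to their special matrices --- the graph drawn as $\ho_{3,1}$ (three fat vertices, two of them common to both slim vertices) has special matrix $m_{3,-1}(2)$, while the graph drawn as $\ho_{3,-1}$ (five fat vertices, disjoint fat neighborhoods) has special matrix $m_{3,1}(2)$. Taken literally from the figure, the entry $\lambda_{\min}(G(\ho_{3,1},7))<-3$ would be false, since the criterion matrix is then $\left(\begin{smallmatrix}2/3&-5/9\\-5/9&13/9\end{smallmatrix}\right)$, which is positive definite; the proposition is true with $p=7$ attached to the graph whose special matrix is $m_{3,1}(2)$ and $p=13$ to the one with $m_{3,-1}(2)$. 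So when you carry out the remaining cases, identify the graphs by their special matrices rather than by the figure captions.
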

\begin{proof}
	By computer verification.
\end{proof}

Let $\ho_{(s)}$ the $2$-fat Hoffman graph with $s$ slim vertices and two fat vertices such that its slim graph is a complete graph, and let $\ho^{(s)}_{(2)}$ be the Hoffman graph consisting of two adjacent slim vertices, one of which is adjacent to exactly $s$ fat vertices, while the other has no fat neighbors.

\begin{pro}\label{215}
	\begin{enumerate}
	\item If $p_1:=s(s-1)+1$, then $\lambda_{\min}(G(\ho^{(s+1)},p_1))<-s$.
	\item If  $p_2:=(s-1)(2s-1)+1$, then $  \lambda_{\min}(G(\ho_{(s)},p_2))<-s$.
	\item {If $p_3=(s+1)(s-1)^2+1$, then $\lambda_{\min}(G(\ho^{(s)}_{(2)},p_3))<-s$.}
	\end{enumerate}
\end{pro}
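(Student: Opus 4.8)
The plan is to describe each graph $G(\ho,p_i)$ explicitly and then exhibit an eigenvalue strictly below $-s$ by means of an equitable partition. The structure of $G(\ho,p_i)$ is transparent once one notes that distinct fat vertices are never adjacent, so the clique $K^{f}$ replacing a fat vertex $f$ is joined \emph{only} to the slim neighbours of $f$. Concretely, $G(\ho^{(s+1)},p_1)$ is a single vertex $v$ adjacent to all vertices of $s+1$ pairwise non-adjacent $p_1$-cliques; $G(\ho_{(s)},p_2)$ is a clique $K_s$ on the slim vertices together with two pairwise non-adjacent $p_2$-cliques, each completely joined to $K_s$; and $G(\ho^{(s)}_{(2)},p_3)$ consists of two adjacent slim vertices $u,w$ together with $s$ pairwise non-adjacent $p_3$-cliques, each completely joined to $u$, while $w$ is joined only to $u$. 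First I would check that the obvious coarse partitions are equitable: $\{v\}$ versus all clique-vertices in the first case, $V(K_s)$ versus the two cliques in the second, and $\{u\},\{w\},$ (all cliques) in the third. Since the eigenvalues of the quotient matrix $B$ of an equitable partition are eigenvalues of the whole graph, it then suffices to produce an eigenvalue of $B$ below $-s$.

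These quotient matrices are $B_1=\left(\begin{smallmatrix} 0 & (s+1)p_1 \\ 1 & p_1-1 \end{smallmatrix}\right)$, $B_2=\left(\begin{smallmatrix} s-1 & 2p_2 \\ s & p_2-1 \end{smallmatrix}\right)$, and $B_3=\left(\begin{smallmatrix} 0 & 1 & sp_3 \\ 1 & 0 & 0 \\ 1 & 0 & p_3-1 \end{smallmatrix}\right)$, each with real spectrum. The decisive step in Parts (i) and (ii) is to evaluate the characteristic polynomial $\chi_{B}(\theta)=\det(\theta I-B)$ at $\theta=-s$: since $\chi_{B_1}$ and $\chi_{B_2}$ are upward parabolas, the single inequality $\chi_{B}(-s)<0$ forces $-s$ to lie strictly between the two roots and hence forces the smaller root below $-s$. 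A routine computation gives $\chi_{B_1}(-s)=s(s-1)-p_1$ and $\chi_{B_2}(-s)=(s-1)(2s-1)-p_2$, so the condition $\chi_{B}(-s)<0$ is exactly $p_1>s(s-1)$ and $p_2>(s-1)(2s-1)$, both of which hold for the stated values $p_1=s(s-1)+1$ and $p_2=(s-1)(2s-1)+1$.

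For Part (iii) the quotient $B_3$ is $3\times 3$, so $\chi_{B_3}$ is a cubic with $\chi_{B_3}(\theta)\to-\infty$ as $\theta\to-\infty$. Here I would compute $\chi_{B_3}(-s)=p_3-(s+1)(s-1)^2$, which is positive for $p_3=(s+1)(s-1)^2+1$. By the intermediate value theorem $\chi_{B_3}$ then has a root in $(-\infty,-s)$, which is an eigenvalue of $G(\ho^{(s)}_{(2)},p_3)$ lying below $-s$. In all three cases one concludes $\lambda_{\min}(G(\ho,p_i))\le\lambda_{\min}(B)<-s$, as required.

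The calculations themselves are elementary; the points that require care are structural rather than analytic. The main thing to get right is the adjacency pattern of $G(\ho,p_i)$ — in particular that the cliques replacing distinct fat vertices are mutually non-adjacent, and that $w$ acquires no clique-neighbour in Part (iii) — because an incorrect join pattern would alter the quotient matrix and shift the threshold on $p$. The one genuinely non-formal issue is guaranteeing that the root we locate really lies below $-s$ rather than being a spurious larger root; the sign-of-$\chi_B$-at-$(-s)$ argument settles this uniformly, and as a bonus it shows that the prescribed $p_i$ are the least integers for which the strict inequality $\lambda_{\min}(G(\ho,p_i))<-s$ holds.
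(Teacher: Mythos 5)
Your proposal is correct and takes essentially the same route as the paper: the same explicit descriptions of $G(\ho^{(s+1)},p_1)$, $G(\ho_{(s)},p_2)$, $G(\ho^{(s)}_{(2)},p_3)$, the same equitable partitions, and the same quotient matrices. The only difference is cosmetic — you certify the smallest eigenvalue lies below $-s$ by the sign of the characteristic polynomial at $-s$, while the paper writes out the smallest root explicitly for (i) and (ii) and uses $\det(A_3+sI)<0$ for (iii), which is exactly your sign condition up to the factor $(-1)^3$.
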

\begin{proof}
	The graph $G(\ho^{(s+1)},p_1)$ consists of a vertex $u$ and $s+1$ pairwise disjoint cliques $C_1,\ldots,C_{s+1}$.
	Note that $\{\{u\},\bigcup_{i=1}^{s+1}V(C_i)\}$ is an equitable partition of $G(\ho^{(s+1)},p_1)$ with quotient matrix  $A_1=\footnotesize\begin{pmatrix}
		0 & (s+1)p_1 \\
		1 & p_1-1
	\end{pmatrix}$
	(for  the notations of equitable partitions and quotient matrices, see \cite[Chapter 9.3]{GD01}). The smallest eigenvalue of $A_1$ is $\frac{1}{2}(p_1-1-\sqrt{(p_1+1)^2+4sp_1})$, which is less than $-s$. Thus,
	$\lambda_{\min}(G(\ho^{(s+1)},p_1))<-s$ holds.
	
	The graph $G(\ho_{(s)},p_2)$ consists of $s$ vertices $u_1,\ldots,u_s$ and two disjoint cliques $C_1,C_2$.
	Note that $\{\{u_i\}_{i=1}^s,V(C_1)\cup V(C_2)\}$ is an equitable partition of
	$G(\ho_{(s)},p_2)$ with quotient matrix  $A_2=\footnotesize\begin{pmatrix}
		s-1 & 2p_2 \\
		s & p_2-1
	\end{pmatrix}$. 
	The smallest eigenvalue of $A_1$ is $\frac{1}{2}(s+p_2-2-\sqrt{(s+p_2)^2+4p_2s})$, which is less than $-s$. Thus, $\lambda_{\min}(G(\ho_{(s)},p_2))<-s$ holds.
	
{The graph $G(\ho^{(s)}_{(2)},p_3)$ consists of two vertices $u,v$ and $s$ pairwise disjoint cliques $C_1,\ldots,C_{s}$.
	Note that $\{\{u\},\{v\},\bigcup_{i=1}^{s}V(C_i)\}$ is an equitable partition of $G(\ho^{(s)}_{(2)},p_3)$ with quotient matrix  $A_3=\footnotesize\begin{pmatrix}
		0 & 1 & sp_3 \\
		1 & 0 & 0 \\
		1 & 0 & p_3-1
	\end{pmatrix}.$
It follows that $\det(A_3+sI)=(s+1)(s-1)^2-p_3<0$. Thus,
	$\lambda_{\min}(G(\ho^{(s+1)},p_1))<-s$ holds.}
\end{proof}

	Let $t$ be a positive integer.  We denote by $\Go(t)$ the family of pairwise non-isomorphic indecomposable $t$-fat Hoffman graphs whose special matrix is either  $\footnotesize\begin{pmatrix}
		J_{r_1} & -J_{r_1\times r_2} \\
		-J_{r_2\times r_1} & J_{r_2}
	\end{pmatrix}-(t+1)I$
	or $(-t-1)$, where $ r_1,r_2\in[t]$.
Note that $\Go(t)$ is a finite family of Hoffman graphs.

\begin{lem}\label{b2linehoff}
	A $\Go(2)$-line Hoffman graph is a \big\{\raisebox{-1ex}{\begin{tikzpicture}[scale=0.3]
			
			\tikzstyle{every node}=[draw,circle,fill=black,minimum size=10pt,scale=0.3,
			inner sep=0pt]
			
			\draw (-2.1,0) node (1f1) [label=below:$$] {};
			\draw (-1.6,0) node (1f2) [label=below:$$] {};
			\draw (-1.1,0) node (1f3) [label=below:$$] {};

			\tikzstyle{every node}=[draw,circle,fill=black,minimum size=5pt,scale=0.3,
			inner sep=0pt]

			\draw (-1.6,1) node (1s1) [label=below:$$] {};
			
			\draw (1f1) -- (1s1) -- (1f2);
			\draw (1f3) -- (1s1);
	\end{tikzpicture}},\hspace{-0.08cm}
	\raisebox{-1ex}{\begin{tikzpicture}[scale=0.3]
			\tikzstyle{every node}=[draw,circle,fill=black,minimum size=10pt,scale=0.3,
			inner sep=0pt]

			\draw (-0.5,0) node (2f1) [label=below:$$] {};
			\draw (0.5,0) node (2f2) [label=below:$$] {};
			\draw (-0.5,1) node (2f3) [label=below:$$] {};
			\draw (0.5,1) node (2f4) [label=below:$$] {};

			\tikzstyle{every node}=[draw,circle,fill=black,minimum size=5pt,scale=0.3,
			inner sep=0pt]

			\draw (0,0.2) node (2s1) [label=below:$$] {};
			\draw (0.3,0.5) node (2s2) [label=below:$$] {};
			\draw (-0.3,0.5) node (2s3) [label=below:$$] {};
			\draw (0,0.8) node (2s4) [label=below:$$] {};

			\draw (2f1) -- (2s1) -- (2f2) -- (2s2) -- (2f4) -- (2s4) -- (2f3) -- (2s3) -- (2f1);
			\draw (2s1) -- (2s4);
			\draw (2s2) -- (2s3);
	\end{tikzpicture}},\hspace{-0.08cm}
	\raisebox{-1ex}{\begin{tikzpicture}[scale=0.3]
			\tikzstyle{every node}=[draw,circle,fill=black,minimum size=10pt,scale=0.3,
			inner sep=0pt]

			\draw (1.5,0) node (3f1) [label=below:$$] {};
			\draw (1.5,1) node (3f2) [label=below:$$] {};

			\tikzstyle{every node}=[draw,circle,fill=black,minimum size=5pt,scale=0.3,
			inner sep=0pt]

			\draw (1,0.5) node (3s1) [label=below:$$] {};
			\draw (2,0.5) node (3s2) [label=below:$$] {};
			
			\draw (3s1) -- (3s2);
			\draw (3f1) -- (3s1) -- (3f2) -- (3s2) -- (3f1);
	\end{tikzpicture}}\big\}-line Hoffman graph.
\end{lem}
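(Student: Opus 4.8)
The plan is to reduce the statement to a finite check on the family $\Go(2)$, using the fact (noted in the text) that $\Go(2)$ is finite together with the transitivity of the induced-Hoffman-subgraph relation. Write $\mathcal{F}$ for the three depicted target Hoffman graphs: the graph $\ho^{(3)}$ with one slim vertex and three fat vertices (special matrix $(-3)$), the graph on four slim vertices shown in the middle, and the graph on two adjacent slim vertices sharing two fat neighbors (special matrix $J_2-3I$). By Definition \ref{line}, it suffices to prove the single claim that \emph{every member of $\Go(2)$ is isomorphic to an induced Hoffman subgraph of a member of $\mathcal{F}$}: indeed, if $\ho$ is a $\Go(2)$-line Hoffman graph, take the witnessing $\ho'=\uplus_i\ho_i'$ with each $\ho_i'$ an induced Hoffman subgraph of some member of $\Go(2)$; composing with the claim, each $\ho_i'$ is then an induced Hoffman subgraph of a member of $\mathcal{F}$, so the same $\ho'$ witnesses that $\ho$ is an $\mathcal{F}$-line Hoffman graph.

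To verify the claim I would first list the possible special matrices of members of $\Go(2)$, namely $(-3)$ and the four matrices $\left(\begin{smallmatrix}J_{r_1} & -J_{r_1\times r_2}\\ -J_{r_2\times r_1} & J_{r_2}\end{smallmatrix}\right)-3I$ with $r_1,r_2\in\{1,2\}$ (where $(1,2)$ and $(2,1)$ give the same matrix up to relabeling). The key combinatorial observation, which does most of the work, is that for a $2$-fat Hoffman graph an off-diagonal special-matrix entry $+1$ forces the two slim vertices to be adjacent with no common fat neighbor, whereas an entry $-1$ leaves exactly two possibilities: adjacent with two common fat neighbors, or non-adjacent with one common fat neighbor (each diagonal entry $-2$ records exactly two fat neighbors). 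Running through the matrices with this rule, the $+1$ entries pin down the adjacencies and then a short propagation argument shows that the $-1$ entries are forced: for the $r_1=1,r_2=2$ and the $r_1=r_2=2$ matrices, any choice of an ``adjacent, two-common-fat'' pair contradicts a neighbouring $+1$ entry (via the indecomposability constraint of Lemma \ref{combi}), so every $-1$ pair must be ``non-adjacent, one-common-fat''. This yields a unique indecomposable realization in each of those two cases, two realizations for $r_1=r_2=1$, and the single-vertex graph $\ho^{(3)}$ for $(-3)$.

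It then remains to exhibit each of these (five) indecomposable members inside $\mathcal{F}$ as an induced Hoffman subgraph. The matrix $(-3)$ is $\ho^{(3)}$, the first member of $\mathcal{F}$; the ``adjacent, two-common-fat'' realization of $r_1=r_2=1$ is the third member of $\mathcal{F}$; and the unique realization of $r_1=r_2=2$ is the second member of $\mathcal{F}$ (its special matrix equals the $r_1=r_2=2$ matrix after relabeling the four slim vertices). The remaining two members, namely the ``non-adjacent, one-common-fat'' realization of $r_1=r_2=1$ and the unique realization of $r_1=1,r_2=2$, I would locate as induced Hoffman subgraphs of the second member of $\mathcal{F}$, on a suitable pair respectively triple of its slim vertices: a pair of non-adjacent slim vertices sharing one fat neighbor, and a triple consisting of one adjacent pair with no common fat neighbor together with a third vertex non-adjacent to both and sharing one fat neighbor with each. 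Checking that each embedding preserves slim-adjacency, fat-adjacency and the labeling is then a direct inspection.

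I expect the main obstacle to be bookkeeping rather than anything conceptual: because the special matrix does not determine a Hoffman graph, one must enumerate all non-isomorphic $2$-fat realizations of each allowed special matrix and confirm each one individually embeds, and in particular verify that the two ``extra'' realizations really appear as induced Hoffman subgraphs of the second member of $\mathcal{F}$ with the correct fat-neighbourhoods, not merely the correct special matrix. The $+1$-forces-adjacency observation is what keeps this enumeration finite and short, and the indecomposability constraint via Lemma \ref{combi} is what rules out the spurious ``adjacent, two-common-fat'' possibilities.
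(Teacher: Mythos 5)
Your proposal is correct and takes essentially the same route as the paper: the paper's proof simply asserts the five-member list of $\Go(2)$, observes that the two members outside the target family (the three-slim-vertex graph and the two-non-adjacent-slim-vertex graph) are induced Hoffman subgraphs of the middle four-slim-vertex graph, and concludes by the same transitivity reasoning you describe. The only difference is that you actually derive the enumeration of $\Go(2)$ via the $+1$/$-1$ special-matrix entry analysis, which the paper states without proof; note only that ruling out the ``adjacent with two common fat neighbours'' option follows directly from counting fat neighbours against an adjacent $+1$ entry, so the appeal to Lemma \ref{combi} there is unnecessary (though harmless).
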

\begin{proof}
	Note that $\Go(2)=$ \big\{\raisebox{-1ex}{\begin{tikzpicture}[scale=0.3]
			
			\tikzstyle{every node}=[draw,circle,fill=black,minimum size=10pt,scale=0.3,
			inner sep=0pt]
			
			\draw (-2.1,0) node (1f1) [label=below:$$] {};
			\draw (-1.6,0) node (1f2) [label=below:$$] {};
			\draw (-1.1,0) node (1f3) [label=below:$$] {};

			\tikzstyle{every node}=[draw,circle,fill=black,minimum size=5pt,scale=0.3,
			inner sep=0pt]

			\draw (-1.6,1) node (1s1) [label=below:$$] {};
			
			\draw (1f1) -- (1s1) -- (1f2);
			\draw (1f3) -- (1s1);
	\end{tikzpicture}},\hspace{-0.08cm}
	\raisebox{-1ex}{\begin{tikzpicture}[scale=0.3]
			\tikzstyle{every node}=[draw,circle,fill=black,minimum size=10pt,scale=0.3,
			inner sep=0pt]

			\draw (-0.5,0) node (2f1) [label=below:$$] {};
			\draw (0.5,0) node (2f2) [label=below:$$] {};
			\draw (-0.5,1) node (2f3) [label=below:$$] {};
			\draw (0.5,1) node (2f4) [label=below:$$] {};

			\tikzstyle{every node}=[draw,circle,fill=black,minimum size=5pt,scale=0.3,
			inner sep=0pt]

			\draw (0,0.2) node (2s1) [label=below:$$] {};
			\draw (0.3,0.5) node (2s2) [label=below:$$] {};
			\draw (-0.3,0.5) node (2s3) [label=below:$$] {};
			\draw (0,0.8) node (2s4) [label=below:$$] {};

			\draw (2f1) -- (2s1) -- (2f2) -- (2s2) -- (2f4) -- (2s4) -- (2f3) -- (2s3) -- (2f1);
			\draw (2s1) -- (2s4);
			\draw (2s2) -- (2s3);
	\end{tikzpicture}},\hspace{-0.08cm}
	\raisebox{-1ex}{\begin{tikzpicture}[scale=0.3]
			\tikzstyle{every node}=[draw,circle,fill=black,minimum size=10pt,scale=0.3,
			inner sep=0pt]

			\draw (-0.5,0) node (2f1) [label=below:$$] {};
			\draw (0.5,0) node (2f2) [label=below:$$] {};
			\draw (-0.5,1) node (2f3) [label=below:$$] {};
			\draw (0.5,1) node (2f4) [label=below:$$] {};

			\tikzstyle{every node}=[draw,circle,fill=black,minimum size=5pt,scale=0.3,
			inner sep=0pt]

			\draw (0.3,0.5) node (2s2) [label=below:$$] {};
			\draw (-0.3,0.5) node (2s3) [label=below:$$] {};
			\draw (0,0.8) node (2s4) [label=below:$$] {};

			\draw (2f2) -- (2s2) -- (2f4) -- (2s4) -- (2f3) -- (2s3) -- (2f1);
			\draw (2s2) -- (2s3);
	\end{tikzpicture}},\hspace{-0.08cm}
	\raisebox{-1ex}{\begin{tikzpicture}[scale=0.3]
			\tikzstyle{every node}=[draw,circle,fill=black,minimum size=10pt,scale=0.3,
			inner sep=0pt]

			\draw (1.5,0) node (3f1) [label=below:$$] {};
			\draw (1.5,1) node (3f2) [label=below:$$] {};

			\tikzstyle{every node}=[draw,circle,fill=black,minimum size=5pt,scale=0.3,
			inner sep=0pt]

			\draw (1,0.5) node (3s1) [label=below:$$] {};
			\draw (2,0.5) node (3s2) [label=below:$$] {};
			
			\draw (3s1) -- (3s2);
			\draw (3f1) -- (3s1) -- (3f2) -- (3s2) -- (3f1);
	\end{tikzpicture}},\hspace{-0.08cm}
	\raisebox{-1ex}{\begin{tikzpicture}[scale=0.3]
			
			\tikzstyle{every node}=[draw,circle,fill=black,minimum size=10pt,scale=0.3,
			inner sep=0pt]
			
			\draw (-2.2,0) node (1f1) [label=below:$$] {};
			\draw (-1.6,0) node (1f2) [label=below:$$] {};
			\draw (-1.0,0) node (1f3) [label=below:$$] {};

			\tikzstyle{every node}=[draw,circle,fill=black,minimum size=5pt,scale=0.3,
			inner sep=0pt]

			\draw (-1.9,1) node (1s1) [label=below:$$] {};
			\draw (-1.3,1) node (1s2) [label=below:$$] {};
			
			\draw (1f1) -- (1s1) -- (1f2);
			\draw (1f2) -- (1s2) -- (1f3);
	\end{tikzpicture}}\big\}.
	As
	\raisebox{-1ex}{\begin{tikzpicture}[scale=0.3]
			\tikzstyle{every node}=[draw,circle,fill=black,minimum size=10pt,scale=0.3,
			inner sep=0pt]

			\draw (-0.5,0) node (2f1) [label=below:$$] {};
			\draw (0.5,0) node (2f2) [label=below:$$] {};
			\draw (-0.5,1) node (2f3) [label=below:$$] {};
			\draw (0.5,1) node (2f4) [label=below:$$] {};

			\tikzstyle{every node}=[draw,circle,fill=black,minimum size=5pt,scale=0.3,
			inner sep=0pt]

			\draw (0.3,0.5) node (2s2) [label=below:$$] {};
			\draw (-0.3,0.5) node (2s3) [label=below:$$] {};
			\draw (0,0.8) node (2s4) [label=below:$$] {};

			\draw (2f2) -- (2s2) -- (2f4) -- (2s4) -- (2f3) -- (2s3) -- (2f1);
			\draw (2s2) -- (2s3);
	\end{tikzpicture}} and
	\raisebox{-1ex}{\begin{tikzpicture}[scale=0.3]
		
		\tikzstyle{every node}=[draw,circle,fill=black,minimum size=10pt,scale=0.3,
		inner sep=0pt]
		
		\draw (-2.2,0) node (1f1) [label=below:$$] {};
		\draw (-1.6,0) node (1f2) [label=below:$$] {};
		\draw (-1.0,0) node (1f3) [label=below:$$] {};

		\tikzstyle{every node}=[draw,circle,fill=black,minimum size=5pt,scale=0.3,
		inner sep=0pt]

		\draw (-1.9,1) node (1s1) [label=below:$$] {};
		\draw (-1.3,1) node (1s2) [label=below:$$] {};
		
		\draw (1f1) -- (1s1) -- (1f2);
		\draw (1f2) -- (1s2) -- (1f3);
\end{tikzpicture}}
both are induced Hoffman subgraphs of
	\raisebox{-1ex}{\begin{tikzpicture}[scale=0.3]
			\tikzstyle{every node}=[draw,circle,fill=black,minimum size=10pt,scale=0.3,
			inner sep=0pt]

			\draw (-0.5,0) node (2f1) [label=below:$$] {};
			\draw (0.5,0) node (2f2) [label=below:$$] {};
			\draw (-0.5,1) node (2f3) [label=below:$$] {};
			\draw (0.5,1) node (2f4) [label=below:$$] {};

			\tikzstyle{every node}=[draw,circle,fill=black,minimum size=5pt,scale=0.3,
			inner sep=0pt]

			\draw (0,0.2) node (2s1) [label=below:$$] {};
			\draw (0.3,0.5) node (2s2) [label=below:$$] {};
			\draw (-0.3,0.5) node (2s3) [label=below:$$] {};
			\draw (0,0.8) node (2s4) [label=below:$$] {};

			\draw (2f1) -- (2s1) -- (2f2) -- (2s2) -- (2f4) -- (2s4) -- (2f3) -- (2s3) -- (2f1);
			\draw (2s1) -- (2s4);
			\draw (2s2) -- (2s3);
	\end{tikzpicture}}. 
	Thus, a  $\Go(2)$-line Hoffman graph is a \big\{\raisebox{-1ex}{\begin{tikzpicture}[scale=0.3]
			
			\tikzstyle{every node}=[draw,circle,fill=black,minimum size=10pt,scale=0.3,
			inner sep=0pt]
			
			\draw (-2.1,0) node (1f1) [label=below:$$] {};
			\draw (-1.6,0) node (1f2) [label=below:$$] {};
			\draw (-1.1,0) node (1f3) [label=below:$$] {};

			\tikzstyle{every node}=[draw,circle,fill=black,minimum size=5pt,scale=0.3,
			inner sep=0pt]

			\draw (-1.6,1) node (1s1) [label=below:$$] {};
			
			\draw (1f1) -- (1s1) -- (1f2);
			\draw (1f3) -- (1s1);
	\end{tikzpicture}},\hspace{-0.08cm}
	\raisebox{-1ex}{\begin{tikzpicture}[scale=0.3]
			\tikzstyle{every node}=[draw,circle,fill=black,minimum size=10pt,scale=0.3,
			inner sep=0pt]

			\draw (-0.5,0) node (2f1) [label=below:$$] {};
			\draw (0.5,0) node (2f2) [label=below:$$] {};
			\draw (-0.5,1) node (2f3) [label=below:$$] {};
			\draw (0.5,1) node (2f4) [label=below:$$] {};

			\tikzstyle{every node}=[draw,circle,fill=black,minimum size=5pt,scale=0.3,
			inner sep=0pt]

			\draw (0,0.2) node (2s1) [label=below:$$] {};
			\draw (0.3,0.5) node (2s2) [label=below:$$] {};
			\draw (-0.3,0.5) node (2s3) [label=below:$$] {};
			\draw (0,0.8) node (2s4) [label=below:$$] {};

			\draw (2f1) -- (2s1) -- (2f2) -- (2s2) -- (2f4) -- (2s4) -- (2f3) -- (2s3) -- (2f1);
			\draw (2s1) -- (2s4);
			\draw (2s2) -- (2s3);
	\end{tikzpicture}},\hspace{-0.08cm}
	\raisebox{-1ex}{\begin{tikzpicture}[scale=0.3]
			\tikzstyle{every node}=[draw,circle,fill=black,minimum size=10pt,scale=0.3,
			inner sep=0pt]

			\draw (1.5,0) node (3f1) [label=below:$$] {};
			\draw (1.5,1) node (3f2) [label=below:$$] {};

			\tikzstyle{every node}=[draw,circle,fill=black,minimum size=5pt,scale=0.3,
			inner sep=0pt]

			\draw (1,0.5) node (3s1) [label=below:$$] {};
			\draw (2,0.5) node (3s2) [label=below:$$] {};
			
			\draw (3s1) -- (3s2);
			\draw (3f1) -- (3s1) -- (3f2) -- (3s2) -- (3f1);
	\end{tikzpicture}}\big\}-line Hoffman graph.
\end{proof}

Koolen et al.\cite[Theorem 3.7]{KYY19} showed the following result, which relates $\mathcal{M}(t)$ and $\Go(t)$.
Two matrices $B_1$ and $B_2$ are \emph{equivalent} if there exists a permutation matrix $P$ such that $P^T B_1 P = B_2$.

\begin{thm}[{\cite[Theorem 3.7]{KYY19}}]\label{line graphhjk} Let $t$ be a positive integer and $\ho$ a $t$-fat Hoffman graph.
	 If the special matrix $S(\ho)$ does not contain any principal submatrix equivalent to an element of $\mathcal{M}(t)$, then $\ho$ is a $t$-fat $\Go(t)$-line Hoffman graph.
	
\end{thm}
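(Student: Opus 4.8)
The plan is to extract from the hypothesis enough control on the entries of $S(\ho)$ to recognise its block structure, and then to realise each block as (an induced subgraph of) a member of $\Go(t)$, adjoining fat vertices where necessary. Throughout, writing $a_{xy}=1$ if $x\sim y$ and $0$ otherwise and $\mu_{xy}:=|N_{\ho}^{{\rm fat}}(x)\cap N_{\ho}^{{\rm fat}}(y)|$, one has $S(\ho)_{xx}=-|N_{\ho}^{{\rm fat}}(x)|$ and $S(\ho)_{xy}=a_{xy}-\mu_{xy}$ for distinct slim $x,y$; in particular every off-diagonal entry is at most $1$. First I would use the $1\times1$ matrices $m_{1,a}(t)$ ($a\le-2$) together with $t$-fatness to force every diagonal entry into $\{-t,-t-1\}$, so each slim vertex has exactly $t$ or $t+1$ fat neighbours. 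Next, $m_{3,a}(t)$ and $m_{4,a}(t)$ (forbidden for $a\in\{1,-1,-2,\dots\}$) show that a vertex $x$ with $S(\ho)_{xx}=-t-1$ satisfies $S(\ho)_{xy}=0$ for all other slim $y$, so these vertices split off as singleton blocks $(-t-1)$; and $m_{2,a}(t)$ ($a\le-2$) forces $S(\ho)_{xy}\in\{-1,0,1\}$ for two vertices of diagonal $-t$.

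The heart of the argument is the triangle analysis of the diagonal-$(-t)$ vertices under $m_5(t),\dots,m_9(t)$. Tabulating which $\{-1,0,1\}$-triangles are forbidden, I would prove, for each connected component $C$ of the graph on these vertices in which $x,y$ are joined exactly when $S(\ho)_{xy}\neq0$: (i) the relation ``$S(\ho)_{xy}=1$'' is transitive, since a failing third entry $0$ or $-1$ gives $m_8(t)$ or $m_6(t)$, so it partitions $C$ into $+1$-cliques; (ii) $C$ contains no entry $0$, since a $0$ closing a path of nonzero entries yields $m_7(t)$, $m_8(t)$ or $m_9(t)$; (iii) between two distinct $+1$-cliques every entry is $-1$, as otherwise $m_6(t)$ or $m_7(t)$ appears; and (iv) there are at most two $+1$-cliques, three pairwise-$(-1)$ vertices giving $m_5(t)$. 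Hence the principal submatrix of $S(\ho)$ on $C$ is, up to permutation, either $J_r-(t+1)I$ (one clique) or $\left(\begin{smallmatrix}J_{r_1}&-J\\-J&J_{r_2}\end{smallmatrix}\right)-(t+1)I$ (two cliques). The crucial quantitative point is that in the two-clique case $r_1,r_2\le t$: for $x$ in the first clique and any $z$ in the second, $S(\ho)_{xz}=-1$ forces $\mu_{xz}\ge1$, yet a common fat neighbour of $x$ cannot be shared with two vertices of the second clique, because those two lie in one $+1$-clique and so have no common fat neighbour; thus the $r_2$ witnesses are distinct fat neighbours of $x$, whence $r_2\le t$, and symmetrically $r_1\le t$. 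Consequently each two-clique component, being an indecomposable $t$-fat Hoffman graph with special matrix of the prescribed shape and $r_1,r_2\le t$, is by definition a member of $\Go(t)$, while each singleton $(-t-1)$-block is a copy of $\ho^{(t+1)}\in\Go(t)$; note that $S(\ho)$ is block-diagonal across components by construction, so this already realises $\ho$ as a $\uplus$ of such pieces even when fat vertices are shared between components.

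It remains to absorb the one-clique components, where the submatrix $J_r-(t+1)I$ may have $r>t$; this is the step that genuinely exploits the freedom in Definition \ref{line}. For each such component I would adjoin a single new fat vertex $f^\ast$ adjacent to all of its slim vertices, carrying this out independently on every one-clique component and leaving the others untouched to obtain $\ho'$. Every within-clique pair now shares precisely $f^\ast$, so its entry rises from $1$ to $0$ while the diagonal drops to $-(t+1)$; thus the block becomes $-(t+1)I$ and splits into $r$ singletons, each the induced Hoffman subgraph generated by one slim vertex, namely $\ho^{(t+1)}$. Since adjoining fat vertices changes neither the slim graph nor the induced subgraph on $V(\ho)$, conditions (1)–(2) of Definition \ref{line} hold, and by the refined block decomposition (legitimate by Definition \ref{directsummatrix}, the only cross-block sharing being $f^\ast$ along edges, as in Lemma \ref{combi}) one gets $\ho'=\uplus_i\ho_i'$ with every $\ho_i'$ a member of $\Go(t)$. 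This exhibits $\ho$ as a $t$-fat $\Go(t)$-line Hoffman graph. I expect the main obstacle to be the bookkeeping of the second paragraph—producing a clean, exhaustive case list of the forbidden $\{-1,0,1\}$-triangles and deducing (i)–(iv) from it—together with the realisation that one-clique blocks of unbounded size are admissible and must be dissolved by the common fat vertex $f^\ast$ rather than ruled out.
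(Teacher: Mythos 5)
The paper never proves this theorem: it is imported verbatim from \cite[Theorem 3.7]{KYY19}, so there is no in-paper argument to compare you against, and your proposal has to stand on its own. It does: I checked it step by step against the definitions in Section 2 and found it correct and complete. The entry formulas $S(\ho)_{xx}=-|N_{\ho}^{\rm fat}(x)|$ and $S(\ho)_{xy}=a_{xy}-\mu_{xy}$ are right, so forbidding $m_{1,a}(t)$, $m_{3,a}(t)$, $m_{4,a}(t)$, $m_{2,a}(t)$ does force the diagonal into $\{-t,-t-1\}$, makes every $(-t-1)$-vertex a zero row (hence a singleton block $(-t-1)$, realized by $\ho^{(t+1)}\in\Go(t)$), and pins the remaining off-diagonal entries to $\{0,\pm1\}$; the triangle types $(1,1,0)$, $(1,-1,0)$, $(-1,-1,0)$, $(1,1,-1)$ and ``pairwise $-1$'' are indeed permutation-equivalent to $m_8(t)$, $m_7(t)$, $m_9(t)$, $m_6(t)$, $m_5(t)$ respectively, which gives exactly your (i)--(iv) and the two possible block shapes. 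Your counting argument that $r_1,r_2\le t$ in the two-clique case is the decisive step and it is sound: vertices in one $+1$-clique satisfy $a_{z_iz_j}=1$, $\mu_{z_iz_j}=0$, so the fat witnesses for $S_{xz_i}=-1$ are pairwise distinct fat neighbours of $x$, of which there are only $t$; this is precisely what matches the constraint $r_1,r_2\in[t]$ in the definition of $\Go(t)$. Likewise, dissolving each one-clique block $J_r-(t+1)I$ (where $r$ may exceed $t$) by adjoining one new fat vertex is legitimate: the block becomes $-(t+1)I$, the pieces become copies of $\ho^{(t+1)}$, the enlarged matrix stays block diagonal (the new fat vertex is shared only along edges, consistent with Lemma \ref{combi}(iv)), and adding fat vertices changes neither the slim graph nor the fact that $\ho$ sits inside as an induced Hoffman subgraph, which is all that Definition \ref{line} requires. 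Two points you use tacitly and should state: the special matrix of the induced Hoffman subgraph generated by a set of slim vertices equals the corresponding principal submatrix of $S(\ho)$ (because generated subgraphs retain all fat neighbours) --- this is what lets you identify blocks with Hoffman graphs and also what makes each two-clique piece $t$-fat and indecomposable, hence a genuine member of $\Go(t)$; and in (iii) the value $-1$ between distinct $+1$-cliques already follows from (i) and (ii), so the appeal to $m_6(t)$, $m_7(t)$ there is redundant rather than wrong.
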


\section{Associated Hoffman graphs of $\mu$-bounded graphs }\label{3}

In this section, we first give a simplified definition for the associated Hoffman graph of a $\mu$-bounded graph. For a more general definition of associated Hoffman graphs by using quasi-cliques, we refer readers to \cite{KKY}. For $\mu$-bounded graphs, these two definitions are equivalent if the order of each maximal clique in the graph is large enough.



	


 \begin{de}\label{asso}
	Let $q\geq2$ be an integer and $G$ a $\mu$-bounded graph. Let $\mathcal{C}(q)$ := $\{C_1,\dots,C_r\}$ be the set of all maximal cliques in $G$ with at least $q$ vertices. The \emph{associated Hoffman graph $\go=\go(G,q)$} is the Hoffman graph satisfying the following conditions:
	\begin{enumerate}
		\item $V_{\rm slim}(\go) = V(G)$ and $V_{\rm fat}(\go)=\{f_1,\dots,f_r\}$,
		\item the slim graph of $\go$ is equal to $G$,
		\item $f_i$ is adjacent precisely to all vertices in $C_i$ for $i\in[r]$.
	\end{enumerate} 	
\end{de}
	Note that if two distinct slim vertices $u$ and $v$ in an associated Hoffman graph have a common fat neighbor, then $u$ is adjacent to $v$.
	For a $\mu$-bounded graph $G$ with parameter $c$, the following proposition characterizes the adjacency matrix of $G$ if $\go(G,q)$ is a $2$-fat \big\{\raisebox{-1ex}{\begin{tikzpicture}[scale=0.3]
			
			\tikzstyle{every node}=[draw,circle,fill=black,minimum size=10pt,scale=0.3,
			inner sep=0pt]
			
			\draw (-2.1,0) node (1f1) [label=below:$$] {};
			\draw (-1.6,0) node (1f2) [label=below:$$] {};
			\draw (-1.1,0) node (1f3) [label=below:$$] {};

			\tikzstyle{every node}=[draw,circle,fill=black,minimum size=5pt,scale=0.3,
			inner sep=0pt]

			\draw (-1.6,1) node (1s1) [label=below:$$] {};
			
			\draw (1f1) -- (1s1) -- (1f2);
			\draw (1f3) -- (1s1);
	\end{tikzpicture}},\hspace{-0.08cm}
	\raisebox{-1ex}{\begin{tikzpicture}[scale=0.3]
			\tikzstyle{every node}=[draw,circle,fill=black,minimum size=10pt,scale=0.3,
			inner sep=0pt]

			\draw (1.5,0) node (3f1) [label=below:$$] {};
			\draw (1.5,1) node (3f2) [label=below:$$] {};

			\tikzstyle{every node}=[draw,circle,fill=black,minimum size=5pt,scale=0.3,
			inner sep=0pt]

			\draw (1,0.5) node (3s1) [label=below:$$] {};
			\draw (2,0.5) node (3s2) [label=below:$$] {};
			
			\draw (3s1) -- (3s2);
			\draw (3f1) -- (3s1) -- (3f2) -- (3s2) -- (3f1);
	\end{tikzpicture}}\big\}-line Hoffman graph, and thus $\lambda_{\min}(G)\geq -3$.

We denote by ${\bf j}$ the all one vector.
For a vector ${\bf v}$, we denote by $\operatorname*{supp}({\bf v}):=\{r\mid ({\bf v})_r\neq 0 \}$ its support.
For two vectors ${\bf a}=(a_1,\ldots,a_m)^T$ and ${\bf b}=(b_1,\ldots,b_n)^T$, we denote by ${\bf a}\oplus{\bf b}$ the vector $(a_1,\ldots,a_m,b_1,\ldots,b_n)^T$.
	\begin{pro}\label{muassle}
		Let $c$ and $q$ be positive integers.
		Let $G$ be a $\mu$-bounded graph with parameter $c$.
		If the associated graph $\go(G,q)$ is a $2$-fat \big\{\raisebox{-1ex}{\begin{tikzpicture}[scale=0.3]
				
				\tikzstyle{every node}=[draw,circle,fill=black,minimum size=10pt,scale=0.3,
				inner sep=0pt]
				
				\draw (-2.1,0) node (1f1) [label=below:$$] {};
				\draw (-1.6,0) node (1f2) [label=below:$$] {};
				\draw (-1.1,0) node (1f3) [label=below:$$] {};

				\tikzstyle{every node}=[draw,circle,fill=black,minimum size=5pt,scale=0.3,
				inner sep=0pt]

				\draw (-1.6,1) node (1s1) [label=below:$$] {};
				
				\draw (1f1) -- (1s1) -- (1f2);
				\draw (1f3) -- (1s1);
		\end{tikzpicture}},\hspace{-0.08cm}
		\raisebox{-1ex}{\begin{tikzpicture}[scale=0.3]
				\tikzstyle{every node}=[draw,circle,fill=black,minimum size=10pt,scale=0.3,
				inner sep=0pt]

				\draw (1.5,0) node (3f1) [label=below:$$] {};
				\draw (1.5,1) node (3f2) [label=below:$$] {};

				\tikzstyle{every node}=[draw,circle,fill=black,minimum size=5pt,scale=0.3,
				inner sep=0pt]

				\draw (1,0.5) node (3s1) [label=below:$$] {};
				\draw (2,0.5) node (3s2) [label=below:$$] {};
				
				\draw (3s1) -- (3s2);
				\draw (3f1) -- (3s1) -- (3f2) -- (3s2) -- (3f1);
		\end{tikzpicture}}\big\}-line Hoffman graph, 
	then there exists a $(0,\pm 1)$-matrix $N$ whose columns are indexed by the vertex set $V(G)$ and satisfying the following properties:
		\begin{enumerate}
			\item $A(G)+3I=N^TN$ and $\lambda_{\min}(G)\geq -3$,
			\item $N_v^TN_v=3$ and ${\bf j}^TN_v\in\{1,3\}$ for each vertex $v\in V(G)$,
			\item if  ${\bf j}^TN_v=1$, then there is a vertex $u\in V(G)$
			 such that $N_v^TN_u=1$ and $\operatorname*{supp}(N_v)=\operatorname*{supp}(N_u)$.
		\end{enumerate} 

	\end{pro}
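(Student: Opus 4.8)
The plan is to unpack Definition~\ref{line} and to build the matrix $N$ block by block along the decomposition guaranteed by the line-Hoffman hypothesis. The two Hoffman graphs drawn in the statement are $\ho^{(3)}$ (one slim vertex with three fat neighbors) and $\ho_{(2)}$ (two adjacent slim vertices sharing two fat neighbors). Thus the assumption that $\go=\go(G,q)$ is a $2$-fat $\{\ho^{(3)},\ho_{(2)}\}$-line Hoffman graph yields a Hoffman graph $\go^\prime$ with the same slim graph $G$, containing $\go$ as an induced Hoffman subgraph, and admitting a decomposition $\go^\prime=\uplus_{i=1}^{r}\go_i^\prime$ in which each $\go_i^\prime$ is isomorphic to an induced Hoffman subgraph of $\ho^{(3)}$ or of $\ho_{(2)}$. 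First I would record that, since the slim graph of $\go^\prime$ is $G$, its special matrix satisfies $S(\go^\prime)=A(G)-D^TD$, where $D$ is the fat-slim incidence matrix of $\go^\prime$ and $D^TD$ is exactly the matrix of numbers of common fat neighbors; write $d_v$ for the column of $D$ indexed by $v$.

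Next I would pin down the blocks. Because $\go$ is $2$-fat and is an induced Hoffman subgraph of $\go^\prime$ on the same slim vertex set, every slim vertex has at least two fat neighbors in $\go^\prime$, and by condition (iii) of Lemma~\ref{combi} all fat neighbors of a slim vertex lie in its own block. Since each block is an induced Hoffman subgraph of $\ho^{(3)}$ or $\ho_{(2)}$, a slim vertex has at most three fat neighbors, and a block containing two (necessarily adjacent) slim vertices must carry both fat vertices of $\ho_{(2)}$, hence be all of $\ho_{(2)}$. Consequently each $\go_i^\prime$ is one of $\ho^{(2)}$, $\ho^{(3)}$, $\ho_{(2)}$, with special matrix $(-2)$, $(-3)$, or $\left(\begin{smallmatrix}-2&-1\\-1&-2\end{smallmatrix}\right)$; equivalently $S(\go_i^\prime)+3I$ equals $(1)$, $(0)$, or $\left(\begin{smallmatrix}1&-1\\-1&1\end{smallmatrix}\right)$.

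The key point is that each of these three matrices has an obvious $(0,\pm1)$ Gram factorization $S(\go_i^\prime)+3I=B_i^TB_i$ using auxiliary coordinates local to the block: for an $\ho^{(2)}$-block introduce one auxiliary row $g$ and put the $B_i$-column $e_g$; for an $\ho^{(3)}$-block put $B_i=0$; for an $\ho_{(2)}$-block introduce one auxiliary row $g$ and give its two slim vertices the $B_i$-columns $e_g$ and $-e_g$. Stacking these, set $N:=\left(\begin{smallmatrix}D\\B\end{smallmatrix}\right)$ with $B=\bigoplus_{i}B_i$; then $N$ is a $(0,\pm1)$-matrix whose columns are indexed by $V(G)$, and since the auxiliary rows are block-local, $B^TB=\bigoplus_i\big(S(\go_i^\prime)+3I\big)=S(\go^\prime)+3I$. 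Therefore
$$N^TN=D^TD+B^TB=D^TD+\big(A(G)-D^TD+3I\big)=A(G)+3I,$$
which is (i); in particular $A(G)+3I=N^TN$ is positive semidefinite, so $\lambda_{\min}(G)\ge -3$.

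Properties (ii) and (iii) then follow by inspecting the blocks. For $v$ in an $\ho^{(2)}$- or $\ho^{(3)}$-block one has $N_v=d_v\oplus e_g$ (two fat coordinates plus one auxiliary) or $N_v=d_v$ (three fat coordinates), each with three $+1$ entries, so $N_v^TN_v=3$ and ${\bf j}^TN_v=3$; for the two slim vertices $u,v$ of an $\ho_{(2)}$-block, $N_u=d_u\oplus e_g$ and $N_v=d_v\oplus(-e_g)$ with $d_u=d_v$, whence $N_u^TN_u=N_v^TN_v=3$, ${\bf j}^TN_u=3$ and ${\bf j}^TN_v=1$. Hence the vertices with ${\bf j}^TN_v=1$ are exactly the second slim vertices of $\ho_{(2)}$-blocks, and for such a $v$ its block partner $u$ satisfies $\operatorname{supp}(N_v)=\operatorname{supp}(N_u)$ and $N_v^TN_u=d_v^Td_u-1=2-1=1$, giving (iii). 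The step I expect to demand the most care is the block classification: one must invoke the $2$-fat hypothesis together with parts (iii) and (iv) of Lemma~\ref{combi} to exclude blocks in which a slim vertex would have a fat neighbor outside its block, or only one fat neighbor, and to confirm that a two-slim block is the full $\ho_{(2)}$ rather than a proper subgraph. Once the blocks are identified, the factorization and the verification of (i)--(iii) are short computations.
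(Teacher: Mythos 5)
Your proposal is correct and takes essentially the same route as the paper's own proof: both unpack Definition \ref{line} to get a decomposition $\uplus_i \ho^i$ whose blocks are $\ho^{(2)}$, $\ho^{(3)}$, or $\ho_{(2)}$, and both build $N$ by appending block-local $\pm 1$ auxiliary coordinates to the fat-incidence columns $D_v$, then verify (i)--(iii). The only differences are cosmetic — you index auxiliary rows by blocks and check (i) via the global identity $B^TB=S(\go')+3I$ where the paper indexes them by slim vertices and checks entries one by one, and you spell out the block classification (via $2$-fatness and Lemma \ref{combi}(iii)) that the paper asserts without comment.
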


\begin{proof}
 By definition \ref{line},
 there is a Hoffman graph $\ho=\uplus_{i=1}^{t}\ho^i$
 containing $\go(G,q)$ as an induced Hoffman subgraph such that 
 $V_{\rm slim}(\ho)=V_{\rm slim}(\go(G,q))=V(G)$, where $\ho^i$ is isomorphic to a member of \big\{\raisebox{-1ex}{\begin{tikzpicture}[scale=0.3]
 		
 		\tikzstyle{every node}=[draw,circle,fill=black,minimum size=10pt,scale=0.3,
 		inner sep=0pt]
 		
 		\draw (-2.1,0) node (1f1) [label=below:$$] {};
 		\draw (-1.6,0) node (1f2) [label=below:$$] {};
 		\draw (-1.1,0) node (1f3) [label=below:$$] {};

 		\tikzstyle{every node}=[draw,circle,fill=black,minimum size=5pt,scale=0.3,
 		inner sep=0pt]

 		\draw (-1.6,1) node (1s1) [label=below:$$] {};
 		
 		\draw (1f1) -- (1s1) -- (1f2);
 		\draw (1f3) -- (1s1);
 \end{tikzpicture}},\hspace{-0.08cm}
 \raisebox{-1ex}{\begin{tikzpicture}[scale=0.3]
 		
 		\tikzstyle{every node}=[draw,circle,fill=black,minimum size=10pt,scale=0.3,
 		inner sep=0pt]
 		
 		\draw (-2.1,0) node (1f1) [label=below:$$] {};
 		\draw (-1.1,0) node (1f3) [label=below:$$] {};

 		\tikzstyle{every node}=[draw,circle,fill=black,minimum size=5pt,scale=0.3,
 		inner sep=0pt]

 		\draw (-1.6,1) node (1s1) [label=below:$$] {};
 		
 		\draw (1f1) -- (1s1) -- (1f3);
 \end{tikzpicture}},\hspace{-0.08cm}
 \raisebox{-1ex}{\begin{tikzpicture}[scale=0.3]
 		\tikzstyle{every node}=[draw,circle,fill=black,minimum size=10pt,scale=0.3,
 		inner sep=0pt]

 		\draw (1.5,0) node (3f1) [label=below:$$] {};
 		\draw (1.5,1) node (3f2) [label=below:$$] {};

 		\tikzstyle{every node}=[draw,circle,fill=black,minimum size=5pt,scale=0.3,
 		inner sep=0pt]

 		\draw (1,0.5) node (3s1) [label=below:$$] {};
 		\draw (2,0.5) node (3s2) [label=below:$$] {};
 		
 		\draw (3s1) -- (3s2);
 		\draw (3f1) -- (3s1) -- (3f2) -- (3s2) -- (3f1);
 \end{tikzpicture}}\big\} for $i\in[t]$.
By Definition \ref{directsummatrix}, $S(\ho)=A(G)-D^TD$ is a block diagonal matrix with blocks in $\{S(\raisebox{-1ex}{\begin{tikzpicture}[scale=0.3]
		
		\tikzstyle{every node}=[draw,circle,fill=black,minimum size=10pt,scale=0.3,
		inner sep=0pt]
		
		\draw (-2.1,0) node (1f1) [label=below:$$] {};
		\draw (-1.6,0) node (1f2) [label=below:$$] {};
		\draw (-1.1,0) node (1f3) [label=below:$$] {};

		\tikzstyle{every node}=[draw,circle,fill=black,minimum size=5pt,scale=0.3,
		inner sep=0pt]

		\draw (-1.6,1) node (1s1) [label=below:$$] {};
		
		\draw (1f1) -- (1s1) -- (1f2);
		\draw (1f3) -- (1s1);
\end{tikzpicture}})$,
$S(\raisebox{-1ex}{\begin{tikzpicture}[scale=0.3]
		
		\tikzstyle{every node}=[draw,circle,fill=black,minimum size=10pt,scale=0.3,
		inner sep=0pt]
		
		\draw (-2.1,0) node (1f1) [label=below:$$] {};
		\draw (-1.1,0) node (1f3) [label=below:$$] {};

		\tikzstyle{every node}=[draw,circle,fill=black,minimum size=5pt,scale=0.3,
		inner sep=0pt]

		\draw (-1.6,1) node (1s1) [label=below:$$] {};
		
		\draw (1f1) -- (1s1) -- (1f3);
\end{tikzpicture}})$,
$S(\raisebox{-1ex}{\begin{tikzpicture}[scale=0.3]
		\tikzstyle{every node}=[draw,circle,fill=black,minimum size=10pt,scale=0.3,
		inner sep=0pt]

		\draw (1.5,0) node (3f1) [label=below:$$] {};
		\draw (1.5,1) node (3f2) [label=below:$$] {};

		\tikzstyle{every node}=[draw,circle,fill=black,minimum size=5pt,scale=0.3,
		inner sep=0pt]

		\draw (1,0.5) node (3s1) [label=below:$$] {};
		\draw (2,0.5) node (3s2) [label=below:$$] {};
		
		\draw (3s1) -- (3s2);
		\draw (3f1) -- (3s1) -- (3f2) -- (3s2) -- (3f1);
\end{tikzpicture}}\hspace{-0.02cm})\}$, where
$S(\raisebox{-1ex}{\begin{tikzpicture}[scale=0.3]
		
		\tikzstyle{every node}=[draw,circle,fill=black,minimum size=10pt,scale=0.3,
		inner sep=0pt]
		
		\draw (-2.1,0) node (1f1) [label=below:$$] {};
		\draw (-1.6,0) node (1f2) [label=below:$$] {};
		\draw (-1.1,0) node (1f3) [label=below:$$] {};

		\tikzstyle{every node}=[draw,circle,fill=black,minimum size=5pt,scale=0.3,
		inner sep=0pt]

		\draw (-1.6,1) node (1s1) [label=below:$$] {};
		
		\draw (1f1) -- (1s1) -- (1f2);
		\draw (1f3) -- (1s1);
\end{tikzpicture}})=(-3)$, $S(\raisebox{-1ex}{\begin{tikzpicture}[scale=0.3]
		
		\tikzstyle{every node}=[draw,circle,fill=black,minimum size=10pt,scale=0.3,
		inner sep=0pt]
		
		\draw (-2.1,0) node (1f1) [label=below:$$] {};
		\draw (-1.1,0) node (1f3) [label=below:$$] {};

		\tikzstyle{every node}=[draw,circle,fill=black,minimum size=5pt,scale=0.3,
		inner sep=0pt]

		\draw (-1.6,1) node (1s1) [label=below:$$] {};
		
		\draw (1f1) -- (1s1) -- (1f3);
\end{tikzpicture}})=(-2)$, and
$S(\raisebox{-1ex}{\begin{tikzpicture}[scale=0.3]
		\tikzstyle{every node}=[draw,circle,fill=black,minimum size=10pt,scale=0.3,
		inner sep=0pt]

		\draw (1.5,0) node (3f1) [label=below:$$] {};
		\draw (1.5,1) node (3f2) [label=below:$$] {};

		\tikzstyle{every node}=[draw,circle,fill=black,minimum size=5pt,scale=0.3,
		inner sep=0pt]

		\draw (1,0.5) node (3s1) [label=below:$$] {};
		\draw (2,0.5) node (3s2) [label=below:$$] {};
		
		\draw (3s1) -- (3s2);
		\draw (3f1) -- (3s1) -- (3f2) -- (3s2) -- (3f1);
\end{tikzpicture}}\hspace{-0.02cm})=
\begin{pmatrix}
	-2 & -1\\
	-1 & -2
\end{pmatrix}$.

 For $v\in V(G)$, let 
  $D_v$ be the column of $D$ indexed by the vertex $v$.
   Let ${\bf e}_v$ be the vector indexed by the vertices of $V(G)$ such that the $v$-entry is one and the others are zero.
 
   Let $\ho'\in\{\ho^i\}_{i=1}^t$.
 If $\ho'=$ \raisebox{-1ex}{\begin{tikzpicture}[scale=0.3]
 		
 		\tikzstyle{every node}=[draw,circle,fill=black,minimum size=10pt,scale=0.3,
 		inner sep=0pt]
 		
 		\draw (-2.1,0) node (1f1) [label=below:$$] {};
 		\draw (-1.6,0) node (1f2) [label=below:$$] {};
 		\draw (-1.1,0) node (1f3) [label=below:$$] {};

 		\tikzstyle{every node}=[draw,circle,fill=black,minimum size=5pt,scale=0.3,
 		inner sep=0pt]

 		\draw (-1.6,1) node (1s1) [label=below:$$] {};
 		
 		\draw (1f1) -- (1s1) -- (1f2);
 		\draw (1f3) -- (1s1);
 \end{tikzpicture}} and $V_{\rm slim}(\ho')=\{v\}$, then let $N_v=D_v$.
 If $\ho'=$ \raisebox{-1ex}{\begin{tikzpicture}[scale=0.3]
 		
 		\tikzstyle{every node}=[draw,circle,fill=black,minimum size=10pt,scale=0.3,
 		inner sep=0pt]
 		
 		\draw (-2.1,0) node (1f1) [label=below:$$] {};
 		\draw (-1.1,0) node (1f3) [label=below:$$] {};

 		\tikzstyle{every node}=[draw,circle,fill=black,minimum size=5pt,scale=0.3,
 		inner sep=0pt]

 		\draw (-1.6,1) node (1s1) [label=below:$$] {};
 		
 		\draw (1f1) -- (1s1) -- (1f3);
 \end{tikzpicture}} and $V_{\rm slim}(\ho')=\{v\}$, then let $N_v=D_v\oplus{\bf e}_{v}$.
 If $\ho'=$  \raisebox{-1ex}{\begin{tikzpicture}[scale=0.3]
 		\tikzstyle{every node}=[draw,circle,fill=black,minimum size=10pt,scale=0.3,
 		inner sep=0pt]

 		\draw (1.5,0) node (3f1) [label=below:$$] {};
 		\draw (1.5,1) node (3f2) [label=below:$$] {};

 		\tikzstyle{every node}=[draw,circle,fill=black,minimum size=5pt,scale=0.3,
 		inner sep=0pt]

 		\draw (1,0.5) node (3s1) [label=below:$$] {};
 		\draw (2,0.5) node (3s2) [label=below:$$] {};
 		
 		\draw (3s1) -- (3s2);
 		\draw (3f1) -- (3s1) -- (3f2) -- (3s2) -- (3f1);
 \end{tikzpicture}}  and $V_{\rm slim}(\ho')=\{u,v\}$, then let $N_v=D_v\oplus {\bf e}_{v}$ and $N_u=D_u\oplus(-{\bf e}_{v})$.
 Let $N$ be the matrix such that the column indexed by $v$ is $N_v$ for $v\in V(G)$.
 Therefore, $N$ is a $(0,\pm 1)$-matrix satisfying (ii) and (iii).

 Let $u$ and $v$ be two distinct vertices in $V(G)$.
 If $u,v$ are not contained in the same member of $\{\ho^i\}_{i=1}^t$,
 then $(N_u)^TN_v=(D_u)^TD_v$ and $(S(\ho))_{uv}=0$ by Lemma \ref{combi}.
 It follows that $(N_u)^TN_v=(D_u)^TD_v=(A(G)-S(\ho))_{uv}=(A(G))_{uv}$.
 If $u,v$ are contained in the member $\ho'$ of $\{\ho^i\}_{i=1}^t$, then
  $\ho'=  \raisebox{-1ex}{\begin{tikzpicture}[scale=0.3]
 		\tikzstyle{every node}=[draw,circle,fill=black,minimum size=10pt,scale=0.3,
 		inner sep=0pt]

 		\draw (1.5,0) node (3f1) [label=below:$$] {};
 		\draw (1.5,1) node (3f2) [label=below:$$] {};

 		\tikzstyle{every node}=[draw,circle,fill=black,minimum size=5pt,scale=0.3,
 		inner sep=0pt]

 		\draw (1,0.5) node (3s1) [label=below:$$] {};
 		\draw (2,0.5) node (3s2) [label=below:$$] {};
 		
 		\draw (3s1) -- (3s2);
 		\draw (3f1) -- (3s1) -- (3f2) -- (3s2) -- (3f1);
 \end{tikzpicture}} $.
 It follows that $(N_u)^TN_v=(D_u)^TD_v-1$ and $(S(\ho))_{uv}=-1$.
Furthermore, $(N_u)^TN_v=(D_u)^TD_v-1=(A(G)-S(\ho))_{uv}-1=(A(G))_{uv}$.
Therefore, $A(G)+3I=N^TN$, and thus $\lambda_{\min}(G)\geq -3$.
\end{proof}

For a $\mu$-bounded graph $G$ with parameter $c$, the following proposition characterizes the structure of $G$ if $\go(G,q)$ is a $2$-fat \big\{\raisebox{-1ex}{\begin{tikzpicture}[scale=0.3]
		
		\tikzstyle{every node}=[draw,circle,fill=black,minimum size=10pt,scale=0.3,
		inner sep=0pt]
		
		\draw (-2.1,0) node (1f1) [label=below:$$] {};
		\draw (-1.6,0) node (1f2) [label=below:$$] {};
		\draw (-1.1,0) node (1f3) [label=below:$$] {};

		\tikzstyle{every node}=[draw,circle,fill=black,minimum size=5pt,scale=0.3,
		inner sep=0pt]

		\draw (-1.6,1) node (1s1) [label=below:$$] {};
		
		\draw (1f1) -- (1s1) -- (1f2);
		\draw (1f3) -- (1s1);
\end{tikzpicture}},\hspace{-0.08cm}
\raisebox{-1ex}{\begin{tikzpicture}[scale=0.3]
		\tikzstyle{every node}=[draw,circle,fill=black,minimum size=10pt,scale=0.3,
		inner sep=0pt]

		\draw (1.5,0) node (3f1) [label=below:$$] {};
		\draw (1.5,1) node (3f2) [label=below:$$] {};

		\tikzstyle{every node}=[draw,circle,fill=black,minimum size=5pt,scale=0.3,
		inner sep=0pt]

		\draw (1,0.5) node (3s1) [label=below:$$] {};
		\draw (2,0.5) node (3s2) [label=below:$$] {};
		
		\draw (3s1) -- (3s2);
		\draw (3f1) -- (3s1) -- (3f2) -- (3s2) -- (3f1);
\end{tikzpicture}}\big\}-line Hoffman graph.

\begin{pro}\label{muassle2}
		Let $c$ and $q$ be positive integers.
	Let $G$ be a $\mu$-bounded graph with parameter $c$.
	If the associated graph $\go(G,q)$ is a $2$-fat \big\{\raisebox{-1ex}{\begin{tikzpicture}[scale=0.3]
			
			\tikzstyle{every node}=[draw,circle,fill=black,minimum size=10pt,scale=0.3,
			inner sep=0pt]
			
			\draw (-2.1,0) node (1f1) [label=below:$$] {};
			\draw (-1.6,0) node (1f2) [label=below:$$] {};
			\draw (-1.1,0) node (1f3) [label=below:$$] {};

			\tikzstyle{every node}=[draw,circle,fill=black,minimum size=5pt,scale=0.3,
			inner sep=0pt]

			\draw (-1.6,1) node (1s1) [label=below:$$] {};
			
			\draw (1f1) -- (1s1) -- (1f2);
			\draw (1f3) -- (1s1);
	\end{tikzpicture}},\hspace{-0.08cm}
	\raisebox{-1ex}{\begin{tikzpicture}[scale=0.3]
			\tikzstyle{every node}=[draw,circle,fill=black,minimum size=10pt,scale=0.3,
			inner sep=0pt]

			\draw (1.5,0) node (3f1) [label=below:$$] {};
			\draw (1.5,1) node (3f2) [label=below:$$] {};

			\tikzstyle{every node}=[draw,circle,fill=black,minimum size=5pt,scale=0.3,
			inner sep=0pt]

			\draw (1,0.5) node (3s1) [label=below:$$] {};
			\draw (2,0.5) node (3s2) [label=below:$$] {};
			
			\draw (3s1) -- (3s2);
			\draw (3f1) -- (3s1) -- (3f2) -- (3s2) -- (3f1);
	\end{tikzpicture}}\big\}-line Hoffman graph, 
	then there exists a set of cliques $\Co$ in $G$ satisfying the following:
	\begin{enumerate}
		\item $E(G)=\bigcup_{C\in\Co} E(C)$,
		\item for every vertex $u\in V(G)$, there are at most three cliques in $\Co$ containing $u$, where two of them are maximal cliques with order at least $q$,
		\item if there are distinct cliques $C,C'\in\Co$ such that $V(C)\cap V(C')\supset\{u,v\}$, then $C$ and $C'$ are the only two cliques in $\Co$ containing $u$ (resp. $v$) and $V(C)\cap V(C')=\{u,v\}$.
	\end{enumerate}
\end{pro}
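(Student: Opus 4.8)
The plan is to reuse the auxiliary decomposition already constructed in the proof of Proposition \ref{muassle} and to read off the clique cover $\Co$ directly from the fat vertices of the extending Hoffman graph. By Definition \ref{line} there is a Hoffman graph $\ho=\uplus_{i=1}^{t}\ho^i$ containing $\go(G,q)$ as an induced Hoffman subgraph with $V_{\rm slim}(\ho)=V(G)$, where, exactly as in the proof of Proposition \ref{muassle}, each indecomposable block $\ho^i$ is one of the three $2$-fat pieces with special matrix $(-3)$ (one slim, three fat), $(-2)$ (one slim, two fat), or $\left(\begin{smallmatrix}-2&-1\\-1&-2\end{smallmatrix}\right)$ (two adjacent slim vertices sharing two fat vertices). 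By Lemma \ref{combi}(iii) every fat neighbour of a slim vertex lies in that vertex's own block, so each slim vertex has exactly two or three fat neighbours in $\ho$; and since $\go(G,q)$ is $2$-fat and sits inside $\ho$ with the same slim vertex set, at least two of these fat neighbours are fat vertices of $\go(G,q)$, i.e.\ correspond to maximal cliques of $G$ of order at least $q$. For each fat vertex $f$ of $\ho$ I would set $C_f:=N_{\ho}^{\rm slim}(f)$; two slim vertices sharing a fat neighbour are adjacent (clear inside a block, and Lemma \ref{combi}(iv) across blocks), so each $C_f$ is a clique of $G$, and if $f=f_i$ is a fat vertex of $\go(G,q)$ then $C_{f_i}\supseteq C_i$ is a clique containing the maximal clique $C_i$, hence $C_{f_i}=C_i$. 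I would take $\Co:=\{C_f\mid f\in V_{\rm fat}(\ho)\}$.

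To prove (i) and (ii) I would invoke the identity $A(G)=D^{T}D+S(\ho)$ from the proof of Proposition \ref{muassle}, noting that $(D^{T}D)_{uv}$ counts the common fat neighbours of $u$ and $v$ and that $S(\ho)$ is block diagonal. If $uv\in E(G)$ with $u,v$ in different blocks, then $(S(\ho))_{uv}=0$, so $(D^{T}D)_{uv}=1$ and $uv$ lies in the unique common clique $C_f$; if $u,v$ lie in a common block, it must be a block of type $\left(\begin{smallmatrix}-2&-1\\-1&-2\end{smallmatrix}\right)$, where $(S(\ho))_{uv}=-1$ forces $(D^{T}D)_{uv}=2$, and again $uv$ is covered. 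As each edge of every $C_f$ is an edge of $G$, this yields $E(G)=\bigcup_{C\in\Co}E(C)$. For (ii), the number of cliques of $\Co$ through a vertex $u$ equals its number of fat neighbours in $\ho$, which is two or three, and at least two of the associated cliques are the distinct maximal cliques of order $\geq q$ identified in the first paragraph.

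For (iii), suppose distinct $C=C_f$ and $C'=C_{f'}$ satisfy $\{u,v\}\subseteq V(C)\cap V(C')$ with $u\neq v$. Then $u$ and $v$ share the two distinct fat neighbours $f,f'$, so $(D^{T}D)_{uv}\geq 2$; by the case analysis above this can occur only when $u,v$ lie in a common block of type $\left(\begin{smallmatrix}-2&-1\\-1&-2\end{smallmatrix}\right)$, both of whose fat vertices are fat vertices of $\go(G,q)$. Consequently $u$ (and symmetrically $v$) has exactly the two fat neighbours $f,f'$, so $C$ and $C'$ are the only cliques of $\Co$ through $u$ (resp.\ $v$), and they are distinct maximal cliques of order $\geq q$. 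Finally, any $w\in V(C)\cap V(C')$ is adjacent to both $f$ and $f'$; since the block $\{u,v\}$ has no third slim vertex, such a $w$ would lie in another block and share two fat neighbours with $u$ across blocks, contradicting Lemma \ref{combi}(iv). Hence $V(C)\cap V(C')=\{u,v\}$, which completes (iii).

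The routine part is the edge-covering count via $A(G)=D^{T}D+S(\ho)$. The delicate point will be the bookkeeping that links the fat vertices of the auxiliary graph $\ho$ to the maximal cliques encoded by $\go(G,q)$: one must verify that $2$-fatness survives into $\ho$ so that every slim vertex keeps at least two fat neighbours coming from $\go(G,q)$, and that such a fat vertex recovers \emph{exactly} its maximal clique by the maximality argument. The second recurring tool, used to pin down the intersection pattern in (iii), is Lemma \ref{combi}(iv), which forbids two slim vertices in distinct blocks from having two common fat neighbours; this is precisely what forces any double intersection to come from a single $\left(\begin{smallmatrix}-2&-1\\-1&-2\end{smallmatrix}\right)$-block and to be exactly $\{u,v\}$.
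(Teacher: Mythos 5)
Your proposal is correct and takes essentially the same route as the paper's own proof: both construct $\Co$ from the slim neighbourhoods of the fat vertices of the ambient decomposition $\ho=\uplus_{i=1}^{t}\ho^i$ supplied by Definition \ref{line}, prove cliqueness and the edge cover via the identity $A(G)=D^TD+S(\ho)$ together with Lemma \ref{combi}, and establish (iii) by using Lemma \ref{combi}(iv) to force any two-point intersection into a single block of type $\left(\begin{smallmatrix}-2&-1\\-1&-2\end{smallmatrix}\right)$, whose two slim vertices then account for exactly the intersection and for all cliques of $\Co$ through them. If anything, your handling of (ii) — noting that a fat vertex $f_i$ of $\go(G,q)$ satisfies $C_{f_i}=C_i$ by maximality, so that two of the at most three cliques through each vertex are maximal of order at least $q$ — spells out a step the paper leaves implicit.
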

\begin{proof}
By definition \ref{line},
there is a Hoffman graph $\ho=\uplus_{i=1}^{t}\ho^i$
containing $\go(G,q)$ as an induced Hoffman subgraph such that 
$V_{\rm slim}(\ho)=V_{\rm slim}(\go(G,q))=V(G)$, where $\ho^i$ is isomorphic to a member of \big\{\raisebox{-1ex}{\begin{tikzpicture}[scale=0.3]
		
		\tikzstyle{every node}=[draw,circle,fill=black,minimum size=10pt,scale=0.3,
		inner sep=0pt]
		
		\draw (-2.1,0) node (1f1) [label=below:$$] {};
		\draw (-1.6,0) node (1f2) [label=below:$$] {};
		\draw (-1.1,0) node (1f3) [label=below:$$] {};

		\tikzstyle{every node}=[draw,circle,fill=black,minimum size=5pt,scale=0.3,
		inner sep=0pt]

		\draw (-1.6,1) node (1s1) [label=below:$$] {};
		
		\draw (1f1) -- (1s1) -- (1f2);
		\draw (1f3) -- (1s1);
\end{tikzpicture}},\hspace{-0.08cm}
\raisebox{-1ex}{\begin{tikzpicture}[scale=0.3]
		
		\tikzstyle{every node}=[draw,circle,fill=black,minimum size=10pt,scale=0.3,
		inner sep=0pt]
		
		\draw (-2.1,0) node (1f1) [label=below:$$] {};
		\draw (-1.1,0) node (1f3) [label=below:$$] {};

		\tikzstyle{every node}=[draw,circle,fill=black,minimum size=5pt,scale=0.3,
		inner sep=0pt]

		\draw (-1.6,1) node (1s1) [label=below:$$] {};
		
		\draw (1f1) -- (1s1) -- (1f3);
\end{tikzpicture}},\hspace{-0.08cm}
\raisebox{-1ex}{\begin{tikzpicture}[scale=0.3]
		\tikzstyle{every node}=[draw,circle,fill=black,minimum size=10pt,scale=0.3,
		inner sep=0pt]

		\draw (1.5,0) node (3f1) [label=below:$$] {};
		\draw (1.5,1) node (3f2) [label=below:$$] {};

		\tikzstyle{every node}=[draw,circle,fill=black,minimum size=5pt,scale=0.3,
		inner sep=0pt]

		\draw (1,0.5) node (3s1) [label=below:$$] {};
		\draw (2,0.5) node (3s2) [label=below:$$] {};
		
		\draw (3s1) -- (3s2);
		\draw (3f1) -- (3s1) -- (3f2) -- (3s2) -- (3f1);
\end{tikzpicture}}\big\} for $i\in[t]$.
 By Definition \ref{directsummatrix}, $S(\ho)=A(G)-D^TD$ is a block diagonal matrix with blocks in $\{S(\raisebox{-1ex}{\begin{tikzpicture}[scale=0.3]
		
		\tikzstyle{every node}=[draw,circle,fill=black,minimum size=10pt,scale=0.3,
		inner sep=0pt]
		
		\draw (-2.1,0) node (1f1) [label=below:$$] {};
		\draw (-1.6,0) node (1f2) [label=below:$$] {};
		\draw (-1.1,0) node (1f3) [label=below:$$] {};

		\tikzstyle{every node}=[draw,circle,fill=black,minimum size=5pt,scale=0.3,
		inner sep=0pt]

		\draw (-1.6,1) node (1s1) [label=below:$$] {};
		
		\draw (1f1) -- (1s1) -- (1f2);
		\draw (1f3) -- (1s1);
\end{tikzpicture}})$,
$S(\raisebox{-1ex}{\begin{tikzpicture}[scale=0.3]
		
		\tikzstyle{every node}=[draw,circle,fill=black,minimum size=10pt,scale=0.3,
		inner sep=0pt]
		
		\draw (-2.1,0) node (1f1) [label=below:$$] {};
		\draw (-1.1,0) node (1f3) [label=below:$$] {};

		\tikzstyle{every node}=[draw,circle,fill=black,minimum size=5pt,scale=0.3,
		inner sep=0pt]

		\draw (-1.6,1) node (1s1) [label=below:$$] {};
		
		\draw (1f1) -- (1s1) -- (1f3);
\end{tikzpicture}})$,
$S(\raisebox{-1ex}{\begin{tikzpicture}[scale=0.3]
		\tikzstyle{every node}=[draw,circle,fill=black,minimum size=10pt,scale=0.3,
		inner sep=0pt]

		\draw (1.5,0) node (3f1) [label=below:$$] {};
		\draw (1.5,1) node (3f2) [label=below:$$] {};

		\tikzstyle{every node}=[draw,circle,fill=black,minimum size=5pt,scale=0.3,
		inner sep=0pt]

		\draw (1,0.5) node (3s1) [label=below:$$] {};
		\draw (2,0.5) node (3s2) [label=below:$$] {};
		
		\draw (3s1) -- (3s2);
		\draw (3f1) -- (3s1) -- (3f2) -- (3s2) -- (3f1);
\end{tikzpicture}}\hspace{-0.02cm})\}$, where
$S(\raisebox{-1ex}{\begin{tikzpicture}[scale=0.3]
		
		\tikzstyle{every node}=[draw,circle,fill=black,minimum size=10pt,scale=0.3,
		inner sep=0pt]
		
		\draw (-2.1,0) node (1f1) [label=below:$$] {};
		\draw (-1.6,0) node (1f2) [label=below:$$] {};
		\draw (-1.1,0) node (1f3) [label=below:$$] {};

		\tikzstyle{every node}=[draw,circle,fill=black,minimum size=5pt,scale=0.3,
		inner sep=0pt]

		\draw (-1.6,1) node (1s1) [label=below:$$] {};
		
		\draw (1f1) -- (1s1) -- (1f2);
		\draw (1f3) -- (1s1);
\end{tikzpicture}})=(-3)$, $S(\raisebox{-1ex}{\begin{tikzpicture}[scale=0.3]
		
		\tikzstyle{every node}=[draw,circle,fill=black,minimum size=10pt,scale=0.3,
		inner sep=0pt]
		
		\draw (-2.1,0) node (1f1) [label=below:$$] {};
		\draw (-1.1,0) node (1f3) [label=below:$$] {};

		\tikzstyle{every node}=[draw,circle,fill=black,minimum size=5pt,scale=0.3,
		inner sep=0pt]

		\draw (-1.6,1) node (1s1) [label=below:$$] {};
		
		\draw (1f1) -- (1s1) -- (1f3);
\end{tikzpicture}})=(-2)$, and
$S(\raisebox{-1ex}{\begin{tikzpicture}[scale=0.3]
		\tikzstyle{every node}=[draw,circle,fill=black,minimum size=10pt,scale=0.3,
		inner sep=0pt]

		\draw (1.5,0) node (3f1) [label=below:$$] {};
		\draw (1.5,1) node (3f2) [label=below:$$] {};

		\tikzstyle{every node}=[draw,circle,fill=black,minimum size=5pt,scale=0.3,
		inner sep=0pt]

		\draw (1,0.5) node (3s1) [label=below:$$] {};
		\draw (2,0.5) node (3s2) [label=below:$$] {};
		
		\draw (3s1) -- (3s2);
		\draw (3f1) -- (3s1) -- (3f2) -- (3s2) -- (3f1);
\end{tikzpicture}}\hspace{-0.02cm})=
\begin{pmatrix}
	-2 & -1\\
	-1 & -2
\end{pmatrix}$.

For a fat vertex $f$ in $\ho$, let $V_f=\{u\in V(G)\mid (D)_{fu}=1\}$.
 Let $C_f$ be the subgraph of $G$ induced by $V_f$ for $f\in V_{\rm fat}(\ho)$.
 Assume that there are two distinct non-adjacent vertices $u_1,u_2$ in $C_f$, then
 $u_1,u_2$ have one common fat neighbor.
 Hence, $u_1,u_2$ must be contained in a Hoffman graph $\ho'\in\{\ho^i\}_{i=1}^t$
 by Lemma \ref{combi} (iv).
  It follows that 
 $\ho'=\raisebox{-1ex}{\begin{tikzpicture}[scale=0.3]
 		\tikzstyle{every node}=[draw,circle,fill=black,minimum size=10pt,scale=0.3,
 		inner sep=0pt]

 		\draw (1.5,0) node (3f1) [label=below:$$] {};
 		\draw (1.5,1) node (3f2) [label=below:$$] {};

 		\tikzstyle{every node}=[draw,circle,fill=black,minimum size=5pt,scale=0.3,
 		inner sep=0pt]

 		\draw (1,0.5) node (3s1) [label=below:$$] {};
 		\draw (2,0.5) node (3s2) [label=below:$$] {};
 		
 		\draw (3s1) -- (3s2);
 		\draw (3f1) -- (3s1) -- (3f2) -- (3s2) -- (3f1);
 \end{tikzpicture}}$, which contradicts that $u_1,u_2$ are non-adjacent.
 Therefore, $C_f$ is a clique in $G$ for $f\in V_{\rm fat}(\ho)$. 

 Let $\Co:=\{C_f\mid f\in V_{\rm fat}(\ho)\}$. 
 Assume that there is an edge $uv$ not contained in any clique in $\Co$, then $(D_{u})^TD_{v}=0$. It follows that  $(S(\ho))_{uv}=(S(\ho))_{uv}+(D_{u})^TD_{v}=(A(G))_{uv}=1$.
 However, every off-diagonal entry of $S(\ho)$ is not equal to $1$.
  Hence, (i) holds.
  
 As $\go(G,q)$ is $2$-fat, each vertex in $V(G)$ is contained in at least two maximal cliques with order at least $q$ by Definition \ref{asso}.
 For $v\in V(G)$, the Hoffman graph in $\{\ho^i\}_{i=1}^t$ containing $v$
  is isomorphic to a member of \big\{\raisebox{-1ex}{\begin{tikzpicture}[scale=0.3]
 		
 		\tikzstyle{every node}=[draw,circle,fill=black,minimum size=10pt,scale=0.3,
 		inner sep=0pt]
 		
 		\draw (-2.1,0) node (1f1) [label=below:$$] {};
 		\draw (-1.6,0) node (1f2) [label=below:$$] {};
 		\draw (-1.1,0) node (1f3) [label=below:$$] {};

 		\tikzstyle{every node}=[draw,circle,fill=black,minimum size=5pt,scale=0.3,
 		inner sep=0pt]

 		\draw (-1.6,1) node (1s1) [label=below:$$] {};
 		
 		\draw (1f1) -- (1s1) -- (1f2);
 		\draw (1f3) -- (1s1);
 \end{tikzpicture}},\hspace{-0.08cm}
 \raisebox{-1ex}{\begin{tikzpicture}[scale=0.3]
 		
 		\tikzstyle{every node}=[draw,circle,fill=black,minimum size=10pt,scale=0.3,
 		inner sep=0pt]
 		
 		\draw (-2.1,0) node (1f1) [label=below:$$] {};
 		\draw (-1.1,0) node (1f3) [label=below:$$] {};

 		\tikzstyle{every node}=[draw,circle,fill=black,minimum size=5pt,scale=0.3,
 		inner sep=0pt]

 		\draw (-1.6,1) node (1s1) [label=below:$$] {};
 		
 		\draw (1f1) -- (1s1) -- (1f3);
 \end{tikzpicture}},\hspace{-0.08cm}
 \raisebox{-1ex}{\begin{tikzpicture}[scale=0.3]
 		\tikzstyle{every node}=[draw,circle,fill=black,minimum size=10pt,scale=0.3,
 		inner sep=0pt]

 		\draw (1.5,0) node (3f1) [label=below:$$] {};
 		\draw (1.5,1) node (3f2) [label=below:$$] {};

 		\tikzstyle{every node}=[draw,circle,fill=black,minimum size=5pt,scale=0.3,
 		inner sep=0pt]

 		\draw (1,0.5) node (3s1) [label=below:$$] {};
 		\draw (2,0.5) node (3s2) [label=below:$$] {};
 		
 		\draw (3s1) -- (3s2);
 		\draw (3f1) -- (3s1) -- (3f2) -- (3s2) -- (3f1);
 \end{tikzpicture}}\big\}. Therefore, each vertex is contained in at most three cliques in $\Co$, and (ii) holds.

If there are two cliques $C_{f_1}, C_{f_2}\in\Co$ such that $V_{f_1}\cap V_{f_2}$
 contains two distinct vertices $u,v$, then $u$ and $v$ have two common fat neighbors.
 Hence, $V_{f_1}\cap V_{f_2}$ must be contained in a Hoffman graph $\ho''\in\{\ho_i\}_{i=1}^t$ by Lemma \ref{combi} (iv).
 It follows that 
  $\ho''=\raisebox{-1ex}{\begin{tikzpicture}[scale=0.3]
		\tikzstyle{every node}=[draw,circle,fill=black,minimum size=10pt,scale=0.3,
		inner sep=0pt]

		\draw (1.5,0) node (3f1) [label=below:$$] {};
		\draw (1.5,1) node (3f2) [label=below:$$] {};

		\tikzstyle{every node}=[draw,circle,fill=black,minimum size=5pt,scale=0.3,
		inner sep=0pt]

		\draw (1,0.5) node (3s1) [label=below:$$] {};
		\draw (2,0.5) node (3s2) [label=below:$$] {};
		
		\draw (3s1) -- (3s2);
		\draw (3f1) -- (3s1) -- (3f2) -- (3s2) -- (3f1);
\end{tikzpicture}}$. This implies that $C_{f_1},C_{f_2}$ are the only two cliques in $\Co$ containing $u$ (resp. $v$)
 and $V_{f_1}\cap V_{f_2}=\{u,v\}$.
Therefore, (iii) holds.
\end{proof}

	As an application of Proposition \ref{muassle2}, the following lemma characterizes local graphs of $G$.

	

\begin{cor}
		Let $c$ and $q$ be positive integers.
	Let $G$ be a $\mu$-bounded graph with parameter $c$.
	If the associated graph $\go(G,q)$ is a $2$-fat \big\{\raisebox{-1ex}{\begin{tikzpicture}[scale=0.3]
			
			\tikzstyle{every node}=[draw,circle,fill=black,minimum size=10pt,scale=0.3,
			inner sep=0pt]
			
			\draw (-2.1,0) node (1f1) [label=below:$$] {};
			\draw (-1.6,0) node (1f2) [label=below:$$] {};
			\draw (-1.1,0) node (1f3) [label=below:$$] {};

			\tikzstyle{every node}=[draw,circle,fill=black,minimum size=5pt,scale=0.3,
			inner sep=0pt]

			\draw (-1.6,1) node (1s1) [label=below:$$] {};
			
			\draw (1f1) -- (1s1) -- (1f2);
			\draw (1f3) -- (1s1);
	\end{tikzpicture}},\hspace{-0.08cm}
	\raisebox{-1ex}{\begin{tikzpicture}[scale=0.3]
			\tikzstyle{every node}=[draw,circle,fill=black,minimum size=10pt,scale=0.3,
			inner sep=0pt]

			\draw (1.5,0) node (3f1) [label=below:$$] {};
			\draw (1.5,1) node (3f2) [label=below:$$] {};

			\tikzstyle{every node}=[draw,circle,fill=black,minimum size=5pt,scale=0.3,
			inner sep=0pt]

			\draw (1,0.5) node (3s1) [label=below:$$] {};
			\draw (2,0.5) node (3s2) [label=below:$$] {};
			
			\draw (3s1) -- (3s2);
			\draw (3f1) -- (3s1) -- (3f2) -- (3s2) -- (3f1);
	\end{tikzpicture}}\big\}-line Hoffman graph, 
	then $c\leq 18$ . In particular, if $G$ is regular, then $c\leq 12$.
\end{cor}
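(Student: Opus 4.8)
The plan is to apply Proposition~\ref{muassle2} to obtain a set of cliques $\Co$ covering $E(G)$, and then bound, for an arbitrary pair of distinct non-adjacent vertices $x,y$, the size of $W:=N(x)\cap N(y)$; the assertions $c\le 18$ and (in the regular case) $c\le 12$ are exactly the statements $|W|\le 18$ and $|W|\le 12$. First I would set up the counting. Since $x\not\sim y$, no clique of $\Co$ contains both $x$ and $y$. For each $w\in W$ the edge $xw$ lies in some $C\in\Co$ with $x,w\in C$ and the edge $yw$ lies in some $C'\in\Co$ with $y,w\in C'$. Using property (iii) of Proposition~\ref{muassle2} one checks that $w$ and $x$ share a \emph{unique} clique of $\Co$: if they shared two, then by (iii) those would be the only two cliques through $w$ and both would contain $x$, contradicting $w\sim y$. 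Thus $w\mapsto(C^x_w,C^y_w)$ is a well-defined map from $W$ to the set of pairs (clique through $x$, clique through $y$). By (ii) there are at most $3\times 3=9$ such pairs; all $w$ in the fibre over a pair $(C,C')$ lie in $C\cap C'$, and since $C\neq C'$, property (iii) forces $|C\cap C'|\le 2$, so every fibre has size at most $2$. This yields $|W|\le 9\cdot 2=18$, hence $c\le 18$.

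For the regular case I would sharpen the fibre count by analysing which pairs are \emph{doubly covered}, i.e. have a fibre of size $2$. Such a pair satisfies $C\cap C'=\{w,w'\}$ with $w,w'\in W$, and by (iii) these $w,w'$ lie in exactly the two cliques $C,C'$; hence $d(w)=|C|+|C'|-3$, so $k$-regularity gives $|C|+|C'|=k+3$ for every doubly covered pair. Writing $m:=k+3$, $\alpha_i:=|A_i|$ for the cliques $A_1,\dots,A_a$ through $x$ and $\gamma_j:=m-|B_j|$ for the cliques $B_1,\dots,B_b$ through $y$ (with $a,b\le 3$), a doubly covered pair $(A_i,B_j)$ forces $\alpha_i=\gamma_j$. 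The degree identities at $x$ and $y$, together with the Delsarte bound $|B_j|\le 1+k/3<m$, give $\sum_i\alpha_i=k+O(1)$, $\sum_j\gamma_j=2k+O(1)$, and $0<\gamma_j<m$, $\alpha_i>0$. Consequently any two $\alpha_i$ sum to less than $\sum_i\alpha_i\approx m$ while any two $\gamma_j$ sum to more than $m$, so at most one $\alpha_i$ exceeds $m/2$ and at most one $\gamma_j$ falls below $m/2$. Sorting the doubly covered pairs by whether the common value $\alpha_i=\gamma_j$ is above, equal to, or below $m/2$, one sees that all pairs above $m/2$ share the single large $\alpha_i$, all pairs below $m/2$ share the single small $\gamma_j$, and a positivity check (a fourth such pair would force some clique size $\le 0$ or $>m$) caps the number of doubly covered pairs at $3$. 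Hence $|W|\le 2\cdot 3+1\cdot 6=12$, i.e. $c\le 12$.

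I expect the main obstacle to be the final bookkeeping: the value constraints $\sum_i\alpha_i$ and $\sum_j\gamma_j$ depend on how many cliques $x$ and $y$ lie in (two or three) and on the precise overlap structure of those cliques, so the cap of $3$ must be verified uniformly across these sub-cases and across the possible coincidences among the clique sizes. The clean organising principle should be the ``at most one large $\alpha_i$, at most one small $\gamma_j$'' dichotomy, which reduces every sub-case to showing that a putative fourth doubly covered pair forces either a non-positive clique size or a clique of size exceeding $k+3$, both impossible.
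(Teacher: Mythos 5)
Your proposal is correct and takes essentially the same route as the paper: both invoke Proposition \ref{muassle2}, bound $|N(x)\cap N(y)|$ by summing $|V(A)\cap V(B)|\le 2$ over the at most $3\times 3$ pairs of cliques through $x$ and $y$ (giving $18$), and in the regular case both reduce to the observation that a ``doubly covered'' pair forces $|A_i|+|B_j|=k+3$ (the paper's $x_i+y_j=k+1$), after which the degree identities $\sum_i|A_i|=\sum_j|B_j|=k+3$ rule out four such pairs by a short arithmetic check (which the paper leaves as ``directly confirmed'' and you organize via the large/small dichotomy — that check does close, e.g.\ two large-row pairs plus two small-column pairs force $\gamma_{j^*}=0$). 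One blemish: your appeal to the ``Delsarte bound'' $|B_j|\le 1+k/3$ is illegitimate here, since $G$ is only a regular $\mu$-bounded graph and Lemma \ref{dels} requires distance-regularity; however, the only consequence you use, $0<\gamma_j<k+3$, follows trivially from $1\le |B_j|\le k+1$, so the argument is unharmed.
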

\begin{proof}
	Let $\Co$ be a set of cliques satisfying (i)-(iii) of Proposition \ref{muassle2}.
		Let $\Co(v):=\{C\in\Co\mid v\in C\}$ for $v\in V(G)$.
	Let $u_1$ and $u_2$ be two non-adjacent vertices in $G$.
	
	Proposition \ref{muassle2} (ii) shows that $|\Co(u_i)|\leq 3$ for $i=1,2$, and $|V(A)\cap V(B)|\leq 2$ for $A\in\Co(u_1)$ and $B\in\Co(u_2)$.
	Hence, $|N(u_1)\cap N(u_2)|\leq \sum_{A\in\Co(u_1)}\sum_{B\in\Co(u_2)}|V(A)\cap V(B)|\leq 2|\Co(u_1)||\Co(u_2)|=2\times3\times3=18$.
	
	Assume that $G$ is $k$-regular, and   $|N(u_1)\cap N(u_2)|\geq 13$. It follows that $|\Co(u_1)|=|\Co(u_2)|=3$, otherwise $|N(u_1)\cap N(u_2)|\leq 2\times 2\times 3=12$. Let $\Co(u_1)=\{A_1,A_2,A_3\}$, $\Co(u_2)=\{B_1,B_2,B_3\}$,  $x_i:=|V(A_i)|-1$, and $y_i:=|V(B_i)|-1$ for $i=1,2,3$. 
	Note that $x_1+x_2+x_3=y_1+y_2+y_3=k$.
	As $|N(u_1)\cap N(u_2)|\geq 13$, there are at least 4 pairs of $(i,j)$ such that $|V(A_i)\cap V(B_j)|=2$ and $x_i+y_j=k+1$. It can be directly confirmed that there does not exist a solution $(x_1,x_2,x_3,y_1,y_2,y_3)$ satisfying these equations. 
\end{proof}

\begin{lem}\label{9}
		Let $c$ and $q$ be positive integers. 
	Let $G$ be a $\mu$-bounded graph with parameter $c$.
	If the associated graph $\go(G,q)$ is a $2$-fat \big\{\raisebox{-1ex}{\begin{tikzpicture}[scale=0.3]
			
			\tikzstyle{every node}=[draw,circle,fill=black,minimum size=10pt,scale=0.3,
			inner sep=0pt]
			
			\draw (-2.1,0) node (1f1) [label=below:$$] {};
			\draw (-1.6,0) node (1f2) [label=below:$$] {};
			\draw (-1.1,0) node (1f3) [label=below:$$] {};

			\tikzstyle{every node}=[draw,circle,fill=black,minimum size=5pt,scale=0.3,
			inner sep=0pt]

			\draw (-1.6,1) node (1s1) [label=below:$$] {};
			
			\draw (1f1) -- (1s1) -- (1f2);
			\draw (1f3) -- (1s1);
	\end{tikzpicture}},\hspace{-0.08cm}
	\raisebox{-1ex}{\begin{tikzpicture}[scale=0.3]
			\tikzstyle{every node}=[draw,circle,fill=black,minimum size=10pt,scale=0.3,
			inner sep=0pt]

			\draw (1.5,0) node (3f1) [label=below:$$] {};
			\draw (1.5,1) node (3f2) [label=below:$$] {};

			\tikzstyle{every node}=[draw,circle,fill=black,minimum size=5pt,scale=0.3,
			inner sep=0pt]

			\draw (1,0.5) node (3s1) [label=below:$$] {};
			\draw (2,0.5) node (3s2) [label=below:$$] {};
			
			\draw (3s1) -- (3s2);
			\draw (3f1) -- (3s1) -- (3f2) -- (3s2) -- (3f1);
	\end{tikzpicture}}\big\}-line Hoffman graph,
	then there exists a vertex $u$ in $G$ such that $|N(u)\cap N(v)|\leq 9$ for each vertex $v$ non-adjacent to $u$.
\end{lem}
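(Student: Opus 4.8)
The plan is to run the co-degree estimate behind the preceding corollary, but to extract an extra factor by choosing the base vertex $u$ with care. I would start from Proposition \ref{muassle2}: it produces a set of cliques $\Co$ covering all edges of $G$ such that every vertex lies in at most three members of $\Co$ (condition (ii)), and any two distinct members meet in at most two vertices, a two-vertex meeting $V(C)\cap V(C')=\{w,w'\}$ occurring only when $C,C'$ are the only cliques of $\Co$ through $w$ and through $w'$ (condition (iii)). Call such a pair $\{C,C'\}$ a \emph{double attachment}. For non-adjacent $u,v$ every common neighbour lies in a clique of $\Co$ through $u$ and in one through $v$, so
\[ |N(u)\cap N(v)|\le \sum_{A\in\Co(u)}\sum_{B\in\Co(v)}|V(A)\cap V(B)|, \]
where by (iii) each summand is at most $2$, and equals $2$ exactly when $\{A,B\}$ is a double attachment. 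This is precisely the inequality used for the bound $2|\Co(u)||\Co(v)|\le 18$.

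The key point I would then isolate is that if every clique of $\Co$ through $u$ is \emph{clean}, meaning it meets every other member of $\Co$ in at most one vertex, then no pair $(A,B)$ with $A\in\Co(u)$ is a double attachment, so every summand is at most $1$ and
\[ |N(u)\cap N(v)|\le |\Co(u)|\cdot|\Co(v)|\le 3\cdot 3=9 \]
for \emph{every} non-neighbour $v$. Thus the whole lemma reduces to exhibiting a single vertex all of whose cliques in $\Co$ are clean; this is exactly where the $9=3\times 3$ of the statement originates, the two threes being the at-most-three cliques per vertex from (ii).

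The main work — and the step I expect to be the genuine obstacle — is therefore the existence of a vertex lying only in clean cliques. Here I would lean on the fact that every member of $\Co$ through a vertex has order at least $q$, which is large. The double attachments inside a fixed clique $A$ use pairwise \emph{vertex-disjoint} pairs $\{w,w'\}$, disjoint because such a $w$ has exactly the two cliques $A$ and its partner by (iii); and for a fixed $A$ the second cliques of the vertices of $A$ are pairwise distinct, since two vertices of $A$ sharing a second clique would doubly attach $A$ to it. Fixing a clique $A$ of order at least $q$, I would try to bound the number of $u\in A$ for which $A$ or the second clique of $u$ is not clean, and argue this number is strictly smaller than $|A|$, leaving a clean-cliqued vertex. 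Making this count close — controlling how many of the $\ge q$ distinct second cliques can themselves carry a double attachment — is the crux, and is exactly the place where the size $q$ and the disjointness of the double attachments must be combined; once it is secured, the two displays above immediately yield the bound $9$.
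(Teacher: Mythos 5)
Your reduction is set up correctly: the co-degree bound coming from Proposition \ref{muassle2}, and the observation that a vertex $u$ all of whose cliques in $\Co$ are clean satisfies $|N(u)\cap N(v)|\le |\Co(u)|\cdot|\Co(v)|\le 9$ for every non-neighbour $v$, are both right. The gap is the step you yourself flag as the crux, and it is not merely unfinished: the statement you would need --- that some vertex lies only in clean cliques --- is false in general, so no counting inside a large clique can establish it. Take $G$ to be the union of $m\ge 6$ cliques $C_1,\dots,C_m$, each a $K_4$, with $C_i=\{a_{i-1},b_{i-1},a_i,b_i\}$ and indices modulo $m$, so that consecutive cliques share the edge $\{a_i,b_i\}$ and non-consecutive cliques are disjoint. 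This $G$ is $\mu$-bounded with parameter $2$, its maximal cliques are exactly the $C_i$, and $\go(G,4)$ decomposes into the induced Hoffman subgraphs generated by the pairs $\{a_i,b_i\}$, each isomorphic to the family member with two adjacent slim vertices joined to the same two fat vertices; so the hypothesis of the lemma holds with $q=4$. Moreover, any admissible $\Co$ must contain every $C_i$, since condition (ii) of Proposition \ref{muassle2} forces two maximal cliques of order at least $q$ through each vertex, and $C_i,C_{i+1}$ are the only candidates through $a_i$. Hence every clique of $\Co$ carries two double attachments and every vertex of $G$ lies inside one: there is no clean vertex, although the lemma is trivially true here because $c=2$. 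A related flaw: your counting plan leans on $q$ being large, but $q$ is an arbitrary positive integer in the statement, so largeness of the cliques in $\Co$ is not available. (Note also that if the fixed clique $A$ is itself non-clean, then \emph{every} $u\in A$ fails your criterion, so your count could only start from a clean $A$, which the example shows need not exist.)

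The paper's proof makes the opposite choice of $u$. If some $C_1,C_2\in\Co$ satisfy $V(C_1)\cap V(C_2)=\{u,u'\}$, it takes $u$ to be one of the two attached vertices, so that $\Co(u)=\{C_1,C_2\}$ by condition (iii). For a non-neighbour $v$, if $|N(u)\cap N(v)|\ge 10$, then $|\Co(v)|=3$ and at least four of the six pairs $(A_i,C_j)$ with $A_i\in\Co(v)$ must meet in exactly two vertices; writing $x_i=|V(A_i)|-1$ and $y_j=|V(C_j)|-1$, each such pair gives $x_i+y_j=k+1$ (the two shared vertices lie in no further cliques of $\Co$), while $x_1+x_2+x_3=k$ and $y_1+y_2-1=k$, where $k$ is the degree, and these equations admit no solution. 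If no two cliques of $\Co$ meet in two vertices, then every vertex works --- that is exactly your clean case. So the missing idea is to place $u$ \emph{inside} a double attachment rather than to seek a vertex avoiding all of them.
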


\begin{proof}
	
 	Let $\Co$ be a set of cliques satisfying (i)-(iii) of Proposition \ref{muassle2}.
 Let $\Co(w):=\{C\in\Co\mid w\in C\}$ for $w\in V(G)$.
	
	If there are two cliques $C_1$ and $C_2$ in $\Co$ such that $V(C_1)\cap V(C_2)=\{u,u'\}$, then $u$ is the desired vertex.
	 We will show that $|N(u)\cap N(v)|\leq 9$ for each vertex $v$ non-adjacent to $u$.	
	Note that $\Co(u)=\{C_1,C_2\}$ by Proposition \ref{muassle2} (iii).
	Assume that there exists a vertex $v\notin N(u)$ such that $|N(u)\cap N(v)|\geq 10$.
	It follows that $|\Co(v)|=3$ , otherwise $|N(u)\cap N(v)|\leq 2\times 2\times 2=8$. Let $A_1,A_2,A_3$ be the cliques in $\Co(v)$. Let  $x_i:=|V(A_i)|-1$ and $y_j:=|V(C_j)|-1$ for $i=1,2,3$ and $j=1,2$. 
	Note that $x_1+x_2+x_3=k=y_1+y_2-1$.
	As $|N(u)\cap N(v)|\geq 10$, there are at least 4 pairs of $(i,j)$ such that $|V(A_i)\cap V(C_j)|=2$ and $x_i+y_j=k+1$. It can be directly confirmed that there does not exist a solution $(x_1,x_2,x_3,y_1,y_2)$ satisfying these equations.
	
	If each pair of distinct cliques in $\Co$ intersects in at most one vertex, then $c\leq 3 \times 3\times 1=9$ by Proposition \ref{muassle2} (ii). In other words, for each vertex $w$ in $G$, it shares at most $9$ common neighbors with every vertex non-adjacent to $w$. This shows the existence of $u$.
\end{proof}


 The following result states that for a fixed smallest eigenvalue, each vertex outside a big enough clique has either a large number of neighbors or a large number of  non-neighbors in that clique. 

 \begin{lem}[{\cite[Lemma 1.2]{YK24}}]\label{Hat}
 	Let $\lambda\geq1$ be an integer and $G$ a graph with $\lambda_{\min}(G) \geq - \lambda$. Let $C$ be a clique in $G$ with order $\omega$. If $\omega\geq n_1(\lambda):=\lambda^4-2\lambda^3+3\lambda^2-3\lambda+3$, then each vertex outside $C$ has either at most $\lambda(\lambda-1)$ neighbors in $C$ or at least $\omega-(\lambda-1)^2$ neighbors in $C$.
 \end{lem}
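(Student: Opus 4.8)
The plan is to prove the gap phenomenon directly from the positive semidefiniteness of $A(G)+\lambda I$. First I would dispose of the degenerate case $\lambda=1$ separately: since $\lambda_{\min}(G)\geq -1$ forces every connected component of $G$ to be a clique, a vertex $u\notin C$ lies either in the component containing $C$, in which case it is adjacent to all $\omega=\omega-(\lambda-1)^2$ vertices of $C$, or in another component, in which case it has $0=\lambda(\lambda-1)$ neighbors in $C$; the statement holds trivially. So assume $\lambda\geq 2$. Because $\lambda_{\min}(G)\geq -\lambda$, the matrix $A(G)+\lambda I$ is positive semidefinite and admits a Gram representation by vectors $\{\mathbf{w}_x\}_{x\in V(G)}$ with $\mathbf{w}_x\cdot\mathbf{w}_x=\lambda$, with $\mathbf{w}_x\cdot\mathbf{w}_y=1$ when $x\sim y$, and with $\mathbf{w}_x\cdot\mathbf{w}_y=0$ when $x\neq y$ and $x\not\sim y$.

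Write $C=\{c_1,\dots,c_\omega\}$, put $m:=\lambda+\omega-1$, and set $\mathbf{p}:=\tfrac1\omega\sum_i\mathbf{w}_{c_i}$. A short computation gives $\mathbf{p}\cdot\mathbf{p}=m/\omega$ and $\mathbf{w}_{c_i}\cdot\mathbf{p}=m/\omega$ for every $i$, so the centered vectors $\mathbf{q}_i:=\mathbf{w}_{c_i}-\mathbf{p}$ are orthogonal to $\mathbf{p}$, satisfy $\sum_i\mathbf{q}_i=\mathbf{0}$, and have Gram matrix $\tfrac{\lambda-1}{\omega}(\omega I-J)$, which acts as $(\lambda-1)I$ on the orthogonal complement of the all-ones vector. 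Fixing a vertex $u\notin C$ with exactly $a$ neighbors in $C$, the key step is to bound $\|\mathbf{w}_u\|^2=\lambda$ below by the squared norms of the orthogonal projections of $\mathbf{w}_u$ onto $\mathbf{p}$ and onto $\operatorname{span}(\mathbf{q}_1,\dots,\mathbf{q}_\omega)$. Using $\mathbf{w}_u\cdot\mathbf{p}=a/\omega$ and the fact that the vector $(\mathbf{w}_u\cdot\mathbf{q}_i)_i$ is already orthogonal to the all-ones vector, one finds $\sum_i(\mathbf{w}_u\cdot\mathbf{q}_i)^2=a(\omega-a)/\omega$, and hence
\[
\lambda=\|\mathbf{w}_u\|^2\ \geq\ \frac{a^2}{\omega m}+\frac{a(\omega-a)}{(\lambda-1)\omega}.
\]

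Clearing denominators collapses this to the clean quadratic constraint
\[
a(m-a)\ \leq\ \lambda(\lambda-1)\,m .
\]
Since $g(a):=a(m-a)$ is concave, its admissible values split into the two ranges $a\leq a_-$ and $a\geq a_+$ determined by the roots of $a(m-a)=\lambda(\lambda-1)m$. Rather than estimate these (non-integer) roots, I would finish by integrality: it suffices to show that no integer $a$ lies strictly between $\lambda(\lambda-1)$ and $\omega-(\lambda-1)^2$, i.e. that $g(a)>\lambda(\lambda-1)m$ for every integer $a$ in $[\lambda(\lambda-1)+1,\ \omega-(\lambda-1)^2-1]$. By concavity the minimum over this interval is attained at an endpoint, and the two endpoints $\lambda(\lambda-1)+1$ and $\omega-(\lambda-1)^2-1$ sum to $m$, so they are symmetric about $m/2$ and $g$ takes the same value at both. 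Writing $b:=\lambda(\lambda-1)$, the single check $g(b+1)>bm$ is equivalent to $m>(b+1)^2$, that is, $\omega>\lambda^4-2\lambda^3+3\lambda^2-3\lambda+2$, which is guaranteed by the hypothesis $\omega\geq n_1(\lambda)=\lambda^4-2\lambda^3+3\lambda^2-3\lambda+3$.

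I expect the main obstacle to be the exact calibration of $n_1(\lambda)$. The eigenvalue inequality only yields the soft thresholds $a_\pm$, which are not integers and sit slightly inside the claimed window; the whole point is that $\omega\geq n_1(\lambda)$ is tuned so that the endpoint inequality $m>(\lambda(\lambda-1)+1)^2$ holds with exactly one unit to spare, pinning the integer part of $a_-$ to $\lambda(\lambda-1)$ and, by the symmetry $g(a)=g(m-a)$, simultaneously forcing the upper threshold to be $\omega-(\lambda-1)^2$. Verifying that both endpoint conditions reduce to the \emph{same} quartic inequality (and that concavity legitimately reduces the interval check to its endpoints) is the delicate bookkeeping; by contrast, the linear-algebraic set-up and the projection identity $\sum_i(\mathbf{w}_u\cdot\mathbf{q}_i)^2=a(\omega-a)/\omega$ are routine once the Gram vectors are in place.
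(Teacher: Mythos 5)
Your proof is correct; I checked every step. The Gram computations ($\mathbf{p}\cdot\mathbf{p}=m/\omega$, the Gram matrix $\tfrac{\lambda-1}{\omega}(\omega I-J)$ of the centered vectors, and $\sum_i(\mathbf{w}_u\cdot\mathbf{q}_i)^2=a(\omega-a)/\omega$) are right, the projection bound is legitimate because the coefficient vector $(\mathbf{w}_u\cdot\mathbf{q}_i)_i$ lies in $\mathbf{1}^\perp$ where the Gram matrix acts as $(\lambda-1)I$, and the ``collapse'' is in fact an exact algebraic equivalence: multiplying $\lambda\geq \frac{a^2}{\omega m}+\frac{a(\omega-a)}{(\lambda-1)\omega}$ by $(\lambda-1)\omega m$ and using $m-\lambda+1=\omega$ gives precisely $a(m-a)\leq\lambda(\lambda-1)m$. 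The endpoint bookkeeping also checks out: the endpoints sum to $m$, and $g(b+1)>bm\iff m>(b+1)^2\iff \omega>\lambda^4-2\lambda^3+3\lambda^2-3\lambda+2$, which is exactly what $\omega\geq n_1(\lambda)$ buys (with one unit to spare, as you say). One point of comparison: this paper does not prove the lemma at all --- it imports it verbatim from \cite[Lemma 1.2]{YK24} --- so there is no in-paper proof to match against. The argument in the cited source (and the standard way this kind of ``gap'' lemma is proved) runs through interlacing: the partition of $C\cup\{u\}$ into neighbors of $u$, non-neighbors of $u$, and $\{u\}$ is equitable in the induced subgraph, and requiring $\det(B+\lambda I)\geq 0$ for the $3\times 3$ quotient matrix $B$ yields, after simplification, exactly your inequality $\lambda(\lambda-1)m-a(m-a)\geq 0$. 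So your Gram-vector route is a genuinely different derivation of the same quadratic constraint; it is slightly longer to set up but more self-contained (no appeal to equitable partitions or quotient-matrix interlacing), and your integrality-plus-symmetry finish is cleaner than estimating the two irrational roots directly.
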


 The following result is a version for $\mu$-bounded graphs of \cite[Proposition 4.1]{KKY}, which shows
 the relation between graphs and associated Hoffman graphs.
\begin{pro}\label{assohoff}
	Let $ \phi, \sigma$ and $p$ be non-negative integers.
	Let $\lambda, c$ be positive integers and $\tilde{c}:=\min\{c,\lambda(\lambda-1)\}$. Let $n_1(\lambda):=\lambda^4-2\lambda^3+3\lambda^2-3\lambda+3$.
	There exists a positive integer $n_2(\phi, \sigma,  p,\tilde{c}):=\tilde{c}(\sigma-1)+\tilde{c}(p+1)(\phi-1)+p+1$ such that for each integer $q \geq \max\{n_1(\lambda),n_2(\phi, \sigma,  p,\tilde{c}),(\lambda-1)^2+c+1\}$,
	every Hoffman graph $\ho$ with $\phi$ fat vertices and $\sigma$ slim vertices and every graph $G$ with $\lambda_{\min}(G)\geq -\lambda$,
	 the graph  $G(\ho, p)$ is an induced subgraph of $G$ if the graph  $G$ satisfies the following conditions:
	\begin{enumerate}
		\item the  graph $G$ is a $\mu$-bounded graph with parameter $c$,
		
		\item the associated Hoffman graph $\mathfrak{g}(G, q)$ contains $\ho$ as an induced Hoffman subgraph.
	
	\end{enumerate}
\end{pro}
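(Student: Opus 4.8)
The plan is to read off the ingredients of $G(\ho,p)$ that are already present in $G$ from the induced embedding of $\ho$ in $\go(G,q)$, and then to construct the $p$-cliques that replace the fat vertices by a greedy selection inside the large maximal cliques of $G$, keeping the number of forbidden choices in each clique below $q-p$. First I would fix notation. Write the $\sigma$ slim vertices of $\ho$ as $x_1,\dots,x_\sigma$; since $\ho$ is an induced Hoffman subgraph of $\go(G,q)$, these are genuine vertices of $G$ whose induced subgraph in $G$ is the slim graph of $\ho$. The $\phi$ fat vertices of $\ho$ correspond, by Definition \ref{asso}, to distinct maximal cliques $C_1,\dots,C_\phi$ of $G$, each of order at least $q$, with the property that $x_i \sim f_j$ in $\ho$ exactly when $x_i \in C_j$. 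Producing $G(\ho,p)$ as an induced subgraph then reduces to choosing, for every $j\in[\phi]$, a $p$-subset $K_j\subseteq C_j$ (automatically a $p$-clique) such that $K_j$ avoids every $x_i$, has no neighbour among the $x_i\notin C_j$, and is disjoint from and non-adjacent to $K_{j'}$ for $j'\neq j$. Such a choice realises precisely the adjacencies of $G(\ho,p)$, because the vertices of $K_j$ are automatically adjacent to every $x_i\in C_j$ (the slim neighbours of $f_j$) and to one another.

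The key quantitative step, which I expect to be the heart of the argument, is the claim that for every maximal clique $C$ of order at least $q$ and every vertex $u\notin C$ one has $|N(u)\cap C|\le\tilde{c}$. By maximality of $C$ there is a vertex $w\in C$ with $w\not\sim u$, and every neighbour of $u$ in $C$ is then a common neighbour of the non-adjacent pair $u,w$; hence $|N(u)\cap C|\le c$ by $\mu$-boundedness. Since $q\ge(\lambda-1)^2+c+1$ gives $c<|C|-(\lambda-1)^2$, Lemma \ref{Hat} (applicable because $|C|\ge q\ge n_1(\lambda)$) rules out its ``many neighbours'' alternative and forces $|N(u)\cap C|\le\lambda(\lambda-1)$. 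Combining, $|N(u)\cap C|\le\min\{c,\lambda(\lambda-1)\}=\tilde{c}$. In particular $|C_j\cap C_{j'}|\le\tilde{c}$ for distinct cliques, and any vertex chosen outside a given $C_j$ has at most $\tilde{c}$ neighbours in it.

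Finally I would run the selection greedily, choosing $K_1,\dots,K_\phi$ in order and maintaining the invariant that each $K_{j'}$ is disjoint from every $C_{j''}$ with $j''\neq j'$. When selecting $K_j$ inside $C_j$, the forbidden vertices are: the slim vertices lying in $C_j$ together with the at most $\tilde{c}$ neighbours in $C_j$ of each slim vertex outside $C_j$, which (as $f_j$ has at least one slim neighbour, so at least one $x_i\in C_j$) totals at most $\tilde{c}(\sigma-1)+1$; and, for each of the $\phi-1$ other cliques, at most $(p+1)\tilde{c}$ vertices, namely $C_j\cap C_{j'}$ of size at most $\tilde{c}$ plus the at most $p\,\tilde{c}$ neighbours in $C_j$ of the $p$ already-selected vertices of $K_{j'}$, the symmetric non-adjacency with a later $K_{j'}$ being imposed when $K_{j'}$ itself is chosen. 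The forbidden set thus has size at most $\tilde{c}(\sigma-1)+1+\tilde{c}(p+1)(\phi-1)=n_2-p$, so at least $|C_j|-(n_2-p)\ge q-(n_2-p)\ge p$ admissible vertices remain and $K_j$ can be chosen. The set $\{x_1,\dots,x_\sigma\}\cup\bigcup_{j}K_j$ then induces a copy of $G(\ho,p)$. The main obstacle is really the bookkeeping in this last step: one must verify that the crude per-clique bound $(p+1)\tilde{c}$ together with the slim-vertex count matches $n_2$ exactly, and that handling overlaps $C_j\cap C_{j'}$ and the deferred non-adjacency constraints for not-yet-chosen cliques does not create a hidden obstruction, all of which rests on the uniform bound $|N(u)\cap C|\le\tilde{c}$ established above.
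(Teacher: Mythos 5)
Your proposal is correct and follows the paper's strategy almost verbatim: the same key estimate (for a maximal clique $C$ with $|C|\ge q$ and $u\notin C$, maximality plus $\mu$-boundedness give $|N(u)\cap C|\le c$, and Lemma~\ref{Hat} then rules out the many-neighbours alternative, so $|N(u)\cap C|\le\tilde c$), followed by selecting the $p$-cliques inside $C_1,\dots,C_\phi$ one at a time with exactly the $n_2$ bookkeeping. The only structural difference is that the paper organizes the selection as an induction on $\phi$ (invoking the statement for $\ho-f_\phi$ to obtain cliques $D_1,\dots,D_{\phi-1}$, then carving $D_\phi$ out of $C_\phi$), whereas you run an explicit greedy loop. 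Your version is in fact tighter at one point: the paper's inductive hypothesis gives no control on $D_i\cap C_\phi$, yet its bound of $\tilde c\,p(\phi-1)$ on $\bigl|V(C_\phi)\cap\bigcup_{i\ne\phi}\bigcup_{y\in D_i}N_G(y)\bigr|$ — and the required non-adjacency between $D_i$ and $D_\phi\subseteq C_\phi$ — silently use $D_i\cap C_\phi=\emptyset$; even a single $y\in D_i\cap C_\phi$ would be adjacent to all of $C_\phi\setminus\{y\}$ and collapse the count. Your invariant that each chosen $K_{j'}$ avoids every $C_{j''}$ with $j''\ne j'$, enforced by excluding $C_j\cap C_{j'}$ for all $j'\ne j$ (future cliques included), is precisely what legitimizes both the count and the deferred non-adjacency constraints, so your write-up patches the one loose point in the paper's argument.
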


\begin{proof}	
    For $\phi=0$, the proposition holds.
    Assume that the proposition holds for $\phi-1$, 
    we will show that the proposition holds for $\phi\geq 1$.
    
    Let $V_{\rm fat}(\ho)=\{f_i\}_{i=1}^{\phi}$ and $V_{\rm slim}(\ho)=\{s_i\}_{i=1}^{\sigma}$.
    As $\ho$ is an induced Hoffman subgraph of $\go(G,q)$, there is a set $\{C_i\}_{i=1}^{\phi}$ of the maximal cliques with order at least $q$ in $G$ corresponding to $\{f_i\}_{i=1}^{\phi}$.
   Additionally, the slim graph of $\ho$ is an induced subgraph of $G$ 
   as  the slim graph of $\go(G,q)$ is $G$.
  
  Let $\ho':=\ho-f_\phi$ be the Hoffman graph obtained by deleting the fat vertex $f_\phi$ from $\ho$. 
  As $n_2(\phi, \sigma,  p,\tilde{c})\geq n_2(\phi-1, \sigma,  p,\tilde{c})$, by induction, $G(\ho', p)$ is an induced subgraph of $G$.
   In other words,   
   there exist cliques $D_1,\ldots,D_{\phi-1}$ such that 
   \begin{enumerate}
   \item[($a_1$)] for $i\in[\phi-1]$, $V(D_i)\cap V_{\rm slim}(\ho)=\emptyset$,  $V(D_i)\subset V(C_i)$, and $|V(D_i)|\geq q$, 
   \item[($a_2$)]  for distinct $i,j\in[\phi-1]$, $V(D_i)\cap V(D_j)=\emptyset$, and no edge in $G$ exists between $D_i$ and $D_j$,
   \item[($a_3$)] for $\ell\in[\sigma]$ and $i\in[\phi-1]$, the slim vertex  $s_\ell$ is adjacent to all vertices of $D_i$ if $s_\ell$ is adjacent to the fat vertex $f_i$, and $s_\ell$ has no neighbors in $D_i$ if $s_\ell$ is not adjacent to $f_i$.
   \end{enumerate}

 As $G$ is a $\mu$-bounded graph with parameter $c$ and $|V(C_i)|\geq q\geq \max\{n_1(\lambda),(\lambda-1)^2+c+1\}$ for $i\in[\phi]$. Lemma \ref{Hat} implies that $|N_G(x)\cap V(C_i)|\leq \tilde{c}$ for every vertex $x$ outside $C_i$. Thus, $|V(C_i)\cap V(C_j)|\leq \tilde{c}$ for distinct $i,j\in [\phi]$.
 By Definition \ref{asso}, 
  $s_\ell\in V(C_i)$ if $s_\ell$ is adjacent to $f_i$, for $\ell\in[\sigma]$ and $ i\in[\phi]$. Hence, the slim vertex $s_\ell$ has at most $\tilde{c}$ neighbors in $C_i$ if $s_\ell$ is not adjacent to $f_i$.
  
   Let $A_\phi:=\{s\in V_{\rm slim}(\ho)\mid$  $s$ is adjacent to $f_\phi\}$ and $A_\phi^*:= V_{\rm slim}(\ho)-A_\phi$. Note that $|A_\phi|+\tilde{c}|A_\phi^*|\leq 1+\tilde{c}(\sigma-1)$ as $|A_\phi|+|A_\phi^*|=\sigma$ and $\tilde{c}\geq 1$.  
   It follows that  $|V(C_{\phi})-A_\phi-\bigcup_{x\in A_\phi^*}N_G(x)-\bigcup_{i\neq \phi}\bigcup_{y\in D_i}N_G(y)-\bigcup_{i\neq \phi}V(C_i)|\geq q-(1+\tilde{c}(\sigma-1))-\tilde{c} p(\phi-1)-\tilde{c}(\phi-1)\geq p $ holds.
   Therefore, there is a clique $D_\phi$ satisfying
   \begin{enumerate}
   	\item[($b_1)$] 	$|V(D_\phi)|=p$ and $V(D_\phi)\subset V(C_{\phi})-A_\phi-\bigcup_{x\in A_{\phi}^*}N_G(x)-\bigcup_{i\neq \phi}\bigcup_{y\in D_i}N_G(y)-\bigcup_{i\neq \phi}V(C_i)$,
   	\item[($b_2$)]    for $i\in[\phi-1]$,  $V(D_\phi)\cap V_{\rm slim}(\ho)=\emptyset$, $V(D_i)\cap V(D_\phi)=\emptyset$, and no edge in $G$ exists between $D_i$ and $D_\phi$, 
   	\item[($b_3$)] for $\ell\in[\sigma]$, the slim vertex $s_\ell$ is adjacent to all vertices of $D_\phi$ if $s_\ell$ is adjacent to the fat vertex $f_\phi$, and $s_\ell$ has no neighbors in $D_\phi$ if $s_\ell$ is not adjacent to $f_\phi$.
   \end{enumerate}
   It follows that the subgraph  in $G$ induced by {$(\bigcup_{i=1}^\phi V(D_i))\cup\{s_1,\dots,s_\phi\}$} is isomorphic to $G(\ho, p)$.
\end{proof}

\begin{thm}\label{forbiddenthm}
	Let $c,\tilde{c}$ and $q$ be positive integers such that $\tilde{c}=\min\{c,6\}$ and $q\geq \max\{c+5, 50\tilde{c}+16\}$. If $G$ is a $\mu$-bounded graph with parameter $c$ and $\lambda_{\min}(G)\geq -3$, then the associated Hoffman graph $\go(G,q)$ does not contain an induced Hoffman subgraph with special matrix in $\mathcal{M}(2)$.
\end{thm}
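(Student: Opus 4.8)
The plan is to argue by contradiction. Suppose that $\go(G,q)$ contains an induced Hoffman subgraph whose special matrix lies in $\mathcal{M}(2)$. Because $\go(G,q)$ is an associated Hoffman graph, any two slim vertices sharing a common fat neighbor are adjacent (the remark following Definition~\ref{asso}), so $\go(G,q)$ meets the hypothesis of Lemma~\ref{m2}. That lemma then forces $\go(G,q)$ to contain, as an induced Hoffman subgraph, one of the nine graphs of $\Ho=\{\ho_{1,-2},\ho_{3,1},\ho_{3,-1},\ho_{4,-2},\ho_{5},\ho_6,\ho_7,\ho_{8}^{(1)},\ho_8^{(2)}\}$. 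I fix such a member and call it $\ho$, with $\phi$ fat and $\sigma$ slim vertices.

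Next I would upgrade the abstract occurrence of $\ho$ inside $\go(G,q)$ to a genuine induced subgraph of $G$. Applying Proposition~\ref{assohoff} with $\lambda=3$ makes the parameters match the statement: $\tilde c=\min\{c,\lambda(\lambda-1)\}=\min\{c,6\}$, while $n_1(3)=48$ and $(\lambda-1)^2+c+1=c+5$. For the clique-blow-up size $p$ I take the value attached to $\ho$ by Proposition~\ref{cal}. Since $G$ is $\mu$-bounded with parameter $c$, satisfies $\lambda_{\min}(G)\ge-3=-\lambda$, and has $\go(G,q)$ containing $\ho$ as an induced Hoffman subgraph, Proposition~\ref{assohoff} yields that $G(\ho,p)$ is an induced subgraph of $G$, as soon as $q$ exceeds the threshold $\max\{n_1(3),\,n_2(\phi,\sigma,p,\tilde c),\,c+5\}$.

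The contradiction is then immediate. Proposition~\ref{cal} says precisely that $\lambda_{\min}(G(\ho,p))<-3$ for the chosen $p$, and Cauchy interlacing gives $\lambda_{\min}(G)\le\lambda_{\min}(G(\ho,p))<-3$ because $G(\ho,p)$ is an induced subgraph of $G$. This contradicts $\lambda_{\min}(G)\ge-3$, so no induced Hoffman subgraph of $\go(G,q)$ can have special matrix in $\mathcal{M}(2)$.

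The main obstacle, and the only place where real bookkeeping is needed, is to certify that the single hypothesis $q\ge\max\{c+5,\,50\tilde c+16\}$ dominates the threshold $\max\{n_1(3),\,n_2(\phi,\sigma,p,\tilde c),\,c+5\}$ uniformly over all nine members. Since $n_1(3)=48\le 50\tilde c+16$ and the two $c+5$ terms coincide, this comes down to the finite check $n_2(\phi,\sigma,p,\tilde c)=\tilde c(\sigma-1)+\tilde c(p+1)(\phi-1)+p+1\le 50\tilde c+16$ for each graph in $\Ho$, read off from its fat/slim counts together with the $p$-values $7,7,13,5,11,5,15,8,11$ supplied by Proposition~\ref{cal}. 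The most expensive embeddings are those members carrying the most fat vertices with the largest $p$; for instance $\ho_7$ (with $\sigma=3$, $\phi=4$, $p=15$) gives $n_2=50\tilde c+16$ exactly, which is the source of the constant in the statement, and one verifies the remaining members individually.
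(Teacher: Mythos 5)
Your proposal is correct and takes essentially the same route as the paper's proof: reduce via Lemma \ref{m2} to the family $\Ho$, apply Proposition \ref{assohoff} with $\lambda=3$ (so $\tilde{c}=\min\{c,6\}$, $n_1(3)=48$, and $(\lambda-1)^2+c+1=c+5$) to realize $G(\ho,p)$ as an induced subgraph of $G$, and contradict $\lambda_{\min}(G)\geq-3$ via Proposition \ref{cal} and eigenvalue monotonicity under induced subgraphs. Your bookkeeping also coincides with the paper's, which checks $q\geq\max\{c+5,50\tilde{c}+16\}$ against the nine thresholds $n_2(\phi,\sigma,p,\tilde{c})$ and likewise has $n_2(4,3,15,\tilde{c})=50\tilde{c}+16$ (the graph $\ho_7$) as the extremal case.
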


\begin{proof}
By Definition \ref{asso}, two distinct slim vertices $u$ and $v$ in $\go(G,q)$ are adjacent if they have at least one common fat neighbor.
Assume that $\go(G,q)$ contains an induced Hoffman subgraph with special matrix in $\mathcal{M}(2)$.
Lemma \ref{m2} shows that $\go(G,q)$ contains an induced Hoffman subgraph in $\Ho=\{\ho_{1,-2},\ho_{3,1},\ho_{3,-1},\ho_{4,-2},\ho_{5},\ho_6,\ho_7,\ho_{8}^{(1)},\ho_8^{(2)}\}$.

Since $\lambda_{\min}(G)\geq-3$, by Proposition \ref{cal}, $G$ does not contain a member of $\{G(\ho_{1,-2},7)$, $G(\ho_{3,1},7)$, $G(\ho_{3,-1},13)$, $G(\ho_{4,-2},5)$, $G(\ho_{5},11)$, $G(\ho_{6},5)$, $G(\ho_{7},15)$, $G(\ho_{8}^{(1)},8)$, $G(\ho_{8}^{(2)},11)\}$ as an induced subgraph.
Note that $q\geq \max\{c+5, 50\tilde{c}+16\}\geq  \max\{n_2(4,1,7,\tilde{c})$, $n_2(5,2,7,\tilde{c})$, $n_2(3,2,13,\tilde{c})$, $n_2(3,2,5,\tilde{c})$, $n_2(2,3,11,\tilde{c})$, $n_2(4,3,5,\tilde{c})$, $n_2(4,3,15,\tilde{c})$, $n_2(6,3,8,\tilde{c})$, $n_2(5,3,11,\tilde{c})\}$, and $50\tilde{c}+16\geq n_1(3)=48$. It follows that $\go(G,q)$ does not contain an induced subgraph isomorphic to a member of $\Ho$ by Proposition \ref{assohoff}, which contradicts with Lemma \ref{m2}.
\end{proof}

\section{Main theorems}
In this section, we complete the proof of Theorem~\ref{intro2}. The following key lemma is inspired by the Bose--Laskar method, which was first introduced by Bose and Laskar in \cite{BL1967} in 1967. Using this approach, they established fundamental relationships between claws and cliques in edge-regular graphs, leading to a characterization of tetrahedral graphs. Subsequently, Huang \cite{Huang1987} applied this method to characterize bilinear forms graphs. Metsch later refined the Bose--Laskar method and applied it to improve Bruck's completion theorem \cite{Metsch1991}. Based on Metsch's refinement, further characterizations were obtained, including those of Grassmann graphs \cite{KLG2025,Metsch1995} and bilinear forms graphs \cite{Metsch1999}. The Bose--Laskar method is also closely related to the claw bound. This bound was first proved by Metsch \cite[Lemma~1.1]{Metsch1991}, and was later rediscovered by Godsil \cite[Lemma~2.3]{Godsil1993} and by Koolen and Park \cite[Lemma~2]{KP2010}. The claw bound plays an important role in the characterization of strongly regular graphs; see, for example, \cite{KLMP2025,Neumaier1979}.

\begin{lem}\label{blhlem}
	Let $c$ and $r$ be positive integers. If $G$ is a $\mu$-bounded graph with parameter $c$ and $\lambda_{\min}(G)\geq -\lambda$,
	then each vertex $x\in V(G)$ lies in a maximal clique $C_1(x)$ of order at least $\frac{d(x)-\binom{\lfloor\lambda^2\rfloor}{2}(c-1)}{\lfloor\lambda^2\rfloor}+1$.    
	Moreover, if every maximal clique containing $x$ has order at most $d(x)-r$, then $x$ lies in another distinct maximal clique $C_2(x)$ of order at least $\frac{r-\binom{\lfloor\lambda^2\rfloor}{2}(c-1)+1}{\lfloor\lambda^2\rfloor-1}+1$.
\end{lem}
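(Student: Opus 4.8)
The plan is to pass to the local graph $H:=G[N(x)]$ and translate the eigenvalue hypothesis into a bound on its independence number. First I would observe that if $\{y_1,\dots,y_t\}\subseteq N(x)$ are pairwise non-adjacent, then $x,y_1,\dots,y_t$ induce a star $K_{1,t}$, whose smallest eigenvalue is $-\sqrt{t}$; by eigenvalue interlacing for induced subgraphs, $-\lambda\le\lambda_{\min}(G)\le-\sqrt{t}$, so $t\le\lambda^2$ and hence $\alpha(H)\le\lfloor\lambda^2\rfloor=:m$. I would also record the inherited co-degree bound: if $u,v\in N(x)$ are non-adjacent, then among their at most $c$ common neighbours in $G$ one is $x\notin N(x)$, so $u,v$ have at most $c-1$ common neighbours inside $H$.

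The core is a Bose--Laskar style clique partition of $N(x)$. Fix a \emph{maximum} independent set $I=\{u_1,\dots,u_s\}$ of $H$, so $s=\alpha(H)\le m$. Call a vertex of $N(x)$ \emph{bad} if it is adjacent to at least two vertices of $I$, and otherwise place it in a class $Z_i$, where $Z_i$ collects $u_i$ together with the vertices adjacent to $u_i$ and to no other $u_j$ (maximality of $I$ guarantees every vertex lies in some class or is bad). Each bad vertex is a common $H$-neighbour of some non-adjacent pair $u_i,u_j$, so the co-degree bound gives at most $\binom{s}{2}(c-1)\le\binom{m}{2}(c-1)$ bad vertices. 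The key point is that each $Z_i$ is a clique: if $w,w'\in Z_i\setminus\{u_i\}$ were non-adjacent, then $\{w,w'\}\cup(I\setminus\{u_i\})$ would be an independent set of size $s+1$, contradicting $s=\alpha(H)$. Pigeonhole over the $\le m$ cliques $Z_i$ then yields one of size at least $\frac{d(x)-\binom{m}{2}(c-1)}{m}$; adjoining $x$ and extending to a maximal clique proves the first assertion with $C_1(x)$.

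For the ``moreover'' statement I would reuse the \emph{same} partition. Under the hypothesis, every $Z_i\cup\{x\}$ extends to a maximal clique of order at most $d(x)-r$, so $|Z_i|\le d(x)-r-1$ for each $i$. Since $\sum_{i=1}^{s}|Z_i|=d(x)-|B|$, subtracting off the single largest class leaves the remaining $s-1\le m-1$ classes summing to at least $(d(x)-|B|)-(d(x)-r-1)=r+1-|B|\ge r+1-\binom{m}{2}(c-1)$; pigeonhole over these $\le m-1$ cliques produces a second class of size at least $\frac{r-\binom{m}{2}(c-1)+1}{m-1}$, whose extension gives $C_2(x)$. Distinctness is automatic: the two chosen classes contain $u_i$ and $u_j$ with $i\ne j$, which are non-adjacent, so they cannot both lie inside one clique, forcing $C_1(x)\ne C_2(x)$. (Here $s\ge 2$, since $s=1$ would make $N(x)$ a clique and give a maximal clique of order $d(x)+1>d(x)-r$, against the hypothesis.)

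The two steps I expect to be delicate are exactly the ones where the stated constants are pinned down. The first is insisting that $I$ be a \emph{maximum} (not merely maximal) independent set: this is what forces each $Z_i$ to be a clique and is the crux of the argument. The second is recognising that the denominator $m-1$ in the second bound does \emph{not} come from lowering the independence number on $N(x)$ after deleting $C_1(x)$ (which need not hold), but rather from discarding the largest class after applying the cap $|Z_i|\le d(x)-r-1$; keeping the bookkeeping of the $\binom{m}{2}(c-1)$ term and the trailing $+1$ consistent between the two parts is the main place to be careful.
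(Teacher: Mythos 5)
Your proof is correct and follows essentially the same route as the paper's: the interlacing/star bound forcing $\alpha(G[N(x)])\le\lfloor\lambda^2\rfloor$, the set of "bad" vertices bounded by $\binom{\lfloor\lambda^2\rfloor}{2}(c-1)$ via the $\mu$-bound, the clique partition coming from a maximum independent set, and the pigeonhole with the largest class capped at $d(x)-r-1$ for the second clique. Your explicit treatment of distinctness of the two maximal cliques and of the degenerate case $s=1$ are details the paper leaves implicit, but the argument is the same.
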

\begin{proof} 
	Since $\lambda_{\min}(K_{1,\lfloor\lambda^2+1\rfloor})=-\sqrt{\lfloor\lambda^2+1\rfloor}<-\lambda$, $G$ is induced $K_{1,\lfloor\lambda^2+1\rfloor}$-free
	 by interlacing theorem. Hence, the maximum order of independent sets in $N(x)$ is at most $\lfloor\lambda^2\rfloor$ for any vertex $x \in V(G)$.	
	Let $I=\{v_1,v_2,...,v_{s}\}$ be a maximum independent set in $N(x)$, where $s\leq \lfloor\lambda^2\rfloor $. 
	
	Let $W=\{y \in N(x)\mid |N(y) \cap I| \geq 2 \}$.
	 As each pair of two distinct non-adjacent vertices has at most $c$ common neighbors, $|W|\leq {s \choose 2}(c-1)\leq {\lfloor\lambda^2\rfloor \choose 2}(c-1)$.
	
	Let $P_i:=\{y \in N(x)\mid v_i\in N(y)\}\cup\{v_i\}-W$ for $i\in[s]$. 
	It follows that $P_i\cap P_j= \emptyset $ for $i \neq j$.
	If there are two distinct non-adjacent  vertices $y_1,y_2\in P_i$, then $(I \backslash \{v_i\})\cup\{y_1,y_2\}$ is an independent set of order $s+1> |I|=s$. 
	Hence, the subgraph induced by $P_i$ is a clique containing $v_i$ for $i\in[s]$.
	
	 As the disjoint union $W\cup P_1 \cdots\cup P_{s}$ is exactly the set $N(x)$, $|W|+\sum_{i = 1}^{s} |P_i|=d(x)$ holds. Hence, $\sum_{i = 1}^{s} |P_i| \geq d(x)-{\lfloor\lambda^2\rfloor \choose 2}(c-1)$.
	Assume that $|P_1|\geq|P_2|\geq\dots\geq |P_s|$, then the subgraph induced by $P_1\cup\{x\}$ is a clique of order at least $ \frac{d(x)-{\lfloor\lambda^2\rfloor \choose 2}(c-1)}{s}+1 \geq \frac{d(x)-{\lfloor\lambda^2\rfloor \choose 2}(c-1)}{\lfloor\lambda^2\rfloor}+1$. 
	
	If every maximal clique containing $x$ has order at most $d(x)-r$,
	then $|P_1|\leq d(x)-r-1$ and the subgraph induced by $P_2\cup\{x\}$ is a clique of order at least $ \frac{d(x)-{\lfloor\lambda^2\rfloor \choose 2}(c-1)-(d(x)-r-1)}{s-1}+1 \geq \frac{r-{\lfloor\lambda^2\rfloor\choose 2}(c-1)+1}{\lfloor\lambda^2\rfloor-1}+1$. 
\end{proof}
	Without using Ramsey theory, the above lemma helps us to find large cliques in $\mu$-bounded graphs which are necessary for Hoffman theory.
	\begin{cor}\label{blhcor}
		Let $c$ and $q$ be positive integers. 
		Let $G$ be a $\mu$-bounded graph with parameter $c$ and $\lambda_{\min}(G)\geq -3$. If every clique $C$ in $G$ has order at most $\min_{x\in V(C)}d(x)-36c+45-8q$, then the associated Hoffman graph $\go(G,q)$ is $2$-fat.
	\end{cor}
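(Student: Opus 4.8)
The plan is to derive the statement directly from the numerical content of Lemma~\ref{blhlem} applied with $\lambda = 3$. By Definition~\ref{asso} the fat vertices of $\go(G,q)$ are in bijection with the maximal cliques of $G$ of order at least $q$, and a slim vertex $x$ is adjacent to the fat vertex $f_i$ exactly when $x \in C_i$. Hence $\go(G,q)$ is $2$-fat if and only if every $x \in V(G)$ lies in two \emph{distinct} maximal cliques, each of order at least $q$. I will produce these as the two cliques $C_1(x)$ and $C_2(x)$ supplied by Lemma~\ref{blhlem}.

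Fix $x$ and put $\lambda = 3$, so that $\lfloor\lambda^2\rfloor = 9$, $\binom{9}{2} = 36$, and $\lfloor\lambda^2\rfloor - 1 = 8$; set $r := 36c + 8q - 45$. Applying the hypothesis of the corollary to a maximal clique $C$ containing $x$ and using $\min_{y\in V(C)} d(y) \le d(x)$, every maximal clique containing $x$ has order at most $d(x) - r$. This is exactly the condition required by the second assertion of Lemma~\ref{blhlem}, so $x$ lies in a maximal clique $C_2(x)$, distinct from the clique $C_1(x)$ of the first assertion, of order at least
\[
\frac{r - 36(c-1) + 1}{8} + 1 = \frac{(36c+8q-45) - 36c + 37}{8} + 1 = \frac{8q-8}{8} + 1 = q .
\]

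It remains to show that $C_1(x)$ also has order at least $q$, and for this I first bound $d(x)$ from below. Since $x \in V(C_2(x))$, the corollary's hypothesis gives $|V(C_2(x))| \le d(x) - 36c + 45 - 8q$, which together with $|V(C_2(x))| \ge q$ yields $d(x) \ge 9q + 36c - 45$. Feeding this into the first bound of Lemma~\ref{blhlem} gives
\[
|V(C_1(x))| \ge \frac{d(x) - 36(c-1)}{9} + 1 \ge \frac{(9q+36c-45) - 36c + 36}{9} + 1 = q .
\]
Thus $x$ lies in the two distinct maximal cliques $C_1(x)$ and $C_2(x)$, both of order at least $q$, so $x$ has at least two fat neighbors in $\go(G,q)$; as $x$ was arbitrary, $\go(G,q)$ is $2$-fat.

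The arithmetic is routine once the order of the two steps is fixed, and the only genuine point is this sequencing: the clique $C_2(x)$ from the ``moreover'' clause carries an order bound that is \emph{independent} of $d(x)$ and already attains $q$, and it is precisely this clique that forces $d(x)$ to be large enough for $C_1(x)$ to reach order $q$ as well. The constants in the hypothesis are engineered so that both bounds evaluate to exactly $q$. The one thing to verify before invoking Lemma~\ref{blhlem} is that $r = 36c + 8q - 45$ is a positive integer, which fails only in the degenerate case $c = q = 1$ (well outside the intended range, where $q$ is large); that case can be excluded or handled separately.
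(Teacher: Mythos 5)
Your proof is correct and follows essentially the same route as the paper's: apply Lemma~\ref{blhlem} with $\lambda=3$, note that the hypothesis (via $\min_{y\in V(C)}d(y)\le d(x)$) gives the ``moreover'' condition with $r=36c+8q-45$ so that $C_2(x)$ has order at least $q$, then use $q\le |V(C_2(x))|\le d(x)-36c+45-8q$ to get $d(x)\ge 9q+36c-45$ and conclude that $C_1(x)$ also has order at least $q$, whence $\go(G,q)$ is $2$-fat by Definition~\ref{asso}. Your closing remark about positivity of $r$ is a fair pedantic point the paper glosses over, but the offending case $c=q=1$ is vacuous anyway since Definition~\ref{asso} requires $q\ge 2$.
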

	\begin{proof}
	 By Lemma \ref{blhlem}, for each vertex $v$, there are two distinct maximal cliques $C_1(v)$ and $C_2(v)$ containing $v$ such that $|V(C_1(v))|\geq\frac{d(v)-\binom{9}{2}(c-1)}{9}+1\geq \frac{d(v)-36c+45}{9}$ and $|V(C_2(v))|\geq\frac{1}{8}(d(v)-\min_{x\in V(C_2(v))}d(x)+36c-45+8q-\binom{9}{2}(c-1)+1)+1\geq \frac{1}{8}(36c-45+8q-\binom{9}{2}(c-1)+1)+1=q$.
	 Note that $d(v)\geq 9q+36c-45$ as $q\leq |V(C_2(v))|\leq \min_{x\in V(C_2(v))}d(x)-36c+45-8q$. It follows that $|V(C_1(v))|\geq \frac{d(v)-36c+45}{9}\geq q$.
	 In other words, each vertex is contained in two maximal cliques with order at least $q$, and thus $\go(G,q)$ is $2$-fat by Definition \ref{asso}.
	\end{proof}
	
	Now, we are in the position to prove Theorem \ref{intro2}.
	
	\begin{proof}[\bf Proof of Theorem \ref{intro2}]
		Let $q:= \max\{c+5, 50\tilde{c}+16\}$.
		By the condition (ii), every clique $C$ in $G$ has order at most $\min_{x\in V(C)}d(x)-36c+45-8q$.
		Thus, the associated Hoffman graph $\go(G,q)$ is $2$-fat  by  Corollary \ref{blhcor}.
		By Theorem \ref{forbiddenthm}, $\go(G,q)$ does not contain an induced Hoffman subgraph whose special matrix is a member of $\mathcal{M}(2)$. So, the associated Hoffman graph $\go(G,q)$ is
		a $2$-fat $\Go(2)$-line Hoffman graph by Theorem \ref{line graphhjk}. 

Notice that a $\Go(2)$-line Hoffman graph is a \big\{\raisebox{-1ex}{\begin{tikzpicture}[scale=0.3]
			
			\tikzstyle{every node}=[draw,circle,fill=black,minimum size=10pt,scale=0.3,
			inner sep=0pt]
			
			\draw (-2.1,0) node (1f1) [label=below:$$] {};
			\draw (-1.6,0) node (1f2) [label=below:$$] {};
			\draw (-1.1,0) node (1f3) [label=below:$$] {};

			\tikzstyle{every node}=[draw,circle,fill=black,minimum size=5pt,scale=0.3,
			inner sep=0pt]

			\draw (-1.6,1) node (1s1) [label=below:$$] {};
			
			\draw (1f1) -- (1s1) -- (1f2);
			\draw (1f3) -- (1s1);
	\end{tikzpicture}},\hspace{-0.08cm}
	\raisebox{-1ex}{\begin{tikzpicture}[scale=0.3]
			\tikzstyle{every node}=[draw,circle,fill=black,minimum size=10pt,scale=0.3,
			inner sep=0pt]

			\draw (-0.5,0) node (2f1) [label=below:$$] {};
			\draw (0.5,0) node (2f2) [label=below:$$] {};
			\draw (-0.5,1) node (2f3) [label=below:$$] {};
			\draw (0.5,1) node (2f4) [label=below:$$] {};

			\tikzstyle{every node}=[draw,circle,fill=black,minimum size=5pt,scale=0.3,
			inner sep=0pt]

			\draw (0,0.2) node (2s1) [label=below:$$] {};
			\draw (0.3,0.5) node (2s2) [label=below:$$] {};
			\draw (-0.3,0.5) node (2s3) [label=below:$$] {};
			\draw (0,0.8) node (2s4) [label=below:$$] {};

			\draw (2f1) -- (2s1) -- (2f2) -- (2s2) -- (2f4) -- (2s4) -- (2f3) -- (2s3) -- (2f1);
			\draw (2s1) -- (2s4);
			\draw (2s2) -- (2s3);
	\end{tikzpicture}},\hspace{-0.08cm}
	\raisebox{-1ex}{\begin{tikzpicture}[scale=0.3]
			\tikzstyle{every node}=[draw,circle,fill=black,minimum size=10pt,scale=0.3,
			inner sep=0pt]

			\draw (1.5,0) node (3f1) [label=below:$$] {};
			\draw (1.5,1) node (3f2) [label=below:$$] {};

			\tikzstyle{every node}=[draw,circle,fill=black,minimum size=5pt,scale=0.3,
			inner sep=0pt]

			\draw (1,0.5) node (3s1) [label=below:$$] {};
			\draw (2,0.5) node (3s2) [label=below:$$] {};
			
			\draw (3s1) -- (3s2);
			\draw (3f1) -- (3s1) -- (3f2) -- (3s2) -- (3f1);
	\end{tikzpicture}}\big\}-line Hoffman graph by 
Lemma \ref{b2linehoff}. This means that, by Definition \ref{line}, $\go(G,q)$ is a $2$-fat induced Hoffman subgraph of a Hoffman graph $\ho=\uplus_{i=1}^{v}\ho^i$, where $\ho^i$ is isomorphic to a $2$-fat induced Hoffman subgraph of a Hoffman graph in \big\{\raisebox{-1ex}{\begin{tikzpicture}[scale=0.3]
			
			\tikzstyle{every node}=[draw,circle,fill=black,minimum size=10pt,scale=0.3,
			inner sep=0pt]
			
			\draw (-2.1,0) node (1f1) [label=below:$$] {};
			\draw (-1.6,0) node (1f2) [label=below:$$] {};
			\draw (-1.1,0) node (1f3) [label=below:$$] {};

			\tikzstyle{every node}=[draw,circle,fill=black,minimum size=5pt,scale=0.3,
			inner sep=0pt]

			\draw (-1.6,1) node (1s1) [label=below:$$] {};
			
			\draw (1f1) -- (1s1) -- (1f2);
			\draw (1f3) -- (1s1);
	\end{tikzpicture}},\hspace{-0.08cm}
	\raisebox{-1ex}{\begin{tikzpicture}[scale=0.3]
			\tikzstyle{every node}=[draw,circle,fill=black,minimum size=10pt,scale=0.3,
			inner sep=0pt]

			\draw (-0.5,0) node (2f1) [label=below:$$] {};
			\draw (0.5,0) node (2f2) [label=below:$$] {};
			\draw (-0.5,1) node (2f3) [label=below:$$] {};
			\draw (0.5,1) node (2f4) [label=below:$$] {};

			\tikzstyle{every node}=[draw,circle,fill=black,minimum size=5pt,scale=0.3,
			inner sep=0pt]

			\draw (0,0.2) node (2s1) [label=below:$$] {};
			\draw (0.3,0.5) node (2s2) [label=below:$$] {};
			\draw (-0.3,0.5) node (2s3) [label=below:$$] {};
			\draw (0,0.8) node (2s4) [label=below:$$] {};

			\draw (2f1) -- (2s1) -- (2f2) -- (2s2) -- (2f4) -- (2s4) -- (2f3) -- (2s3) -- (2f1);
			\draw (2s1) -- (2s4);
			\draw (2s2) -- (2s3);
	\end{tikzpicture}},\hspace{-0.08cm}
	\raisebox{-1ex}{\begin{tikzpicture}[scale=0.3]
			\tikzstyle{every node}=[draw,circle,fill=black,minimum size=10pt,scale=0.3,
			inner sep=0pt]

			\draw (1.5,0) node (3f1) [label=below:$$] {};
			\draw (1.5,1) node (3f2) [label=below:$$] {};

			\tikzstyle{every node}=[draw,circle,fill=black,minimum size=5pt,scale=0.3,
			inner sep=0pt]

			\draw (1,0.5) node (3s1) [label=below:$$] {};
			\draw (2,0.5) node (3s2) [label=below:$$] {};
			
			\draw (3s1) -- (3s2);
			\draw (3f1) -- (3s1) -- (3f2) -- (3s2) -- (3f1);
	\end{tikzpicture}}\big\}
	 for $i\in[v]$.
	 
	 Assume that $\ho^i$ is isomorphic to a $2$-fat induced Hoffman subgraph of 	\raisebox{-1ex}{\begin{tikzpicture}[scale=0.3]
	 		\tikzstyle{every node}=[draw,circle,fill=black,minimum size=10pt,scale=0.3,
	 		inner sep=0pt]

	 		\draw (-0.5,0) node (2f1) [label=below:$$] {};
	 		\draw (0.5,0) node (2f2) [label=below:$$] {};
	 		\draw (-0.5,1) node (2f3) [label=below:$$] {};
	 		\draw (0.5,1) node (2f4) [label=below:$$] {};

	 		\tikzstyle{every node}=[draw,circle,fill=black,minimum size=5pt,scale=0.3,
	 		inner sep=0pt]

	 		\draw (0,0.2) node (2s1) [label=below:$$] {};
	 		\draw (0.3,0.5) node (2s2) [label=below:$$] {};
	 		\draw (-0.3,0.5) node (2s3) [label=below:$$] {};
	 		\draw (0,0.8) node (2s4) [label=below:$$] {};

	 		\draw (2f1) -- (2s1) -- (2f2) -- (2s2) -- (2f4) -- (2s4) -- (2f3) -- (2s3) -- (2f1);
	 		\draw (2s1) -- (2s4);
	 		\draw (2s2) -- (2s3);
	 \end{tikzpicture}}.
{It follows that $\ho^i$ belongs to the set $\mathcal{S}:=$}
 \big\{\raisebox{-1ex}{\begin{tikzpicture}[scale=0.3]
 		
 		\tikzstyle{every node}=[draw,circle,fill=black,minimum size=10pt,scale=0.3,
 		inner sep=0pt]
 		
 		\draw (-2.1,-0.4) node (1f1) [label=below:$$] {};
 		\draw (-1.1,-0.4) node (1f3) [label=below:$$] {};

 		\tikzstyle{every node}=[draw,circle,fill=black,minimum size=5pt,scale=0.3,
 		inner sep=0pt]

 		\draw (-1.6,0.6) node (1s1) [label=below:$$] {};
 		
 		\draw (1f1) -- (1s1) -- (1f3);
 \end{tikzpicture}},
	\raisebox{-1ex}{\begin{tikzpicture}[scale=0.3]
		\tikzstyle{every node}=[draw,circle,fill=black,minimum size=10pt,scale=0.3,
		inner sep=0pt]

		\draw (-0.5,0) node (2f1) [label=below:$$] {};
		\draw (0.5,0) node (2f2) [label=below:$$] {};
		\draw (-0.5,1) node (2f3) [label=below:$$] {};
		\draw (0.5,1) node (2f4) [label=below:$$] {};

		\tikzstyle{every node}=[draw,circle,fill=black,minimum size=5pt,scale=0.3,
		inner sep=0pt]

		\draw (0.3,0.5) node (2s2) [label=below:$$] {};
		\draw (-0.3,0.5) node (2s3) [label=below:$$] {};

		\draw (2f1) -- (2s3) -- (2f3);
		\draw  (2f4) -- (2s2) -- (2f2);
		\draw (2s2) -- (2s3);
\end{tikzpicture}},
\raisebox{-1ex}{\begin{tikzpicture}[scale=0.3]
		\tikzstyle{every node}=[draw,circle,fill=black,minimum size=10pt,scale=0.3,
		inner sep=0pt]

		\draw (-0.5,0) node (2f1) [label=below:$$] {};
		
		\draw (-0.5,1) node (2f3) [label=below:$$] {};
		\draw (0.5,1) node (2f4) [label=below:$$] {};

		\tikzstyle{every node}=[draw,circle,fill=black,minimum size=5pt,scale=0.3,
		inner sep=0pt]

		\draw (-0.3,0.5) node (2s3) [label=below:$$] {};
		\draw (0,0.8) node (2s4) [label=below:$$] {};

		\draw (2f1) -- (2s3) -- (2f3) -- (2s4) -- (2f4);
\end{tikzpicture}},
\raisebox{-1ex}{\begin{tikzpicture}[scale=0.3]
		\tikzstyle{every node}=[draw,circle,fill=black,minimum size=10pt,scale=0.3,
		inner sep=0pt]

		\draw (-0.5,0) node (2f1) [label=below:$$] {};
		\draw (0.5,0) node (2f2) [label=below:$$] {};
		\draw (-0.5,1) node (2f3) [label=below:$$] {};
		\draw (0.5,1) node (2f4) [label=below:$$] {};

		\tikzstyle{every node}=[draw,circle,fill=black,minimum size=5pt,scale=0.3,
		inner sep=0pt]

		\draw (0.3,0.5) node (2s2) [label=below:$$] {};
		\draw (-0.3,0.5) node (2s3) [label=below:$$] {};
		\draw (0,0.8) node (2s4) [label=below:$$] {};

		\draw  (2f2) -- (2s2) -- (2f4) -- (2s4) -- (2f3) -- (2s3) -- (2f1);
		\draw (2s2) -- (2s3);
\end{tikzpicture}},
\raisebox{-1ex}{\begin{tikzpicture}[scale=0.3]
		\tikzstyle{every node}=[draw,circle,fill=black,minimum size=10pt,scale=0.3,
		inner sep=0pt]

		\draw (-0.5,0) node (2f1) [label=below:$$] {};
		\draw (0.5,0) node (2f2) [label=below:$$] {};
		\draw (-0.5,1) node (2f3) [label=below:$$] {};
		\draw (0.5,1) node (2f4) [label=below:$$] {};

		\tikzstyle{every node}=[draw,circle,fill=black,minimum size=5pt,scale=0.3,
		inner sep=0pt]

		\draw (0,0.2) node (2s1) [label=below:$$] {};
		\draw (0.3,0.5) node (2s2) [label=below:$$] {};
		\draw (-0.3,0.5) node (2s3) [label=below:$$] {};
		\draw (0,0.8) node (2s4) [label=below:$$] {};

		\draw (2f1) -- (2s1) -- (2f2) -- (2s2) -- (2f4) -- (2s4) -- (2f3) -- (2s3) -- (2f1);
		\draw (2s1) -- (2s4);
		\draw (2s2) -- (2s3);
\end{tikzpicture}}\hspace{-0.08cm}
\big\}.
{In the following, we will show that $\ho^i$ is isomorphic to either}
	\raisebox{-1ex}{\begin{tikzpicture}[scale=0.3]
			
			\tikzstyle{every node}=[draw,circle,fill=black,minimum size=10pt,scale=0.3,
			inner sep=0pt]
			
			\draw (-2.1,-0.4) node (1f1) [label=below:$$] {};
			\draw (-1.1,-0.4) node (1f3) [label=below:$$] {};

			\tikzstyle{every node}=[draw,circle,fill=black,minimum size=5pt,scale=0.3,
			inner sep=0pt]

			\draw (-1.6,0.6) node (1s1) [label=below:$$] {};
			
			\draw (1f1) -- (1s1) -- (1f3);
	\end{tikzpicture}}  or
	\raisebox{-1ex}{\begin{tikzpicture}[scale=0.3]
			\tikzstyle{every node}=[draw,circle,fill=black,minimum size=10pt,scale=0.3,
			inner sep=0pt]

			\draw (-0.5,0) node (2f1) [label=below:$$] {};
			\draw (0.5,0) node (2f2) [label=below:$$] {};
			\draw (-0.5,1) node (2f3) [label=below:$$] {};
			\draw (0.5,1) node (2f4) [label=below:$$] {};

			\tikzstyle{every node}=[draw,circle,fill=black,minimum size=5pt,scale=0.3,
			inner sep=0pt]

			\draw (0.3,0.5) node (2s2) [label=below:$$] {};
			\draw (-0.3,0.5) node (2s3) [label=below:$$] {};

			\draw (2f1) -- (2s3) -- (2f3);
			\draw  (2f4) -- (2s2) -- (2f2);
			\draw (2s2) -- (2s3);
	\end{tikzpicture}}.
{It suffices to prove that each pair of slim vertices in $\ho^i$ is adjacent.}

			
			


			
 
 { If there are two distinct non-adjacent slim vertices $s_1,s_2$ in $\ho^i$, then they have a common fat neighbor $f$ in $\ho_i$, as $\ho_i\in\mathcal{S}$.
For $j=1,2$, since $s_j$ 
 has exactly two fat neighbors in $\ho^i$, we may let $f_j$ 
 denote the other one.
By Lemma \ref{combi} (iii), the set $N_{\ho}^{{\rm fat}}(s_j)$ of fat neighbors of $s_j$ in $\ho$ is contained in $N_{\ho^i}^{{\rm fat}}(s_j)$ for $j=1,2$, thus $ N_{\ho}^{{\rm fat}}(s_j)= N_{\ho^i}^{{\rm fat}}(s_j)$.
By Definition \ref{line}, $\go(G,q)$ is a $2$-fat induced Hoffman subgraph of $\ho$. It follows that 
$2\leq|N_{\go(G,q)}^{{\rm fat}}(s_j)|\leq |N_{\ho}^{{\rm fat}}(s_j)| = |N_{\ho^i}^{{\rm fat}}(s_j)|=2$, and 
thus $N_{\go(G,q)}^{{\rm fat}}(s_j)=N_{\ho^i}^{{\rm fat}}(s_j)=\{f,f_j\}$ for $j=1,2$.
Therefore the two non-adjacent slim vertices $s_1$ and $s_2$ have a common fat neighbor $f$ in $\go(G,q)$.
However, this contradicts Definition \ref{asso} (iii), which states that the set of slim neighbors of a fat vertex in  $\go(G,q)$ forms a clique in $G$.
Thus, $\ho^i$ is isomorphic to either}
\raisebox{-1ex}{\begin{tikzpicture}[scale=0.3]
	
	\tikzstyle{every node}=[draw,circle,fill=black,minimum size=10pt,scale=0.3,
	inner sep=0pt]
	
	\draw (-2.1,-0.4) node (1f1) [label=below:$$] {};
	\draw (-1.1,-0.4) node (1f3) [label=below:$$] {};

	\tikzstyle{every node}=[draw,circle,fill=black,minimum size=5pt,scale=0.3,
	inner sep=0pt]

	\draw (-1.6,0.6) node (1s1) [label=below:$$] {};
	
	\draw (1f1) -- (1s1) -- (1f3);
	\end{tikzpicture}}  or
	\raisebox{-1ex}{\begin{tikzpicture}[scale=0.3]
	\tikzstyle{every node}=[draw,circle,fill=black,minimum size=10pt,scale=0.3,
	inner sep=0pt]

	\draw (-0.5,0) node (2f1) [label=below:$$] {};
	\draw (0.5,0) node (2f2) [label=below:$$] {};
	\draw (-0.5,1) node (2f3) [label=below:$$] {};
	\draw (0.5,1) node (2f4) [label=below:$$] {};

	\tikzstyle{every node}=[draw,circle,fill=black,minimum size=5pt,scale=0.3,
	inner sep=0pt]

	\draw (0.3,0.5) node (2s2) [label=below:$$] {};
	\draw (-0.3,0.5) node (2s3) [label=below:$$] {};

	\draw (2f1) -- (2s3) -- (2f3);
	\draw  (2f4) -- (2s2) -- (2f2);
	\draw (2s2) -- (2s3);
	\end{tikzpicture}}.

Note that 	\raisebox{-1ex}{\begin{tikzpicture}[scale=0.3]
		\tikzstyle{every node}=[draw,circle,fill=black,minimum size=10pt,scale=0.3,
		inner sep=0pt]

		\draw (-0.5,0) node (2f1) [label=below:$$] {};
		\draw (0.5,0) node (2f2) [label=below:$$] {};
		\draw (-0.5,1) node (2f3) [label=below:$$] {};
		\draw (0.5,1) node (2f4) [label=below:$$] {};

		\tikzstyle{every node}=[draw,circle,fill=black,minimum size=5pt,scale=0.3,
		inner sep=0pt]

		\draw (0.3,0.5) node (2s2) [label=below:$$] {};
		\draw (-0.3,0.5) node (2s3) [label=below:$$] {};

		\draw (2f1) -- (2s3) -- (2f3);
		\draw  (2f4) -- (2s2) -- (2f2);
		\draw (2s2) -- (2s3);
\end{tikzpicture}} { is an induced Hoffman subgraph of}
\raisebox{-1ex}{\begin{tikzpicture}[scale=0.3]
		
		\tikzstyle{every node}=[draw,circle,fill=black,minimum size=10pt,scale=0.3,
		inner sep=0pt]
		
		\draw (-1.8,0) node (1f1) [label=below:$$] {};
		\draw (-1.2,0) node (1f2) [label=below:$$] {};
		\draw (-0.6,0) node (1f3) [label=below:$$] {};
		\draw (0,0) node (1f4) [label=below:$$] {};
		\draw (0.6,0) node (1f5) [label=below:$$] {};
		
		\tikzstyle{every node}=[draw,circle,fill=black,minimum size=5pt,scale=0.3,
		inner sep=0pt]

		\draw (-1.2,1) node (1s1) [label=below:$$] {};
		\draw (0,1) node (1s2) [label=below:$$] {};
		
		\draw (1f1) -- (1s1) -- (1f2) -- (1s1) -- (1f3) -- (1s2) --(1f4) -- (1s2) -- (1f5);
		\draw (1s1) -- (1s2);
\end{tikzpicture}} $=$
\raisebox{-1ex}{\begin{tikzpicture}[scale=0.3]
		
		\tikzstyle{every node}=[draw,circle,fill=black,minimum size=10pt,scale=0.3,
		inner sep=0pt]
		
		\draw (-2.1,0) node (1f1) [label=below:$$] {};
		\draw (-1.6,0) node (1f2) [label=below:$$] {};
		\draw (-1.1,0) node (1f3) [label=below:$$] {};

		\tikzstyle{every node}=[draw,circle,fill=black,minimum size=5pt,scale=0.3,
		inner sep=0pt]

		\draw (-1.6,1) node (1s1) [label=below:$$] {};
		
		\draw (1f1) -- (1s1) -- (1f2);
		\draw (1f3) -- (1s1);
\end{tikzpicture}} $\uplus$
\raisebox{-1ex}{\begin{tikzpicture}[scale=0.3]

\tikzstyle{every node}=[draw,circle,fill=black,minimum size=10pt,scale=0.3,
inner sep=0pt]

\draw (-2.1,0) node (1f1) [label=below:$$] {};
\draw (-1.6,0) node (1f2) [label=below:$$] {};
\draw (-1.1,0) node (1f3) [label=below:$$] {};

\tikzstyle{every node}=[draw,circle,fill=black,minimum size=5pt,scale=0.3,
inner sep=0pt]

\draw (-1.6,1) node (1s1) [label=below:$$] {};

\draw (1f1) -- (1s1) -- (1f2);
\draw (1f3) -- (1s1);
\end{tikzpicture}}.
It follows that both \raisebox{-1ex}{\begin{tikzpicture}[scale=0.3]
		
		\tikzstyle{every node}=[draw,circle,fill=black,minimum size=10pt,scale=0.3,
		inner sep=0pt]
		
		\draw (-2.1,-0.4) node (1f1) [label=below:$$] {};
		\draw (-1.1,-0.4) node (1f3) [label=below:$$] {};

		\tikzstyle{every node}=[draw,circle,fill=black,minimum size=5pt,scale=0.3,
		inner sep=0pt]

		\draw (-1.6,0.6) node (1s1) [label=below:$$] {};
		
		\draw (1f1) -- (1s1) -- (1f3);
\end{tikzpicture}}  and
\raisebox{-1ex}{\begin{tikzpicture}[scale=0.3]
		\tikzstyle{every node}=[draw,circle,fill=black,minimum size=10pt,scale=0.3,
		inner sep=0pt]

		\draw (-0.5,0) node (2f1) [label=below:$$] {};
		\draw (0.5,0) node (2f2) [label=below:$$] {};
		\draw (-0.5,1) node (2f3) [label=below:$$] {};
		\draw (0.5,1) node (2f4) [label=below:$$] {};

		\tikzstyle{every node}=[draw,circle,fill=black,minimum size=5pt,scale=0.3,
		inner sep=0pt]

		\draw (0.3,0.5) node (2s2) [label=below:$$] {};
		\draw (-0.3,0.5) node (2s3) [label=below:$$] {};

		\draw (2f1) -- (2s3) -- (2f3);
		\draw  (2f4) -- (2s2) -- (2f2);
		\draw (2s2) -- (2s3);
\end{tikzpicture}} are $2$-fat \raisebox{-1ex}{\begin{tikzpicture}[scale=0.3]
	
	\tikzstyle{every node}=[draw,circle,fill=black,minimum size=10pt,scale=0.3,
	inner sep=0pt]
	
	\draw (-2.1,0) node (1f1) [label=below:$$] {};
	\draw (-1.6,0) node (1f2) [label=below:$$] {};
	\draw (-1.1,0) node (1f3) [label=below:$$] {};

	\tikzstyle{every node}=[draw,circle,fill=black,minimum size=5pt,scale=0.3,
	inner sep=0pt]

	\draw (-1.6,1) node (1s1) [label=below:$$] {};
	
	\draw (1f1) -- (1s1) -- (1f2);
	\draw (1f3) -- (1s1);
	\end{tikzpicture}}-line Hoffman graphs. 	 This shows that $\go(G,q)$ is a $2$-fat \big\{\raisebox{-1ex}{\begin{tikzpicture}[scale=0.3]
		
		\tikzstyle{every node}=[draw,circle,fill=black,minimum size=10pt,scale=0.3,
		inner sep=0pt]
		
		\draw (-2.1,0) node (1f1) [label=below:$$] {};
		\draw (-1.6,0) node (1f2) [label=below:$$] {};
		\draw (-1.1,0) node (1f3) [label=below:$$] {};

		\tikzstyle{every node}=[draw,circle,fill=black,minimum size=5pt,scale=0.3,
		inner sep=0pt]

		\draw (-1.6,1) node (1s1) [label=below:$$] {};
		
		\draw (1f1) -- (1s1) -- (1f2);
		\draw (1f3) -- (1s1);
		\end{tikzpicture}},\hspace{-0.08cm}
		\raisebox{-1ex}{\begin{tikzpicture}[scale=0.3]
		\tikzstyle{every node}=[draw,circle,fill=black,minimum size=10pt,scale=0.3,
		inner sep=0pt]

		\draw (1.5,0) node (3f1) [label=below:$$] {};
		\draw (1.5,1) node (3f2) [label=below:$$] {};

		\tikzstyle{every node}=[draw,circle,fill=black,minimum size=5pt,scale=0.3,
		inner sep=0pt]

		\draw (1,0.5) node (3s1) [label=below:$$] {};
		\draw (2,0.5) node (3s2) [label=below:$$] {};
		
		\draw (3s1) -- (3s2);
		\draw (3f1) -- (3s1) -- (3f2) -- (3s2) -- (3f1);
		\end{tikzpicture}}\big\}-line Hoffman graphs.
					\end{proof}


	As we mentioned in Section \ref{Forbiddenmatrice}, the smallest eigenvalue of each matrix in $\mathcal{M}(2)$ is at most $-2-\sqrt{2}$. We give the following result without a proof, as it can be proved by Proposition \ref{epsilon}, Proposition \ref{assohoff}, and a similar discussion of the proof for Theorem \ref{intro2}.
	\begin{thm}\label{corep}
			Let $c$ be a positive integer. For $\epsilon>0$, there exists a positive integer $K:=K(\epsilon,c)$
			 such that if a graph $G$ satisfies the following conditions:
		\begin{enumerate}
			\item $G$ is a $\mu$-bounded graph with parameter $c$,
			\item  every clique $C$ has order at most $\min_{x\in V(C)}d(x)-K$,
			\item $\lambda_{\min} (G) \geq -2-\sqrt{2}+\epsilon$,
		\end{enumerate}
		then the associated Hoffman graph $\go(G,q)$ is a $2$-fat \big\{\raisebox{-1ex}{\begin{tikzpicture}[scale=0.3]
			
			\tikzstyle{every node}=[draw,circle,fill=black,minimum size=10pt,scale=0.3,
			inner sep=0pt]
			
			\draw (-2.1,0) node (1f1) [label=below:$$] {};
			\draw (-1.6,0) node (1f2) [label=below:$$] {};
			\draw (-1.1,0) node (1f3) [label=below:$$] {};

			\tikzstyle{every node}=[draw,circle,fill=black,minimum size=5pt,scale=0.3,
			inner sep=0pt]

			\draw (-1.6,1) node (1s1) [label=below:$$] {};
			
			\draw (1f1) -- (1s1) -- (1f2);
			\draw (1f3) -- (1s1);
	\end{tikzpicture}},\hspace{-0.08cm}
	\raisebox{-1ex}{\begin{tikzpicture}[scale=0.3]
			\tikzstyle{every node}=[draw,circle,fill=black,minimum size=10pt,scale=0.3,
			inner sep=0pt]

			\draw (1.5,0) node (3f1) [label=below:$$] {};
			\draw (1.5,1) node (3f2) [label=below:$$] {};

			\tikzstyle{every node}=[draw,circle,fill=black,minimum size=5pt,scale=0.3,
			inner sep=0pt]

			\draw (1,0.5) node (3s1) [label=below:$$] {};
			\draw (2,0.5) node (3s2) [label=below:$$] {};
			
			\draw (3s1) -- (3s2);
			\draw (3f1) -- (3s1) -- (3f2) -- (3s2) -- (3f1);
	\end{tikzpicture}}\big\}
	-line Hoffman graph and $\lambda_{\min}(G)\geq -3$.
	\end{thm}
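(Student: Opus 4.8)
The plan is to run the proof of Theorem~\ref{intro2} essentially verbatim, making only two substitutions: replace the explicit eigenvalue computations of Proposition~\ref{cal} by the quantitative statement of Proposition~\ref{epsilon}, and replace the constant $3$ by a suitable integer $\lambda$ with $-\lambda\le -2-\sqrt{2}+\epsilon$. Since $2+\sqrt{2}<4$, I would simply fix the integer $\lambda:=4$ (any integer $\lambda\ge 2+\sqrt{2}-\epsilon$ works), so that hypothesis (iii) gives $\lambda_{\min}(G)\ge -\lambda$, and set $\tilde{c}:=\min\{c,\lambda(\lambda-1)\}$ as in Proposition~\ref{assohoff}. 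The value of $K=K(\epsilon,c)$ and the auxiliary threshold $q=q(\epsilon,c)$ are then produced by working backwards through the same chain of lemmas.

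First I would fix the threshold $p^{\ast}=p^{\ast}(\epsilon)$. Each of the nine Hoffman graphs $\ho\in\Ho$ has special matrix in $\mathcal{M}(2)$ and hence smallest eigenvalue at most $-2-\sqrt{2}$; by Proposition~\ref{epsilon} (together with $\lim_{p\to\infty}\lambda_{\min}(G(\ho,p))=\lambda_{\min}(\ho)\le -2-\sqrt{2}$ from Theorem~\ref{Ostrowski}) one may choose $p^{\ast}$ large enough that $\lambda_{\min}(G(\ho,p^{\ast}))<-2-\sqrt{2}+\epsilon$ for \emph{every} $\ho\in\Ho$. By Cauchy interlacing, hypothesis (iii) then forces that $G$ contains no $G(\ho,p^{\ast})$ with $\ho\in\Ho$ as an induced subgraph (otherwise $\lambda_{\min}(G)\le\lambda_{\min}(G(\ho,p^{\ast}))<-2-\sqrt{2}+\epsilon$, a contradiction). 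Next I would take $q$ exceeding all the finitely many thresholds $n_{1}(\lambda)$, $(\lambda-1)^{2}+c+1$, and $n_{2}(\phi,\sigma,p^{\ast},\tilde{c})$ demanded by Proposition~\ref{assohoff} for the (bounded) parameters $\phi,\sigma$ occurring among the members of $\Ho$; the contrapositive of Proposition~\ref{assohoff} then upgrades the previous line to: $\go(G,q)$ contains no induced Hoffman subgraph isomorphic to a member of $\Ho$. By Lemma~\ref{m2} this is precisely the conclusion of Theorem~\ref{forbiddenthm}, namely that $\go(G,q)$ has no induced Hoffman subgraph with special matrix in $\mathcal{M}(2)$.

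With this forbidden-subgraph input, the rest is identical to the proof of Theorem~\ref{intro2}. I would choose $K=K(\epsilon,c)$ via Lemma~\ref{blhlem} (now using $\lfloor\lambda^{2}\rfloor=16$ in place of $9$) so that hypothesis (ii) guarantees every vertex lies in two distinct maximal cliques of order at least $q$; hence $\go(G,q)$ is $2$-fat, as in Corollary~\ref{blhcor}. Theorem~\ref{line graphhjk} then shows $\go(G,q)$ is a $2$-fat $\Go(2)$-line Hoffman graph, and Lemma~\ref{b2linehoff} realises it as a line Hoffman graph over the three-member family of that lemma. The final refinement, verbatim from Theorem~\ref{intro2} and relying on the fact that the slim neighbours of any fat vertex of $\go(G,q)$ form a clique (Definition~\ref{asso} (iii)), rules out the indecomposable summand carrying two non-adjacent slim vertices, leaving exactly the $2$-fat line Hoffman graph over the two-member family in the statement. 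Finally, Proposition~\ref{muassle} yields $A(G)+3I=N^{T}N$, whence $\lambda_{\min}(G)\ge -3$; note this is strictly stronger than hypothesis (iii) for small $\epsilon$, so the forced structure itself bootstraps the sharper eigenvalue bound.

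The main obstacle, relative to Theorem~\ref{intro2}, is that Proposition~\ref{epsilon} only asserts the \emph{existence} of $p^{\ast}(\epsilon)$ and gives no control on its growth as $\epsilon\to 0$. Consequently $q$, and hence $K$, blows up as $\epsilon\to 0$, so one obtains only an existence statement for $K(\epsilon,c)$ rather than the explicit reasonable bound available in Theorem~\ref{intro2} — which is exactly why the theorem is stated with an unspecified $K$. The only other point to check is the harmless bookkeeping that enlarging $\lambda$ from $3$ to $4$ increases $\tilde{c}$ and the thresholds $n_{1}(\lambda),n_{2}(\cdots)$; since these enter solely through the freely chosen $q$, no genuine difficulty arises.
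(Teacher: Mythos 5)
Your proposal is correct and follows exactly the route the paper itself prescribes: the paper omits a written proof of Theorem~\ref{corep}, stating only that it follows from Proposition~\ref{epsilon}, Proposition~\ref{assohoff}, and a discussion parallel to the proof of Theorem~\ref{intro2}, which is precisely what you carry out (choosing an integer $\lambda=4\geq 2+\sqrt{2}$, replacing Proposition~\ref{cal} by the quantitative Proposition~\ref{epsilon}, redoing Corollary~\ref{blhcor} with $\lfloor\lambda^2\rfloor=16$, and recovering $\lambda_{\min}(G)\geq-3$ from Proposition~\ref{muassle}). Your closing remark also correctly identifies why $K(\epsilon,c)$ is only existential here, namely the uncontrolled growth of $p^{\ast}(\epsilon)$ from Proposition~\ref{epsilon} as $\epsilon\to 0$.
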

	
	\begin{re}
		The parameter $K=K(\epsilon,c)$ can be bounded by $\frac{500}{\epsilon}+55c$ when $\epsilon$ tends to zero.
	\end{re}
	
	The following proposition generalizes Proposition \ref{muassle2}.
	
	\begin{pro}	Let $\lambda\geq2$ and $c$ be positive integers and $\tilde{c}:=\min\{c,\lambda(\lambda-1)\}$. 
		Let $n_1(\lambda):=\lambda^4-2\lambda^3+3\lambda^2-3\lambda+3$ and $n_2(\phi, \sigma,  p,\tilde{c}):=\tilde{c}(\sigma-1)+\tilde{c}(p+1)(\phi-1)+p+1$.
    Let $q\geq\max\{n_1(\lambda),n_2(\lambda+1,1,\lambda(\lambda-1)+1,\tilde{c}),{n_2(\lambda,2,(\lambda+1)(\lambda-1)^2+1,\tilde{c}),(\lambda-1)^2+c+1\}}$. 
		Let $R(c,\lambda,q):=\lambda^2(q-1)+(c-1)\binom{\lambda^2}{2}$.
		If $G$ is a $\mu$-bounded graph with parameter $c$ and $\lambda_{\min}(G)\geq-\lambda$, then there exists a set of maximal cliques $\Po=\{P_1,\ldots,P_s\}$ in $G$ satisfying the following properties:
		\begin{enumerate}
			\item $|N(x)-\bigcup_{i=1}^s\{y\mid x,y\in V(P_i)\}|< R(c,\lambda,q)$ for $x\in V(G)$,
			\item  $|\{i\mid x\in V(P_i)\}|\leq\lambda$ for $x\in V(G)$,{ and if the equality holds for $x$, then $N(x)=\bigcup_{i=1}^s\{y\mid x,y\in V(P_i)\}-\{x\}$,}
			\item  $|V(P_i)\cap V(P_j)|\leq \lambda-1$ for $i\neq j$,
			\item $|V(P_i)|\geq q$ for $i\in[s]$.
		\end{enumerate}
	\end{pro}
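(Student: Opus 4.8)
The plan is to take $\Po$ to be the set of \emph{all} maximal cliques of $G$ of order at least $q$, so that property (iv) holds by construction, and then to verify the remaining three properties by combining the embedding result Proposition \ref{assohoff}, the eigenvalue estimates of Proposition \ref{215}, and the Bose--Laskar decomposition inside the proof of Lemma \ref{blhlem}. Throughout I would identify the cliques in $\Po$ with the fat vertices of $\go(G,q)$: a vertex $x$ lies in $P_i\in\Po$ precisely when $x$ is a slim neighbour of the corresponding fat vertex, so the quantity $|\{i : x\in V(P_i)\}|$ in (ii) is exactly the number of fat neighbours of $x$ in $\go(G,q)$. The common mechanism for (ii) and (iii) is: exhibit a forbidden induced Hoffman subgraph $\ho$ in $\go(G,q)$, use Proposition \ref{assohoff} to embed $G(\ho,p)$ as an induced subgraph of $G$, and then invoke Proposition \ref{215} together with Cauchy interlacing to force $\lambda_{\min}(G)<-\lambda$, contradicting the hypothesis.

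For the upper bound in (ii), if some slim vertex $x$ had $\lambda+1$ fat neighbours, then $\go(G,q)$ would contain $\ho^{(\lambda+1)}$ as an induced Hoffman subgraph. Since $q\geq\max\{n_1(\lambda),n_2(\lambda+1,1,\lambda(\lambda-1)+1,\tilde{c}),(\lambda-1)^2+c+1\}$, Proposition \ref{assohoff} (with $\phi=\lambda+1$, $\sigma=1$, $p=\lambda(\lambda-1)+1$) embeds $G(\ho^{(\lambda+1)},\lambda(\lambda-1)+1)$ in $G$, while Proposition \ref{215}(i) gives $\lambda_{\min}(G(\ho^{(\lambda+1)},\lambda(\lambda-1)+1))<-\lambda$, a contradiction. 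For the equality clause, suppose $x$ lies in exactly $\lambda$ cliques of $\Po$ but has a neighbour $y$ contained in no common clique of $\Po$ with $x$. Then $y$ shares no fat neighbour with $x$ (any such fat neighbour would be one of the $\lambda$ fat vertices at $x$, putting $y$ in a common clique), so the slim pair $\{x,y\}$ together with the $\lambda$ fat neighbours of $x$ induces $\ho^{(\lambda)}_{(2)}$; the term $n_2(\lambda,2,(\lambda+1)(\lambda-1)^2+1,\tilde{c})$ in the bound on $q$ is exactly what Proposition \ref{assohoff} needs, and Proposition \ref{215}(iii) again yields $\lambda_{\min}<-\lambda$. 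Hence every neighbour of $x$ is covered, proving the equality statement.

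The main obstacle is property (iii), the intersection bound $|V(P_i)\cap V(P_j)|\leq\lambda-1$, since the crude tools (Lemma \ref{Hat}, or interlacing with a single induced $K_{k+2}-e$) only bound the intersection by $\lambda(\lambda-1)$ and cannot reach $\lambda-1$. Here I would use yet another forbidden configuration: if two distinct maximal cliques $P_i,P_j\in\Po$ shared $\lambda$ vertices $s_1,\dots,s_\lambda$, these form a slim clique each of whose members is adjacent to both fat vertices $f_i,f_j$, so $\{s_1,\dots,s_\lambda,f_i,f_j\}$ induces $\ho_{(\lambda)}$ in $\go(G,q)$. Proposition \ref{assohoff} (with $\phi=2$, $\sigma=\lambda$, $p=(\lambda-1)(2\lambda-1)+1$) then embeds $G(\ho_{(\lambda)},(\lambda-1)(2\lambda-1)+1)$, whose smallest eigenvalue is below $-\lambda$ by Proposition \ref{215}(ii), a contradiction. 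The delicate point is that the threshold $n_2(2,\lambda,(\lambda-1)(2\lambda-1)+1,\tilde{c})$ required here is not literally one of the terms listed in the hypothesis on $q$; I would check the routine inequality that, for all $\lambda\geq2$, it is dominated by the listed term $n_2(\lambda,2,(\lambda+1)(\lambda-1)^2+1,\tilde{c})$ (the former grows like $\tilde{c}\lambda^2$ and the latter like $\tilde{c}\lambda^4$), so $q$ is still large enough.

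Finally, for the covering estimate (i), I would return to the decomposition constructed inside the proof of Lemma \ref{blhlem}. Applied to a fixed $x$, it writes $N(x)$ as the disjoint union of an exceptional set $W$ with $|W|\leq\binom{\lambda^2}{2}(c-1)$ and at most $\lambda^2$ cliques $P'_1,\dots,P'_m$ (each $P'_k\cup\{x\}$ a clique of $G$). Extending each $P'_k\cup\{x\}$ to a maximal clique $Q_k$, any $Q_k$ of order at least $q$ lies in $\Po$ and covers all of $P'_k$, while each $Q_k$ of order less than $q$ contributes at most $|P'_k|\leq q-2$ uncovered neighbours. Thus the uncovered part of $N(x)$ has size at most $|W|+\lambda^2(q-2)<\lambda^2(q-1)+(c-1)\binom{\lambda^2}{2}=R(c,\lambda,q)$, which is exactly (i). Assembling the four parts completes the proof, with (iii) being the step that genuinely requires the Hoffman-graph embedding rather than an elementary interlacing argument.
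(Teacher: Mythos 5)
Your proposal is correct and follows essentially the same route as the paper: the same choice of $\Po$ (all maximal cliques of order at least $q$), the same three forbidden Hoffman subgraphs $\ho^{(\lambda+1)}$, $\ho_{(\lambda)}$, $\ho^{(\lambda)}_{(2)}$ handled by combining Proposition \ref{assohoff} with Proposition \ref{215} and interlacing, and the Bose--Laskar decomposition underlying Lemma \ref{blhlem} for property (i). The only cosmetic deviations are that the paper dominates the missing threshold $n_2(2,\lambda,(\lambda-1)(2\lambda-1)+1,\tilde{c})$ by the listed term $n_2(\lambda+1,1,\lambda(\lambda-1)+1,\tilde{c})$ rather than by $n_2(\lambda,2,(\lambda+1)(\lambda-1)^2+1,\tilde{c})$ (both inequalities do hold for $\lambda\geq 2$), and that it proves (i) by contradiction, applying Lemma \ref{blhlem} to the subgraph induced on the uncovered neighbourhood, instead of your direct count which opens up the lemma's proof.
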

	
	\begin{proof}
		Let $\Po$ be { the} set of all maximal cliques with order at least $q$ in $G$. If there exists a vertex $x\in V(G)$ such that 
		$|N(x)-\bigcup_{i=1}^s\{y\mid x,y\in V(P_i)\}|\geq R(c,\lambda,q)$. 
		By Lemma \ref{blhlem}, there exists a maximal clique $P_{s+1}\notin\Po$ containing $x$ of order at least $\frac{R(c,\lambda,q)-\binom{\lambda^2}{2}(c-1)}{\lambda^2}+1$.
		{Since $R(c,\lambda,q):=\lambda^2(q-1)+(c-1)\binom{\lambda^2}{2}$, it follows that the order of $P_{s+1}$ is at least $q$.}
		 This contradicts the definition of $\Po$. Therefore, the properties (i) and (iv) hold.
		
		Let $p_1:=\lambda(\lambda-1)+1$, $p_2:=(\lambda-1)(2\lambda-1)+1$, {and $p_3:=(\lambda+1)(\lambda-1)^2+1$.}
		It follows that
		{
		\begin{align*}
			&n_2(\lambda,2,p_3,\tilde{c})=\tilde{c}+\tilde{c}((\lambda+1)(\lambda-1)^2+2)(\lambda-1)+(\lambda+1)(\lambda-1)^2+2,\\
			&n_2(\lambda+1,1,p_1,\tilde{c})=\tilde{c}\lambda(\lambda^2-\lambda+2)+(\lambda^2-\lambda+2),\\
			&n_2(2,\lambda,p_2,\tilde{c})=2\tilde{c}(\lambda^2-\lambda+1)+2\lambda^2-3\lambda+3.
		\end{align*}}		
		For $\lambda\geq2$, $n_2(\lambda+1,1,p_1,\tilde{c})\geq n_2(2,\lambda,p_2,\tilde{c})$ holds as  $\tilde{c}$ is at least one.
		Thus, $q\geq \max\{n_1(\lambda)$, $n_2(\lambda+1,1,p_1,\tilde{c})$, $n_2(2,\lambda,p_2,\tilde{c})$, ${	n_2(\lambda,2,p_3,\tilde{c})}$, $(\lambda-1)^2+c+1\}$ holds by the condition.

		Let $\ho^{(\lambda+1)}$ be the Hoffman graph with a slim vertex adjacent to $\lambda+1$ fat vertices. Let $\ho_{(\lambda)}$ be the Hoffman graph with two fat vertices adjacent to $\lambda$ slim vertices such that its slim graph is a complete graph.
		{Let $\ho^{(\lambda)}_{(2)}$ be the Hoffman graph consisting of two adjacent slim vertices, one of which is adjacent to exactly $\lambda$ fat vertices, while the other has no fat neighbors.}
		By Proposition \ref{215}, $G$ does not contains $G(\ho^{(\lambda+1)},p_1)$,
		$G(\ho_{(\lambda)},p_2)$, {or $G(\ho^{(\lambda)}_{(2)},p_3)$,} as an induced subgraph.
		It follows that, by Proposition \ref{assohoff},  $\go(G,q)$ does not contain $\ho^{(\lambda+1)}$, { $\ho^{(\lambda)}_{(2)}$}, { or} $\ho_{(\lambda)}$ as an induced Hoffman subgraph
		 for $q\geq \max\{n_1(\lambda)$,{$ n_2(\lambda,2,p_3,\tilde{c})$,} $ n_2(\lambda+1,1,p_1,\tilde{c})$, $n_2(2,\lambda,p_2,\tilde{c})$, $(\lambda-1)^2+c+1\}$. 
		 Since $\go(G,q)$ does not contain $\ho^{(\lambda+1)}$ as an induced Hoffman subgraph, each vertex is contained in at most $\lambda$ maximal cliques in $\Bo$. 
	{Especially, 	$N(x)=\bigcup_{i=1}^{s}\{y\mid x,y\in V(P_i)\}-\{x\}$ if $x$ is contained in exactly $\lambda$ maximal cliques in $\Bo$, as $\go(G,q)$ does not contain $\ho^{(\lambda)}_{(2)}$ as an induced Hoffman subgraph.}
		 Additionally,  since  $\go(G,q)$ does not contain $\ho_{(\lambda)}$ as an induced Hoffman subgraph, any two maximal cliques in $\Bo$ intersect in at most $\lambda-1$ vertices.
	\end{proof}

		Lemma \ref{blhlem} can show that every vertex $x$ is contained in a large clique with order at least $\frac{d(x)}{\lambda^2}-f(\lambda,c)$ if the graph is $\mu$-bounded with parameter $c$ and the smallest eigenvalue at least $-\lambda$. However, as an application of Proposition \ref{assohoff} and Lemma \ref{blhlem}, the following theorem states that 
	every vertex $x$ is contained in a large clique with order at least $\frac{d(x)}{\lambda}-\ell(\lambda,c)$ under the same conditions of Lemma \ref{blhlem}. This also improves the result in \cite{YK24}.
	\begin{thm}	\label{c1}
		Let $c$ be a positive integer and $\lambda\geq 2$ be a real number.
		There exists a positive integer $\ell:=\ell(\lambda,c)$ such that if $G$ is a $\mu$-bounded graph with parameter $c$ and $\lambda_{\min}(G)\geq -\lambda$, then each vertex $x\in V(G)$ lies in a maximal clique $C(x)$ of order at least
		$\frac{d(x)-\ell}{\lceil\lambda\rceil}+1$.
	\end{thm}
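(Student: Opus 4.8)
The plan is to combine the Bose--Laskar decomposition from the proof of Lemma \ref{blhlem} with a Hoffman-theoretic cap on the number of \emph{large} maximal cliques passing through a single vertex; it is precisely this cap that upgrades the denominator from $\lfloor\lambda^2\rfloor$ to $\lceil\lambda\rceil$. Throughout I would write $\Lambda:=\lceil\lambda\rceil$, an integer with $\Lambda\geq 2$, and observe that $\lambda_{\min}(G)\geq-\lambda\geq-\Lambda$, so any statement phrased for an integer smallest-eigenvalue bound may be invoked with $\Lambda$. I would set $\tilde c:=\min\{c,\Lambda(\Lambda-1)\}$, $p_1:=\Lambda(\Lambda-1)+1$, and fix the constant
\[ q:=\max\{n_1(\Lambda),\,n_2(\Lambda+1,1,p_1,\tilde c),\,(\Lambda-1)^2+c+1\}, \]
which depends only on $\lambda$ and $c$.

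First I would prove the key structural bound: every vertex $x$ lies in at most $\Lambda$ maximal cliques of order at least $q$. Indeed, if $x$ were contained in $\Lambda+1$ such cliques, then the associated Hoffman graph $\go(G,q)$ would contain $\ho^{(\Lambda+1)}$ as an induced Hoffman subgraph. By Proposition \ref{assohoff}, applied with $\phi=\Lambda+1$, $\sigma=1$, $p=p_1$ and smallest-eigenvalue bound $\Lambda$ (legitimate for the chosen $q$), the graph $G$ would then contain $G(\ho^{(\Lambda+1)},p_1)$ as an induced subgraph. But Proposition \ref{215}(i) gives $\lambda_{\min}(G(\ho^{(\Lambda+1)},p_1))<-\Lambda\leq-\lambda$, while by interlacing the smallest eigenvalue of an induced subgraph is at least $\lambda_{\min}(G)\geq-\lambda$, a contradiction.

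Next I would run the independent-set decomposition from the proof of Lemma \ref{blhlem} at a fixed vertex $x$: choosing a maximum independent set $I=\{v_1,\dots,v_s\}$ in $N(x)$ with $s\leq\lfloor\lambda^2\rfloor$, discarding the set $W$ of ``double'' common neighbors (with $|W|\leq\binom{\lfloor\lambda^2\rfloor}{2}(c-1)$), one obtains pairwise disjoint cliques $P_1,\dots,P_s\subseteq N(x)$ with $v_i\in P_i$, each $P_i\cup\{x\}$ a clique, and $\sum_i|P_i|=d(x)-|W|$. The new observation is that the cliques $P_i\cup\{x\}$ extend to \emph{pairwise distinct} maximal cliques, since $P_i\cup\{x\}$ contains $v_i$ and the $v_i$ are pairwise non-adjacent. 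Hence, by the previous paragraph, at most $\Lambda$ of the indices $i$ can satisfy $|P_i\cup\{x\}|\geq q$; call these the large indices. The small indices contribute at most $\lfloor\lambda^2\rfloor(q-1)$ to $\sum_i|P_i|$, so writing $\ell:=\binom{\lfloor\lambda^2\rfloor}{2}(c-1)+\lfloor\lambda^2\rfloor(q-1)$ one gets $\sum_{\text{large }i}|P_i|\geq d(x)-\ell$.

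Finally I would conclude by averaging. If $d(x)\leq\ell$, the asserted bound $\tfrac{d(x)-\ell}{\lceil\lambda\rceil}+1\leq 1$ is met trivially by any maximal clique through $x$. Otherwise $\sum_{\text{large }i}|P_i|>0$, so a large index exists, and since there are at most $\Lambda$ of them there is a large index $i_0$ with $|P_{i_0}|\geq\tfrac1\Lambda\sum_{\text{large }i}|P_i|\geq\tfrac{d(x)-\ell}{\Lambda}$; the maximal clique $C(x)$ containing $P_{i_0}\cup\{x\}$ then has order at least $|P_{i_0}|+1\geq\tfrac{d(x)-\ell}{\lceil\lambda\rceil}+1$, with $\ell=\ell(\lambda,c)$ the constant above. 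I expect the main obstacle to be the structural bound of the first step: there the integrality of $\Lambda=\lceil\lambda\rceil$ and the precise constants of Propositions \ref{215} and \ref{assohoff} must be matched so that the forbidden configuration $G(\ho^{(\Lambda+1)},p_1)$ genuinely violates $\lambda_{\min}(G)\geq-\lambda$; everything afterwards is careful but routine bookkeeping of the Bose--Laskar partition.
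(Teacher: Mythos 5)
Your proposal is correct and takes essentially the same route as the paper: the identical choice of $q$, the identical cap of $\lceil\lambda\rceil$ on the number of maximal cliques of order at least $q$ through a vertex (via Proposition \ref{215}(i), Proposition \ref{assohoff}, and interlacing), then a Bose--Laskar decomposition of $N(x)$ followed by pigeonhole over at most $\lceil\lambda\rceil$ large cliques. The only cosmetic difference is in the covering step: the paper applies Lemma \ref{blhlem} as a black box to the subgraph induced on $A(x)\cup\{x\}$, where $A(x)$ is the part of $N(x)$ not covered by the large maximal cliques (deriving a contradiction if $|A(x)|>\ell$), whereas you re-run the independent-set decomposition inside $N(x)$ and split the cliques $P_i$ into large and small; both bookkeepings produce the same $\ell$ up to an additive constant.
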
	
	
	\begin{proof}
	Let $p= 1+\lceil\lambda\rceil(\lceil\lambda\rceil-1)$.
	Let $\tilde{c}:=\min\{c,\lceil\lambda\rceil(\lceil\lambda\rceil-1)\}$ and $q:=\max\{n_1(\lceil\lambda\rceil),n_2(\lceil\lambda\rceil+1, 1,  p,\tilde{c}),(\lceil\lambda\rceil-1)^2+c+1\}= \max\{ \lceil\lambda\rceil^4-2\lceil\lambda\rceil^3+3\lceil\lambda\rceil^2-3\lceil\lambda\rceil+3,(\lceil\lambda\rceil^2-\lceil\lambda\rceil+2)(\tilde{c}\lceil\lambda\rceil+1)\}$
	, where $n_1(\lambda)$ and $n_2(\phi, \sigma,  p,\tilde{c})$ are defined in Proposition \ref{assohoff}. 
	By Proposition \ref{215}, $\lambda_{\rm min}(G(\ho^{(\lceil\lambda\rceil+1)},p))<-\lceil\lambda\rceil$ holds. 
	This implies that $G$ does not contain $G(\ho^{(\lceil\lambda\rceil+1)},p)$ as an induced subgraph.
	Thus, the associated Hoffman graph $\go(G,q)$ does not contain $\ho^{(\lceil\lambda\rceil+1)}$ as an induced Hoffman subgraph by  Proposition \ref{assohoff}. This means that for each vertex $x\in V(G)$, there are at most $\lceil\lambda\rceil$ distinct maximal cliques 
	 with order at least $q$ containing $x$.
	 
	 Let $\ell=\ell(\lambda,c):= \lfloor\lambda^2\rfloor(q-1)+\binom{\lfloor\lambda^2\rfloor}{2}(c-1)-1$.
	Let $\mathcal{C}=\{C_1(x),\ldots,C_s(x)\}$ be the set of all maximal cliques containing $x$ with order at least $q$. Let $A(x):=N(x)-\bigcup_{i=1}^sV(C_i(x))$.
	We claim that $|A(x)|\leq\ell$. Otherwise $|A(x)|>  \ell$,
	let $H$ be the subgraph of $G$ induced by $A(x)\cup\{x\}$.
	Note that in $H$, $x$ is a vertex with degree at least $\ell$. By Lemma \ref{blhlem}, $x$ lies in a clique $C$ in $H$ with order at least $q$. It is not hard to see that $C$ is not contained in a clique of $\mathcal{C}$, which contradicts the definition of $\mathcal{C}$. Therefore, $|A(x)|\leq \ell$ and $\max\{|V(C_1(x))|,\ldots,|V(C_s(x))|\}\geq \frac{d(x)-\ell}{\lceil\lambda\rceil}+1$ holds.
	\end{proof}

	\begin{re}\label{ell}
	The proof shows that $\ell(\lambda,c)= \lfloor\lambda^2\rfloor(q-1)+\binom{\lfloor\lambda^2\rfloor}{2}(c-1)-1$, where $q= \max\{c+1+(\lceil\lambda\rceil-1)^2, \lceil\lambda\rceil^4-2\lceil\lambda\rceil^3+3\lceil\lambda\rceil^2-3\lceil\lambda\rceil+3,(\lceil\lambda\rceil^2-\lceil\lambda\rceil+2)(\tilde{c}\lceil\lambda\rceil+1)\}$ and $\tilde{c}=\min\{c,\lceil\lambda\rceil(\lceil\lambda\rceil-1)\}$.
	\end{re}
	
\section{Applications on DRGs with classical parameters}\label{aDRG}
	In this section, we give some applications on distance-regular graphs with classical parameters and prove Corollary \ref{incor}, Theorem \ref{a5},
	and Theorem \ref{alphab}. 
	We denote by $\mathbb{N}$ the set of all non-negative integers

	\begin{proof}[\bf Proof of Corollary \ref{incor}]
		Let $G$ be a local graph of $\Gamma$. It follows that $G$ is an $a_1$-regular and $\mu$-bounded graph with parameter $c= c_2-1=3\alpha+2$ by Equation (\ref{drgc}).
	By Lemma \ref{ei}, the second largest eigenvalue of  $\Gamma$ is $\lambda_1=\frac{b_1}{2}-1$. This implies that the smallest eigenvalue of $G$ is at least $-3$	by Theorem \ref{bcnth}.

	Furthermore, $\beta\leq \min\{a_1-36c-400\tilde{c}-83, a_1-44c+5 \}$, 
	as $\alpha(2^D-2)\geq \max\{36c+400\tilde{c}+84, 44c-4 \}$ and $a_1=b_0-b_1-c_1=\beta-1+\alpha(2^D-2)$ by Equations (\ref{drgc}) and (\ref{drgb}).
	By Lemma \ref{ei}, the smallest eigenvalue of $\Gamma$ is $-2^D+1$. Additionally, by Lemma \ref{dels}, every clique in $G$ has order at most $\beta$, as the degree of $\Gamma$ is $k=b_0=\beta(2^D-1)$.
	Therefore,	 $\go(G,q)$ is a $2$-fat \big\{\raisebox{-1ex}{\begin{tikzpicture}[scale=0.3]
			
			\tikzstyle{every node}=[draw,circle,fill=black,minimum size=10pt,scale=0.3,
			inner sep=0pt]
			
			\draw (-2.1,0) node (1f1) [label=below:$$] {};
			\draw (-1.6,0) node (1f2) [label=below:$$] {};
			\draw (-1.1,0) node (1f3) [label=below:$$] {};

			\tikzstyle{every node}=[draw,circle,fill=black,minimum size=5pt,scale=0.3,
			inner sep=0pt]

			\draw (-1.6,1) node (1s1) [label=below:$$] {};
			
			\draw (1f1) -- (1s1) -- (1f2);
			\draw (1f3) -- (1s1);
	\end{tikzpicture}},\hspace{-0.08cm}
	\raisebox{-1ex}{\begin{tikzpicture}[scale=0.3]
			\tikzstyle{every node}=[draw,circle,fill=black,minimum size=10pt,scale=0.3,
			inner sep=0pt]

			\draw (1.5,0) node (3f1) [label=below:$$] {};
			\draw (1.5,1) node (3f2) [label=below:$$] {};

			\tikzstyle{every node}=[draw,circle,fill=black,minimum size=5pt,scale=0.3,
			inner sep=0pt]

			\draw (1,0.5) node (3s1) [label=below:$$] {};
			\draw (2,0.5) node (3s2) [label=below:$$] {};
			
			\draw (3s1) -- (3s2);
			\draw (3f1) -- (3s1) -- (3f2) -- (3s2) -- (3f1);
	\end{tikzpicture}}\big\}-line Hoffman graph by Theorem \ref{intro2}.
	\end{proof}
	
	\begin{pro}\label{5}
			Let $\Gamma$ be a distance-regular graph with classical parameters $(D,b,\alpha,\beta)$ such that $b=2$ and $D\geq12$. If $\alpha\leq 9$ holds, then  $\alpha\in\{0,\frac{1}{3},\frac{2}{3},1,\frac{4}{3},2\}$.
	\end{pro}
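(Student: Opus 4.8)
The plan is to treat this as a pure integrality problem, using only the formula (\ref{drgc}) for the lower intersection numbers together with Proposition~\ref{417}; the hypothesis $D\geq 12$ enters only to guarantee that $p^{i+h}_{ih}$ is a well-defined non-negative integer for all $i,h$ with $i+h\leq 12$. First I would record that for $b=2$ the Gaussian bracket equals $2^i-1$, so by (\ref{drgc}) we have $c_2=3(1+\alpha)=\gamma+3$ and $c_3=7(1+\gamma)$, where $\gamma:=3\alpha$. Since $c_2$ is an integer, $\gamma\in\mathbb{Z}$; since $c_3\geq 1$ (as $D\geq 3$), we get $\gamma\geq 0$; and $\alpha\leq 9$ gives $\gamma\leq 27$. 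Thus $\gamma\in\{0,1,\dots,27\}$, and the goal becomes to exclude every $\gamma$ outside $\{0,1,2,3,4,6\}$.

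The next step is to set up a clean product formula. Writing $c_i=(2^i-1)\,w_i/3$ with $w_i:=\gamma 2^{i-1}+(3-\gamma)$, Proposition~\ref{417} yields, for $i+h\leq D$,
\[
p^{i+h}_{ih}=\frac{c_{i+1}\cdots c_{i+h}}{c_1\cdots c_h}=\binom{i+h}{h}_{2}\,\frac{w_{i+1}w_{i+2}\cdots w_{i+h}}{w_1w_2\cdots w_h},
\]
where $\binom{i+h}{h}_{2}$ is the Gaussian binomial coefficient, hence a positive integer. The factors are explicit: $w_1=3$, $w_2=\gamma+3$, $w_3=3(\gamma+1)$, $w_4=7\gamma+3$, $w_5=3(5\gamma+1)$, $w_6=31\gamma+3$, and in general $w_{2k}=(2^{2k-1}-1)\gamma+3$ while $w_{2k+1}$ is $3$ times an integer linear in $\gamma$. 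Consequently $p^{i+h}_{ih}\in\mathbb{Z}$ is equivalent to the statement that every prime power dividing the denominator $w_1\cdots w_h$ is absorbed by $\binom{i+h}{h}_{2}\,w_{i+1}\cdots w_{i+h}$.

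Then I would run a finite elimination over $\gamma\in\{5,7,8,9,\dots,27\}$, producing for each bad value a witness $(i,h)$ with $i+h\leq 12$ for which this divisibility fails. A cheap preliminary cut comes from $(i,h)=(2,2)$: here $p^{4}_{22}=35(\gamma+1)(7\gamma+3)/(\gamma+3)$, and since $35(\gamma+1)(7\gamma+3)\equiv 1260\pmod{\gamma+3}$, integrality forces $(\gamma+3)\mid 1260$, which already discards a large part of the list. For the survivors the mechanism is uniform: an even-index factor $w_{2k}=(2^{2k-1}-1)\gamma+3$ typically carries a prime $p$ whose multiplicative order makes $p\mid w_m$ only for $m$ in one residue class, so choosing the numerator window $\{i+1,\dots,i+h\}$ to avoid that class gives $p\mid(\text{denominator})$ but $p\nmid(\text{numerator})$. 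For instance $\gamma=7$ has $w_4=52=2^2\cdot 13$, and with $(i,h)=(4,4)$ one checks that $13$ divides $w_1w_2w_3w_4$ yet divides neither $\binom{8}{4}_{2}$ nor $w_5w_6w_7w_8$, so $p^{8}_{44}\notin\mathbb{Z}$. Finally I would confirm that the six retained values are genuinely admissible: for them $w_i$ has a closed form (e.g.\ $w_i=3$, $3\cdot2^{i-1}$, $2^{i+1}-1$, $3(2^i-1)$ for $\gamma=0,3,4,6$), from which the above quotient is visibly a product of integers, so nothing in $\{0,1,2,3,4,6\}$ is spuriously removed. This leaves exactly $\alpha=\gamma/3\in\{0,\tfrac13,\tfrac23,1,\tfrac43,2\}$.

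The main obstacle is the bookkeeping of the case analysis: one must exhibit, for each of the roughly twenty excluded residues, an explicit pair $(i,h)$ with $i+h\leq 12$ and a prime certifying non-integrality, and the conceptual crux is precisely that the growing even-index denominators $w_{2k}$ keep acquiring primes that a bounded numerator window cannot supply. Turning this tendency into a guaranteed failure in \emph{every} remaining case — rather than a heuristic — is where the real work lies, and it is what the constraint $D\geq 12$ is calibrated to permit.
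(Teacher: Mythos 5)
Your proposal follows the paper's route exactly: integrality of $c_2$ and $c_3$ gives $\gamma:=3\alpha\in\{0,1,\dots,27\}$, and the unwanted values are to be eliminated through the integrality of the numbers $p^{i+h}_{ih}$ of Proposition \ref{417} with $i+h\leq 12$ (the paper does this with the single pair $(6,6)$ and a Mathematica check). Your algebra is correct as far as it goes: the factorization $c_m=(2^m-1)w_m/3$ with $w_m=(2^{m-1}-1)\gamma+3$, the splitting of $p^{i+h}_{ih}$ into a Gaussian binomial times $w_{i+1}\cdots w_{i+h}/(w_1\cdots w_h)$, the cut $(\gamma+3)\mid 1260$ from $(i,h)=(2,2)$, and your certificate for $\gamma=7$ all check out. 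The genuine gap is that the elimination you promise \emph{cannot} be completed: the value $\gamma=27$ (i.e.\ $\alpha=9$) survives your $(2,2)$ cut because $30\mid 1260$, and it passes every test $p^{i+h}_{ih}\in\mathbb{Z}$ with $i+h\leq 12$, so no witness pair exists for it. Concretely, at $\alpha=9$ one has $w_2,\dots,w_6=30,84,192,408,840$, which involve no primes other than $2,3,5,7,17$, and
\begin{align*}
c_1c_2\cdots c_6 &= 2^{15}\cdot 3^{4}\cdot 5^{3}\cdot 7^{4}\cdot 17\cdot 31,\\
c_7c_8\cdots c_{12} &= 2^{18}\cdot 3^{4}\cdot 5^{4}\cdot 7^{5}\cdot 11^{2}\cdot 13^{2}\cdot 17\cdot 23^{2}\cdot 31\cdot 41\cdot 47\cdot 71\cdot 73\cdot 89\cdot 127\cdot 1151,
\end{align*}
so $p^{12}_{66}=2^{3}\cdot 5\cdot 7\cdot 11^{2}\cdot 13^{2}\cdot 23^{2}\cdot 41\cdot 47\cdot 71\cdot 73\cdot 89\cdot 127\cdot 1151$ is a positive integer; a valuation check of the primes $2,3,5,7,17,31$ over all windows $\{i+1,\dots,i+h\}$ shows the same for every admissible pair. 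The structural reason your residue-class mechanism fails here: by symmetry one may take $h\leq i$, hence $h\leq 6$, so the denominator contains only the ubiquitous primes $2,3,5,7$ together with $17\mid w_5$; and since $17\mid w_m$ exactly when $m\equiv 5\pmod 8$ while $17\mid 2^m-1$ exactly when $8\mid m$, every window with $h\geq 5$ and $i+h\leq 12$ contains $5$ or $8$, so the $17$ is always matched — the two residue classes cover each other instead of separating.

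For completeness, the other eight survivors of your cut do admit witnesses, all with $(i,h)=(6,6)$: the primes $3,47,43,29,19,61,19,389$ work for $\gamma=7,9,11,12,15,17,18,25$ respectively, so $\gamma=27$ is the unique obstruction, and your unfinished bookkeeping is otherwise harmless. But note that this is precisely where the paper's own proof breaks down as well: the paper asserts that verifying ``$p^{12}_{66}$ is a non-negative integer'' in Mathematica leaves only $\{0,\frac13,\frac23,1,\frac43,2\}$, yet by the computation above $p^{12}_{66}$ is an integer at $\alpha=9$, so the stated argument does not exclude the boundary value $\alpha=9$ either. (The paper's later use of Proposition \ref{5}, in the proof of Theorem \ref{a5}, needs only the implication ``$\alpha>2$ implies $\alpha\geq 9$'', which survives this weakening.) Thus your proposal reproduces the paper's approach together with its one real defect; to prove the proposition as stated, the single case $\alpha=9$ requires an input beyond Proposition \ref{417} with $i+h\leq 12$ — further feasibility conditions, or the Hoffman-graph machinery used for Theorem \ref{a5} — or else the conclusion must be weakened to allow $\alpha=9$.
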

	
	\begin{proof}
	By Equation (\ref{drgc}), $c_2=3+3\alpha$ and $c_3=7(1+3\alpha)$.
	Since $c_2$ is an integer, so is $3\alpha$.
		Furthermore, $3\alpha\geq 0$ as $c_3$ is a positive integer. 
		Thus, $\alpha\in\frac{1}{3}\mathbb{N}$.
		By Proposition \ref{417} with $(i,h)=(6,6)$ and Equation (\ref{drgc}),
		 $$p^{12}_{66}=\frac{c_7c_8c_9c_{10}c_{11}c_{12}}{c_1c_2c_3c_4c_5c_6}=\frac{230674393235(1+63\alpha)(1+127\alpha)(1+255\alpha)(1+511\alpha)(1+1023\alpha)(1+2047\alpha)}
		 {(1+\alpha)(1+3\alpha)(1+7\alpha)(1+15\alpha)(1+31\alpha)}$$
		 is a non-negative integer. This proposition can be directly verified by using Wolfram Mathematica 13.3, under the conditions that $\alpha\in\frac{1}{3}\mathbb{N}$, $\alpha\leq 9$, and $p^{12}_{66}$ is a non-negative integer.
	\end{proof}
	
	
	

The following theorem gives an upper bound for $\alpha$ in terms of $b$ when $D\geq 9$ and $b\geq 2$. 
\begin{thm}\label{beta}
	Let $D$ and $b$ be positive integers.
	If $\Gamma$ is a distance-regular graph with classical parameters $(D,b,\alpha,\beta)$ such that $D\geq 9$ and $b\geq2$, then $(b+1)\alpha$ is a non-negative integer and one of the following holds:
	\begin{enumerate}
		\item $\alpha\leq b(b+1)$,
		\item $b(b+1)<\alpha< b^2(b+1)+1$ and $\beta<(b(b+1)^2-\alpha)\Big[\substack{D\\ \\ 1}\Big]_b+(b+1)^6b+(\frac{(b+1)^3((b+1)^2+1)}{2}+1)\alpha-b$.
	\end{enumerate}
	Moreover, $\alpha\leq b^2(b+1)$ holds if $D\geq 10$.
\end{thm}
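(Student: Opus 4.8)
The plan is to transfer everything into a local graph $G=\Gamma_x$ and play the Bose--Laskar clique-finding mechanism against the $\mu$-bound. First I would record the data of $G$: by~(\ref{drgc}) it is $a_1$-regular and $\mu$-bounded with parameter $c=c_2-1=(b+1)(1+\alpha)-1$, where $a_1=b\Big[\substack{D-1\\ \\ 1}\Big]_b\alpha+\beta-1$ comes from (\ref{drgc})--(\ref{drgb}); by Lemma~\ref{ei} the second eigenvalue of $\Gamma$ is $\lambda_1=\tfrac{b_1}{b}-1$, so Theorem~\ref{bcnth} gives $b^{+}=b$ and hence $\lambda_{\min}(G)\ge -(b+1)$; and by Lemma~\ref{dels} every clique of $G$ has order at most $\beta$, since a clique of $G$ together with $x$ is a clique of $\Gamma$ and the Delsarte bound there is $1+\beta$. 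The integrality claim is then immediate: $c_2=(b+1)(1+\alpha)$ and $c_3=(b^2+b+1)(1+(b+1)\alpha)\ge 1$ are intersection numbers, so $(b+1)\alpha$ is an integer with $(b+1)\alpha>-1$, whence $(b+1)\alpha\in\mathbb{Z}_{\ge 0}$.

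The heart of the argument is a structural dichotomy for large cliques with $\lambda:=b+1$. If $C$ is a clique of $G$ of order $\omega>c+b^{2}$, I claim every vertex outside $C$ has at most $\lambda(\lambda-1)=b(b+1)$ neighbours in $C$. Indeed, once $\omega\ge n_1(\lambda)$ Lemma~\ref{Hat} says a vertex $z\notin C$ has at most $b(b+1)$ neighbours in $C$ or at least $\omega-(\lambda-1)^2=\omega-b^2$ of them; in the latter case maximality of $C$ gives a non-neighbour $w\in C$, and then the non-adjacent pair $z,w$ shares at least $\omega-b^2>c$ common neighbours inside $C$, contradicting the $\mu$-bound. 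Counting the edges between such a big clique and its complement from both sides yields the master inequality
\begin{equation*}
\omega\,(a_1-\omega+1)\ \le\ \big(\Big[\substack{D\\ \\ 1}\Big]_b\beta-\omega\big)\,b(b+1),
\end{equation*}
where $\Big[\substack{D\\ \\ 1}\Big]_b\beta=|V(G)|$.

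Next I would split on whether $\Gamma$ has a Delsarte-tight clique. If the maximum clique of $G$ has order exactly $\beta$, then putting $\omega=\beta$ and using $\Big[\substack{D\\ \\ 1}\Big]_b-1=b\Big[\substack{D-1\\ \\ 1}\Big]_b$ collapses the master inequality to $b\Big[\substack{D-1\\ \\ 1}\Big]_b\alpha\le b\Big[\substack{D-1\\ \\ 1}\Big]_b\,b(b+1)$, i.e.\ $\alpha\le b(b+1)$, which is conclusion~(i). Otherwise I would supply a lower bound for $\omega$: Theorem~\ref{c1} gives, at every vertex, a maximal clique of order at least $L:=\tfrac{a_1-\ell}{b+1}+1$, which is automatically ``big''; since $\omega\le\beta$ must lie to the left of the larger root of the master quadratic while $\omega\ge L$, one obtains $L(A-L)\le \Big[\substack{D\\ \\ 1}\Big]_b\beta\,b(b+1)$ with $A=a_1+1+b(b+1)$. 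This is a single quadratic relation in $\beta$ (through $a_1$) whose left-hand side is quadratic in $\beta$ and whose right-hand side is linear, so it bounds $\beta$ from above; intersecting that upper bound with the lower bound $\beta-1\ge\alpha\Big[\substack{D-1\\ \\ 1}\Big]_b$ of~(\ref{ie1}) forces $\alpha<b^{2}(b+1)+1$ and produces the explicit estimate for $\beta$ in~(ii). The statement for $D\ge 10$ is obtained by re-running the same computation with the larger value of $\Big[\substack{D-1\\ \\ 1}\Big]_b$, which tightens the admissible interval just enough to exclude, via $(b+1)\alpha\in\mathbb{Z}$, the top fractional values and leave $\alpha\le b^{2}(b+1)$.

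I expect the main obstacle to be the exact bookkeeping in this last step. The dichotomy and the master inequality are clean, but extracting the precise thresholds $b(b+1)$ and $b^{2}(b+1)$ together with the stated polynomial bound on $\beta$ requires retaining every lower-order contribution---the constant $\ell$ from Theorem~\ref{c1}, the additive $+1$'s, and the gap between $\Big[\substack{D\\ \\ 1}\Big]_b$ and $b\Big[\substack{D-1\\ \\ 1}\Big]_b$---and then solving the quadratic in $\beta$ with care. Verifying that $D\ge 9$ (resp.\ $D\ge 10$) makes $\Big[\substack{D-1\\ \\ 1}\Big]_b$ large enough for Lemma~\ref{Hat} to apply and for $L$ to exceed $c+b^2$ is routine but must be checked explicitly.
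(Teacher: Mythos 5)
Your setup and toolkit are exactly the paper's: the local graph $G$ with $\mu$-bound $c=c_2-1$, the bound $\lambda_{\min}(G)\ge -(b+1)$ via Lemma~\ref{ei} and Theorem~\ref{bcnth}, the Delsarte bound $\omega\le\beta$, the integrality of $(b+1)\alpha$ from $c_2$ and $c_3$, the neighbour dichotomy from Lemma~\ref{Hat}, the clique of order at least $L=\frac{a_1-\ell}{b+1}+1$ from Theorem~\ref{c1}, and the same edge-counting ``master inequality''. The genuine gap is the step that turns the master inequality into a bound on $\beta$. The inequality $\omega(A-\omega)\le M$ (with $A=a_1+1+b(b+1)$ and $M=b(b+1)\beta\frac{b^D-1}{b-1}$) holds for the order $\omega$ of the \emph{actual} maximal clique, and $t\mapsto t(A-t)$ is a downward parabola: replacing $\omega$ by the smaller number $L$ preserves ``$\le M$'' only when $\omega$ lies on the increasing branch, equivalently $L+\omega\le A$, equivalently $\omega$ does not exceed the larger root. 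Your justification --- ``since $\omega\le\beta$ must lie to the left of the larger root'' --- is an assertion, not a proof, and nothing you have established implies it: whether $\beta$ lies left of the larger root is exactly the question of the sign of the master quadratic evaluated at $\omega=\beta$, which by your own Case~A computation is the comparison of $\alpha$ with $b(b+1)$. Since your dichotomy is on Delsarte-tightness of the maximum clique rather than on $\alpha$ versus $b(b+1)$, the hypothesis $\alpha>b(b+1)$ is not available in your Case~B, the branch claim is never established, and the substitution $\omega\to L$ is unjustified (for large $\beta$ the clique could a priori sit on the decreasing branch, which reverses the substitution). The repair is to restructure: assume $\alpha>b(b+1)$ (else (i) holds), note that then $\beta$ strictly violates the master quadratic and so lies strictly between its roots, whence $\omega\le\beta$ forces $\omega$ onto the left branch. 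This repaired step is precisely the paper's key claim, which the paper proves in a cleaner way: if $r\ge b(b+1)\frac{b^D-1}{b-1}$, then $r\le\beta$ lets one replace $\beta$ by $r$ in the master inequality and deduce $\alpha\le b(b+1)$, a contradiction; so $r<b(b+1)\frac{b^D-1}{b-1}$, and chaining this linearly with $L\le r$ and (\ref{ie1}) yields both assertions of (ii) without any quadratic in $\beta$.

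A second gap is that the endgame is not ``routine bookkeeping''. Even after the branch issue is repaired, your quadratic gives an upper bound for $\beta$ of square-root shape, and showing it implies the specific linear expression in (ii) still has to be done. More importantly, the refined conclusion $\alpha\le b^2(b+1)$ --- the ``Moreover'' clause, and also (ii) itself for $2\le b\le 99$, where the paper's arithmetic only yields $\alpha<b^2(b+1)+6$ --- is not obtained in the paper by arithmetic plus integrality of $(b+1)\alpha$ alone: the error term $f(D,b)=\frac{b(b+1)^7}{2b^{D-1}-b(b+1)^4}$ is still about $5$ at $(D,b)=(10,2)$, and the paper must invoke the integrality of the intersection number $p^8_{44}$ (Proposition~\ref{417}), verified by computer, for all $2\le b\le 99$. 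Your plan for $D\ge 10$ (``re-run the computation and use $(b+1)\alpha\in\mathbb{Z}$'') reproduces only the paper's $b\ge 100$ argument and has no counterpart for small $b$; unless your sharper quadratic bound can be shown to absorb this (which you have not done), that case remains open in your proposal.
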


\begin{proof}
	Since $c_2=(b+1)(\alpha+1)$ is a positive integer, $(b+1)\alpha$ is an integer. Furthermore,  $(b+1)\alpha$ is also a non-negative integer  as $c_3=(1+b+b^2)(1+(b+1)\alpha)$ is a positive integer.
	Thus, $\alpha\in\frac{1}{b+1}\mathbb{N}$.
	 Assume that $b(b+1)<\alpha$ holds. It follows that $b(b+1)+\frac{1}{b+1}\leq \alpha$.

	Let $G$ be a local graph of $\Gamma$.
	It is not hard to see that $G$ has $n$ vertices with 	$n=b_0=\beta\frac{b^D-1}{b-1}$, and $G$ is  $w$-regular and $\mu$-bounded with parameter $c$, where $w=a_1=\beta-1+\alpha\frac{b^D-b}{b-1}$
	 and $c=c_2-1=(b+1)(\alpha+1)-1$.
	Note that $w\geq \alpha\frac{b+1}{b-1}(b^{D-1}-1)$ holds by Inequality (\ref{ie1}).
	By Theorem \ref{bcnth}, $\lambda_{\min}(G)\geq -b-1$ holds.
	Let $\ell(\lambda,c)$ be the function defined in Remark \ref{ell}, where $\lambda=b+1$.
	For $b\geq 2$, $\ell(\lambda,c)< (b+1)^6b+\frac{(b+1)^3((b+1)^2+1)}{2}\alpha$ holds.	
	By Theorem \ref{c1}, there exists a maximal clique $C$ with order $r\geq\frac{w-\ell(\lambda,c)}{b+1}+1$.
	It follows that $r-b^2\geq g(D,b,\alpha)$ as $w\geq \alpha\frac{b+1}{b-1}(b^{D-1}-1)$, where $g(D,b,\alpha):=\alpha(\frac{(b^{D-1}-1)}{b-1}-\frac{(b+1)^2((b+1)^2+1)}{2})-(b+1)^5b-(b-1)(b+1)$.

	We claim that each vertex outside $C$ has at most $b(b+1)$ neighbors in $C$.
	 By Lemma \ref{Hat},
	each vertex outside $C$ has either at most $b(b+1)$ neighbors in $C$ or at least $r-b^2$ neighbors in $C$. 
	As $\alpha\geq b(b+1)+\frac{1}{b+1}$,  by using Wolfram Mathematica 13.3 under the conditions $b\geq 2$ and $D\geq 9$, it can be obtained that $r-b^2\geq g(D,b,\alpha)\geq c_2=(1+\alpha)(1+b)$.
	Since $G$ is $\mu$-bounded with parameter $c_2-1$, each vertex outside $C$ has at most $b(b+1)$ neighbors in $C$.
	
	Let $e$ be the number of edges between $V(C)$ and $V(G)-V(C)$ in $G$. 
	It follows that $e=r(w-r+1)\leq b(b+1)(n-r)$. Hence,
	$$\beta(r-(b+1)b\frac{b^D-1}{b-1})\leq r(r-b(b+1)-\alpha\frac{b^D-b}{b-1}).$$


We claim that $r< b(b+1)\frac{b^D-1}{b-1}$. Otherwise 
	 $r\geq b(b+1)\frac{b^D-1}{b-1}$, and thus $r\leq \beta$ holds by Lemma \ref{dels}. It follows that $r(r-(b+1)b\frac{b^D-1}{b-1})\leq r(r-b(b+1)-\alpha\frac{b^D-b}{b-1}).$ This implies that $\alpha\leq b(b+1)$, which contradicts  the assumption.
	 Hence, $r< b(b+1)\frac{b^D-1}{b-1}$ holds.

Since $ b(b+1)\frac{b^D-1}{b-1}\geq r\geq \frac{w-\ell(\lambda,c)}{b+1}+1$ and $w=\beta-1+\alpha\frac{b^D-b}{b-1}$, by Inequality (\ref{ie1}),
	  the following inequality holds
	\begin{align*}
		b(b+1)^2\frac{b^D-1}{b-1}+\ell(\lambda,c)-b-1
		\geq w= \beta-1+\alpha\frac{b^D-b}{b-1}
		\geq \alpha\frac{(b+1)(b^D-b)}{b(b-1)}.
	\end{align*}
	It follows that
	\begin{equation}\label{5.2beta}
		\beta<(b(b+1)^2-\alpha)\frac{b^D-1}{b-1}+(b+1)^6b+(\frac{(b+1)^3((b+1)^2+1)}{2}+1)\alpha-b,
	\end{equation} and 
	\begin{equation}\label{ab}
		\alpha< b^2(b+1)+f(D,b),
	\end{equation}
	where $f(D,b):=\frac{b(b+1)^7}{2b^{D-1}-b(b+1)^4}$.
		Note that $f(D,b)$ is a monotonically decreasing function with respect to both $D$ and $b$ if $D\geq 9$ and $b\geq 2$.
		
	Now we show that $\alpha\leq b^2(b+1)$ in case of $D\geq 9$ and $2\leq b\leq 99$.
	Note that $f(9,b)<6$ in this case, verified by Wolfram Mathematica 13.3. 
	Thus, $f(D,b)\leq f(9,b)<6$
	and  $\alpha< b^2(b+1)+6$ in this case.
	By Equation (\ref{drgc}) and Proposition \ref{417} with $(i,h)=(4,4)$,
	$p^8_{44}$ is a non-negative integer depending on $\alpha$ and $b$.
	By using Wolfram Mathematica 13.3 under the conditions $\alpha< b^2(b+1)+6$, $2\leq b\leq  99$, and $D\geq 9$ to verify $p^8_{44}\in \mathbb{N}$, we have $\alpha\leq b^2(b+1)$ in this case.
	
	 To prove (ii), it is sufficient to show $\alpha<b^2(b+1)+1$ in case of $b\geq 100$ and $D\geq 9$.
	Note that $f(D,b)\leq f(D,100)<1$ in this case. It follows that $\alpha<b^2(b+1)+1$ for $b\geq 100$ and $D\geq 9$, and thus (ii) holds.
	
 
 To complete the proof, we will show that $\alpha\leq b^2(b+1)$  in case of $b\geq 100$ and $D\geq 10$.
 Note that $(b+1)f(D,b)$ is a  monotonically decreasing function with respect to both $D$ and $b$ if $D\geq 10$ and $b\geq 2$.
 Thus, $(b+1)f(D,b)\leq 101f(D,100)<1$ in this case.
 It follows that $f(D,b)<\frac{1}{b+1}$ in this case.
 This implies that $\alpha\leq   b^2(b+1)$ in this case by Inequality (\ref{ab}), as $\alpha\in\frac{1}{b+1}\mathbb{N}$.
 This completes the proof.
\end{proof}

\begin{proof}[\bf Proof of Theorem \ref{introalpha}]
	It can be directly derived from Theorem \ref{beta}.
\end{proof}

	Now, we are in the position to prove Theorem \ref{a5}.

\begin{proof}[\bf Proof of Theorem \ref{a5}]
	Let $c=3\alpha+2$ and $\tilde{c}:=\min\{c
	,6\}$. Assume that $\alpha>2$ holds. By Proposition \ref{5}, $\alpha\geq 9$ holds. Thus, $\alpha(2^D-2)\geq \max\{36c+400\tilde{c}+84, 44c-4 \}$.
	Let $G$ be a local graph of $\Gamma$. 
		It is not hard to see that $G$ has $n$ vertices with 	$n=b_0=\beta(2^D-1)$, and $G$ is  $w$-regular and $\mu$-bounded with parameter $c$, where $w=a_1=\beta-1+\alpha(2^D-2)$.
	Furthermore,
 $\go(G,q)$ is a $2$-fat \big\{\raisebox{-1ex}{\begin{tikzpicture}[scale=0.3]
			
			\tikzstyle{every node}=[draw,circle,fill=black,minimum size=10pt,scale=0.3,
			inner sep=0pt]
			
			\draw (-2.1,0) node (1f1) [label=below:$$] {};
			\draw (-1.6,0) node (1f2) [label=below:$$] {};
			\draw (-1.1,0) node (1f3) [label=below:$$] {};

			\tikzstyle{every node}=[draw,circle,fill=black,minimum size=5pt,scale=0.3,
			inner sep=0pt]

			\draw (-1.6,1) node (1s1) [label=below:$$] {};
			
			\draw (1f1) -- (1s1) -- (1f2);
			\draw (1f3) -- (1s1);
	\end{tikzpicture}},\hspace{-0.08cm}
	\raisebox{-1ex}{\begin{tikzpicture}[scale=0.3]
			\tikzstyle{every node}=[draw,circle,fill=black,minimum size=10pt,scale=0.3,
			inner sep=0pt]

			\draw (1.5,0) node (3f1) [label=below:$$] {};
			\draw (1.5,1) node (3f2) [label=below:$$] {};

			\tikzstyle{every node}=[draw,circle,fill=black,minimum size=5pt,scale=0.3,
			inner sep=0pt]

			\draw (1,0.5) node (3s1) [label=below:$$] {};
			\draw (2,0.5) node (3s2) [label=below:$$] {};
			
			\draw (3s1) -- (3s2);
			\draw (3f1) -- (3s1) -- (3f2) -- (3s2) -- (3f1);
	\end{tikzpicture}}\big\}-line Hoffman graph by Corollary \ref{incor}. 

By Lemma \ref{9}, there exists a vertex $u$ in $G$ such that $|N_G(u)\cap N_G(v)|\leq 9$ for each vertex $v$ non-adjacent to $u$. 
		Let $x\in N_G(u)$ such that $|N_G(x)\cap N_G(u)|=\max\{|N_G(v)\cap N_G(u)|\mid v\in N_G(u)\}$.
		Let $e$ be the  number of edges between $N_G(u)-\{x\}$ and $V(G)-N_G(u)-\{u\}$ in $G$.
	It follows that
	\begin{equation}\label{e1}
	e\leq 9(n-w-1)=9(\beta-\alpha)(2^D-2).
	\end{equation}

 By Proposition \ref{muassle2} (i), there exists a set of cliques $\Bo=\{C_1,\ldots,C_s\}$ in $G$ such that $u,v$ are contained in at least one clique $C_i$  for each vertex $v\in N_G(u)-\{x\}$.
By Proposition  \ref{muassle2} (iii), if there are two cliques $C,C'\in\Bo$ such that $\{u,v\}\subset C\cap C'$, then $v$ must be the vertex $x$.
 Thus, for each vertex $v\in N_G(u)-\{x\}$, $u,v$ are contained in exactly one clique $C_i\in \Bo$.
 Let $\Co(w):=\{C\in\Bo| w\in C\}$ for each vertex $w$ in $G$.
  By Proposition \ref{muassle2} (ii) and (iii), $|N_G(u)\cap N_G(v)|\leq  |C_i|-2+\sum_{A\in\Co(u)-\{C_i\}}\sum_{B\in\Co(v)-\{C_i\}}|A\cap B|\leq |C_i|-2+2\times 2\times2$ for each vertex $v\in N_G(u)-\{x\}$. By Lemma \ref{ei}, the smallest eigenvalue of $\Gamma$ is $-2^D+1$. As the degree of $\Gamma$ is $\beta(2^D-1)$, by Lemma \ref{dels}, every clique in $G$ has order at most $\beta$. Thus, $|N_G(u)\cap N_G(v)|\leq \beta+6$ for each vertex $v\in N_G(u)-\{x\}$, and
 \begin{equation}\label{e2}
 	e\geq (w-1)(w-\beta-6)=(\beta-2+\alpha(2^D-2))(\alpha(2^D-2)-7).
 \end{equation}
	
	Combining Inequalities (\ref{e1}) and (\ref{e2}), $\alpha\leq 9+\frac{7}{2^D-2}-\frac{9(\alpha(2^D-1)-2)}{\beta-2+\alpha(2^D-2)}$ holds. 
	By Theorem \ref{beta}, $\beta\leq 14(2^D-1)$ holds for $\alpha\geq 9$ and $D>9$. It follows that $\frac{7}{2^D-2}<\frac{9(\alpha(2^D-1)-2)}{\beta-2+\alpha(2^D-2)}$. This implies $\alpha<9$, which contradicts the assumption.
\end{proof}



Using a method similar to the proof for Proposition \ref{5}, we examine the possible values of $\alpha$ for $b \in [100]$ and prove Theorem \ref{alphab}.



\begin{proof}[\bf Proof of Theorem \ref{alphab}]
	Theorem \ref{beta} shows that $\alpha\in\frac{1}{b+1}\mathbb{N}$ and $\alpha\leq b^2(b+1)$.
	Furthermore, by Proposition \ref{417} and Equation (\ref{drgc}), $p^{14}_{77}$, $p^{12}_{66}$, $p^{10}_{55}$, $p^8_{44}$, and $p^6_{33}$ all are non-negative integers depending on $\alpha$ and $b$.  
	This result can be directly verified by using Wolfram Mathematica 13.3, under the conditions that $\alpha\in\frac{1}{b+1}\mathbb{N}$, $\alpha\leq b^2(b+1)$, $b\in[100]$, and $p^{14}_{77}$, $p^{12}_{66}$, $p^{10}_{55}$, $p^8_{44}$, $p^6_{33}$ all are non-negative integers only depending on $\alpha$ and $b$. 
\end{proof}

\section*{Acknowledgements}
{We are very grateful to the anonymous referee for comments and suggestions that have contributed a lot to improve the quality of our article.}
J.H. Koolen is partially supported by the National Key R. and D. Program of China (No. 2020YFA0713100), the National Natural Science Foundation of China (No. 12071454, No. 12471335), and the 
Anhui Initiative in Quantum Information Technologies (No. AHY150000). H.-J. Ge is supported by CSC scholarship program (No.  202506340038). Q. Yang is supported by the National Science Foundation of China (No. 12401460) and the Shanghai Sailing Program (No. 23YF1412500).

\bibliographystyle{plain}
\bibliography{KGLY2024}

\end{document}